\theoremstyle{plain}
\newtheorem{thm}{Theorem}[section]
\newtheorem{cor}[thm]{Corollary}
\newtheorem{prop}[thm]{Proposition}
\newtheorem{lem}[thm]{Lemma}
\theoremstyle{definition}
\newtheorem{remark}[thm]{Remark}
\newtheorem{example}[thm]{Example}
\newtheorem{question}[thm]{Question}
\newcommand{\comment}[1]{}
\newcommand{\Q}{\ensuremath{\mathbb{Q}}}
\newcommand{\R}{\ensuremath{\mathbb{R}}}
\newcommand{\Z}{\ensuremath{\mathbb{Z}}}
\newcommand{\C}{\ensuremath{\mathbb{C}}}
\DeclareMathOperator{\tb}{tb}
\def\dfn#1{{\em #1}}
\title[Contact embeddings]{Braided embeddings of contact 3--manifolds\\ in the standard contact 5--sphere}
\author{John B. Etnyre}
\address{School of Mathematics \\ Georgia Institute of Technology}
\email{etnyre@math.gatech.edu}
\urladdr{\href{http://www.math.gatech.edu/~etnyre}{http://www.math.gatech.edu/\~{}etnyre}}
\author{Ryo Furukawa}
\address{Department of Mathematics\\ The University of Tokyo} 
\email{furukawa@ms.u-tokyo.ac.jp}
\begin{document}

\begin{abstract}
In this paper we study embeddings of contact manifolds using braidings of one manifold about another. In particular we show how to embed many contact $3$--manifolds into the standard contact $5$--sphere. We also show how to obstruct braidings and branched covers of one manifold over another using contact geometry.
\end{abstract}

\maketitle

\section{Introduction}
It is a classical result of Hirsch \cite{Hirsch61} that any closed oriented 3--manifold can be embedded in $S^5$. An alternate proof of this fact is due to Hilden, Lozano and Montesinos \cite{HildenLozanoMontesinos83}, see also \cite{Hilden78}. We call their technique {\em braided embedding}. In this paper we study general braided embeddings and see how they interact with contact embeddings. In particular we can use contact geometry to obstruct certain braided embeddings and we can use braided embeddings to partially generalize Hirsch's theorem to the contact category and study contact embeddings in general. 

The work in this paper should be thought of as one of the first steps in understanding contact submanifolds of a contact manifold, which in turn should be thought of as a generalization of transverse knot theory in 3--dimensional contact geometry to higher dimensions. Recall in dimension 3, transverse knots can be used to construct contact structures in every homotopy class of plane field on all closed oriented 3--manifolds from surgery on the standard contact 3--sphere \cite{Lutz71, Martinet71} (and indeed all contact structures on these manifolds \cite{Conway14?}), study properties of contact 3--manifolds \cite{Bennequin83}, and they can distinguish all contact structures \cite{EtnyreVanHornMorris10}. It is likely that contact submanifolds will play a similar role in higher dimensions.

\subsection{Contact Embeddings}
We call $(M,\xi)$ a contact submanifold of $(W,\xi')$ if $M$ is a submanifold of $W$ that is transverse to $\xi'$ and $\xi=\xi'\cap TM$. A contact embedding of one contact manifold $(M,\xi)$ into another $(W,\xi')$ is simply an embedding $e\colon M\to W$ such that $(e(M), de(\xi))$ is a contact submanifold of $(W,\xi')$. We notice that this is a generalization of the notion of a transverse knot in a contact 3--manifold (since the contact structure on $S^1$ is simply the trivial vector space in each $T_xS^1$). While transverse knots have been extensively studied in contact 3--manifolds --- in fact, it has been shown that understanding transverse knots in a contact structure determines the contact structure \cite{EtnyreVanHornMorris10} --- there seems to be little known in higher dimensions. 

The most basic questions that can be asked are 

\begin{question}\label{D}
Given a contact $(2n+1)$--manifold $(M,\xi)$ for what $m$ does $(M,\xi)$ contact embed in the standard contact sphere $(S^{2m+1},\xi_{std})$?
\end{question}
\begin{question}\label{E}
Given an embedding of an odd dimensional manifold $M$ in $(S^{2n+1},\xi_{std})$ when can it be isotoped to be transverse to $\xi_{std}$ such that $TM\cap \xi_{std}$ is a contact structure on $M$? We will call such an embedding a {\em transverse contact embedding}.
\end{question}
\begin{question}\label{F}
Given two transverse contact embeddings of $M$ into $(S^{2n+1},\xi_{std})$ when are they isotopic?
\end{question}

Towards Question~\ref{D} the following results is the analog of the Whitney embedding theorem in the contact category.
\begin{thm}[Gromov 1986, \cite{Gromov86}\footnote{In Section~3.4.3 of \cite{Gromov86} the theorem was stated with target space being $S^{6n+3}$, but these the techniques can be improved to give the stated result as discussed in \cite[p. 140]{Torres11}}]
Any contact $(2n+1)$--manifold contact embeds in the standard contact structure on $S^{4n+3}$. 
\end{thm}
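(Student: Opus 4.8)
The plan is to deduce the theorem from Gromov's h--principle for isocontact immersions by first producing the purely homotopy--theoretic (formal) data, and then cleaning up with a general position argument; the sharp target dimension $S^{4n+3}$ will emerge precisely as the Whitney embedding threshold. Write $\xi=\ker\alpha$ for the given contact $(2n+1)$--manifold $(M,\alpha)$ and $\xi_{std}=\ker\alpha_{std}$ for the standard contact $S^{4n+3}$. Recall that an \emph{isocontact immersion} is a map $f$ that is transverse to $\xi_{std}$, satisfies $TM\cap\xi_{std}=df(\xi)$, and matches the conformal symplectic data on the contact hyperplanes; this is an open differential relation. A \emph{formal} solution is a smooth map $f\colon M\to S^{4n+3}$ together with a fibrewise injective bundle map $F\colon TM\to f^{*}TS^{4n+3}$, playing the role of a formal derivative, that carries $\xi$ symplectically (up to a positive conformal factor) into $\xi_{std}$ and sends a Reeb direction of $M$ to a vector with positive $\alpha_{std}$--value. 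Gromov's h--principle then asserts that such a formal solution deforms to a genuine isocontact immersion.

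First I would construct the formal solution, which is where the dimensions enter. Choosing compatible almost complex structures identifies $\xi_{std}$ with a rank $2n+1$ complex bundle and $\xi$ with a rank $n$ complex bundle, so the symplectic part of $F$ is a section of the bundle whose fibre is the complex Stiefel manifold $V_{n}(\C^{2n+1})$ of unitary $n$--frames in $\C^{2n+1}$. Since $V_{n}(\C^{2n+1})$ is $2((2n+1)-n)=(2n+2)$--connected while $\dim M=2n+1<2n+2$, the fibre is more highly connected than the base, so all obstructions to a global section vanish and such a section exists over any choice of underlying map $f$. The Reeb direction contributes no additional obstruction: one simply sends the Reeb field of $M$ to the Reeb field of $S^{4n+3}$, which is transverse to $\xi_{std}$, and then injectivity of $F$ follows from injectivity of its symplectic part. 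Thus a formal isocontact immersion always exists, and the h--principle upgrades it to a genuine isocontact immersion $f\colon M\to S^{4n+3}$.

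Next I would promote the immersion to an embedding. Here $\dim S^{4n+3}=4n+3=2\dim M+1$, which is exactly the Whitney threshold: a generic smooth map $M\to S^{4n+3}$ has empty double point set, since the expected dimension of self--intersections, $2\dim M-\dim S^{4n+3}=-1$, is negative. As embeddings are open and dense among smooth maps in this range, and the isocontact immersion condition is open, a sufficiently small perturbation of $f$ is simultaneously an embedding and an isocontact immersion, hence a contact embedding of $(M,\xi)$ into $(S^{4n+3},\xi_{std})$.

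The main obstacle is the h--principle input itself: establishing the flexibility of the (open) isocontact immersion relation is the substantial analytic content, and this is precisely the result we are quoting from \cite{Gromov86}. Granting it, the improvement of the target from Gromov's original $S^{6n+3}$ down to the sharp $S^{4n+3}$ is exactly the observation that one needs only an isocontact \emph{immersion} together with general position, rather than a more wasteful direct embedding argument, as explained in \cite[p.~140]{Torres11}; the numerical gain is that the immersion--to--embedding step costs only the single extra dimension $2\dim M+1$ rather than the metastable slack used in the naive approach.
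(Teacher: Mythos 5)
The paper does not actually prove this theorem; it is quoted as Gromov's result, with the footnote attributing the dimension improvement from $S^{6n+3}$ to $S^{4n+3}$ to the discussion in Torres \cite{Torres11}. Your reconstruction follows exactly the route that citation points to --- Gromov's h-principle for isocontact immersions in positive codimension, fed by an obstruction-theoretic construction of the formal data, followed by general position --- and the substance of it is correct: the fiber of the relevant bundle is the space of conformal-symplectic injections $(\xi,d\alpha)\hookrightarrow(\xi_{std},d\alpha_{std})$, which retracts to $V_n(\C^{2n+1})$, and its $(2n+2)$-connectivity kills all obstructions over the $(2n+1)$-dimensional base; the Reeb extension and the count $4n+3=2\dim M+1$ are also right.

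One step deserves more care than you give it. At the end you say the isocontact immersion condition is open, so a small perturbation is ``simultaneously an embedding and an isocontact immersion, hence a contact embedding of $(M,\xi)$.'' What is $C^1$-open is the condition that $f^*\alpha_{std}$ be \emph{a} contact form; the condition that its kernel equal the \emph{given} $\xi$ is an equality of hyperplane fields and is not open, so after perturbing you only know the embedding induces some contact structure $\xi'$ that is $C^1$-close to $\xi$. The standard repair: interpolate between the immersion and its perturbation through maps $f_t$ (all $C^1$-close, hence all inducing contact forms), apply Gray stability to the resulting path of contact structures $\ker f_t^*\alpha_{std}$ to get a diffeomorphism $\phi$ of $M$ with $\phi^*\xi'=\xi$, and replace the perturbed embedding $\tilde f$ by $\tilde f\circ\phi$. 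With that sentence added, your argument is a complete and faithful account of the proof the paper is citing.
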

\begin{remark}
A more explicit embedding of contact 3--manifolds in $(S^{7},\xi_{std})$ was given by Mori \cite{Mori04} using open books and this proof was generalized by Torres in \cite{Torres11}.
\end{remark}
\begin{remark}\label{highercodim}
In \cite{EliashbergMishachev02} Eliashberg and Mishachev showed that given a closed contact manifold $(M,\xi)$ and a contact manifold $(W,\xi')$ of dimensions $m$ and $n$, respectively, then an embedding $e\colon M\to W$ can be isotoped to a contact embedding if (1) $n\geq m+4$ and (2)  $de\colon TM\to TW$ can homotoped through bundle injections to a bundle map $F\colon TM\to TW$ for which $F^{-1}(\xi')=\xi$ and $F$ gives a conformally symplectic map from $\xi$ to $\xi'$ (with respect to the conformal symplectic structures induced from contact forms). Notice that this implies the answer to Question~\ref{E} reduces to bundle theory, in co-dimension 4 or larger. Similarly Question~\ref{D} (resp.~Question~\ref{F}) reduces to the question of the existence (resp.~the isotopy) of smooth embeddings and a bundle theory question when the co-dimension is at least 4. Below we will primarily be interested in the co-dimension 2 case to which the Eliashberg-Mishachev $h$-principle does not apply. 
\end{remark}

Though some of our results hold in all dimensions, we now primarily focus on the case of embedding contact 3--manifolds.  Given Hirsch's result mentioned above that any closed oriented 3-manifold embeds in $S^5$ one might ask if the above theorem can be improved. In general the answer is no.
\begin{thm}[Kasuya 2016, \cite{Kasuya16}]\label{thm:obstruct}
If $(M,\xi)$ is a co-dimension 2 contact embedding into a co-oriented contact manifold $(W,\xi')$ and $H^2(W;\Z)=0$, then $c_1(\xi)=0$. 
\end{thm}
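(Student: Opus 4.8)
The plan is to compare Chern classes along the submanifold using the symplectic decomposition of $\xi'$ and then to invoke $H^2(W;\Z)=0$ twice. Fix a contact form $\alpha$ for $\xi'$, so that $(\xi',d\alpha)$ is a symplectic vector bundle and $c_1(\xi')\in H^2(W;\Z)$ is defined. Write $i\colon M\hookrightarrow W$ for the inclusion and let $\nu$ be the oriented rank--$2$ normal bundle of $M$. Since $M$ is a contact submanifold, $\xi=\xi'\cap TM$ is a symplectic subbundle of $\xi'|_M$, so it has a symplectic orthogonal complement $\xi^{\perp}$ (the conformal symplectic normal bundle), giving a splitting of symplectic vector bundles
\[
\xi'|_M \;\cong\; \xi \oplus \xi^{\perp}.
\]

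First I would identify $\xi^{\perp}$ with the geometric normal bundle $\nu$. The projection $TW|_M \to TW|_M/TM = \nu$ restricted to $\xi'|_M$ has kernel exactly $\xi'|_M\cap TM = \xi$, and a dimension count ($\operatorname{rk}\xi' - \operatorname{rk}\xi = 2 = \operatorname{rk}\nu$) shows that the induced map $\xi'|_M/\xi \to \nu$ is an isomorphism. Since $\xi^{\perp}\cong \xi'|_M/\xi$, this identifies $\xi^{\perp}$ with $\nu$ as oriented rank--$2$ bundles, whence $c_1(\xi^{\perp}) = e(\nu)$ for a suitable orientation of $\nu$. Additivity of $c_1$ over the symplectic splitting then gives
\[
i^*c_1(\xi') = c_1(\xi'|_M) = c_1(\xi) + e(\nu),
\]
so that $c_1(\xi) = i^*c_1(\xi') - e(\nu)$.

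It then remains to see that both terms on the right vanish. Since $c_1(\xi')\in H^2(W;\Z)=0$, the first term $i^*c_1(\xi')$ is zero. For the second, I would use that the Euler class of the oriented normal bundle of a closed codimension--$2$ submanifold is the restriction of a global class: via the Thom isomorphism and excision, the Thom class of $\nu$ determines a class $\eta\in H^2(W;\Z)$ (the Poincaré dual $\PD[M]$ when $W$ is closed) satisfying $i^*\eta = e(\nu)$. Because $H^2(W;\Z)=0$ forces $\eta=0$, we conclude $e(\nu)=0$, and hence $c_1(\xi)=0$.

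The geometric heart of the argument is the identification $\xi^{\perp}\cong\nu$ as complex line bundles, which is precisely where the contact hypothesis is used and which underlies the clean additivity $c_1(\xi'|_M)=c_1(\xi)+e(\nu)$; once this is in place, the vanishing is formal, with $H^2(W;\Z)=0$ used once to kill $i^*c_1(\xi')$ and once to kill the normal Euler class. The only points requiring care are checking that the splitting $\xi'|_M\cong\xi\oplus\xi^{\perp}$ is compatible with the compatible complex structures so that Chern classes genuinely add, and confirming that $M$ being a closed (or properly embedded) submanifold suffices to run the Thom/excision argument producing $\eta$.
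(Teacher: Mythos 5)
Your proof is correct. Note that the paper does not prove this statement itself---it is quoted from Kasuya's paper \cite{Kasuya16}---so there is no internal argument to compare against; your route (the symplectic splitting $\xi'|_M\cong\xi\oplus\xi^{\perp}$, the identification of $\xi^{\perp}$ with the normal bundle $\nu$ via transversality, and the two applications of $H^2(W;\Z)=0$ to kill $i^*c_1(\xi')$ and $e(\nu)$) is the standard one and is essentially Kasuya's original argument. The only hypothesis worth making explicit is that $M$ is closed (as it is throughout the paper), since compactness of $M$ is what licenses the excision step producing a global class $\eta\in H^2(W;\Z)$ with $i^*\eta=e(\nu)$; with that stated, every step is sound, and the orientation ambiguity you flag in $\xi^{\perp}\cong\nu$ is harmless because the vanishing of $e(\nu)$ is independent of the choice of orientation.
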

Since it is well known there are many contact 3--manifolds with non-vanishing first Chern class it is clear they cannot embed into the standard contact structure on $S^5$. This brings up two natural questions.
\begin{question}\label{G}
Given a 3--manifold $M$, does a contact structure $\xi$ on $M$ contact embed in $(S^5,\xi_{std})$ if and only if $c_1(\xi)=0$?
\end{question} 
\begin{question}\label{H}
Is there any contact 5--manifold into which all contact 3--manifolds contact embed?
\end{question} 
Below we will comment on Question~\ref{H} but we now discuss several results that can be proven using braided embeddings techniques (discussed below) that point to a positive answer to  Question~\ref{G}. 
The first concerns contact structures on $S^3$. 
\begin{thm}\label{embeds3s}
Any contact structure on $S^3$ can be embedded in $(S^5, \xi_{std})$ so that it is isotopic to the standard embedding. Moreover there are infinitely many isotopy classes of embeddings of $S^3$ into $S^5$ so that any contact structure on $S^3$ can be realized by a contact embedding in each of these isotopy classes. 
\end{thm}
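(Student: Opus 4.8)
The plan is to separate the tight and overtwisted contact structures on $S^3$ and to use Eliashberg's classification of overtwisted structures to reduce the realization problem to a homotopy-theoretic count. Since $H^2(S^3;\Z)=0$ we automatically have $c_1(\xi)=0$, so Theorem~\ref{thm:obstruct} presents no obstruction to any of these embeddings. For the unique tight structure I would use the standard contact embedding $S^3=S^5\cap(\C^2\times\{0\})$, which realizes $(S^3,\xi_{std})$ as a contact submanifold whose image is the unknotted $S^3$. Every remaining contact structure on $S^3$ is overtwisted and, by Eliashberg, is determined up to isotopy by its homotopy class of plane fields, equivalently by the invariant $d_3\in\Z+\tfrac12$ (normalized so that $\xi_{std}$ sits at $-\tfrac12$). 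Thus the first assertion reduces to realizing, for each value of $d_3$, a contact embedding whose underlying smooth embedding is isotopic to the standard one.

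To realize the overtwisted structures I would start from the standard embedding and modify the braiding in a neighborhood of a transverse unknot. Recall that the overtwisted structures on $S^3$ are obtained from $\xi_{std}$ by iterated full Lutz twists along transverse unknots \cite{Lutz71,Martinet71}, and that such twists realize every value of $d_3\in\Z+\tfrac12$. The idea is to perform the Lutz twist \emph{inside} $(S^5,\xi_{std})$: in a Darboux neighborhood of the unknot the braided embedding meets a model $S^3\times D^2$, and I would re-braid the solid-torus neighborhood of the unknot by spiraling it in the $D^2$ direction so that the induced field of complex tangencies $\xi_{std}\cap TS^3$ acquires exactly the extra rotation prescribed by the Lutz twist. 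Because the modification is supported in a ball and merely spins a standardly embedded solid torus within that ball, the resulting submanifold is smoothly isotopic rel boundary to the original; globally the new embedding is therefore smoothly isotopic to the standard $S^3\subset S^5$, while the induced contact structure is the Lutz-twisted one. Iterating realizes every $d_3$, completing the first half of the theorem.

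For the second assertion I would produce infinitely many smoothly distinct braided embeddings of $S^3$ and then re-run the realization above inside each. Using the open book on $(S^5,\xi_{std})$ with page $D^4$ and binding $(S^3,\xi_{std})$, I would braid $S^3$ about the binding as a branched cover $S^3\to S^3$ with varying branching and braiding data; a suitable family of such braidings yields knotted copies $e_k\colon S^3\hookrightarrow S^5$ whose complements are distinguished by their fundamental groups or Alexander invariants, giving infinitely many isotopy classes. Since the Lutz modification of the previous paragraph is supported in a small ball that can be chosen disjoint from the branch locus, it can be carried out inside any $e_k$ without altering its smooth isotopy type, so each $e_k$ realizes every value of $d_3$, hence every contact structure on $S^3$.

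The main obstacle is the simultaneous control of the smooth and contact data. On the contact side I must verify that the spiraling re-braiding genuinely produces the Lutz-twisted structure, which requires an explicit computation in the model $S^3\times D^2\subset S^5$ showing that the induced complex tangencies acquire the correct $d_3$; on the smooth side I must confirm that this local spinning is smoothly trivial rel boundary, so that the global embedding does not leave its isotopy class. For the infinitely many classes the crux is exhibiting an invariant, such as the Alexander module of the $3$-knot complement, that takes infinitely many values on the family $e_k$, together with the observation that the Lutz modification is insensitive to it. I expect the bookkeeping relating the braiding data to $d_3$ and to the smooth knot type to be the most delicate part of the argument.
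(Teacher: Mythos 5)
There is a genuine gap, in fact two. The first is homotopy-theoretic: you assert that iterated \emph{full} Lutz twists along transverse unknots realize every value of $d_3$. This is false --- a full Lutz twist never changes the homotopy class of the underlying plane field (that is precisely why it is used to produce overtwisted structures homotopic to a given one), so iterating full twists only ever yields the overtwisted structure homotopic to $\xi_{std}$. What you would need are half ($\pi$) Lutz twists, and there you meet the asymmetry that is the actual content of the theorem: transverse unknots in the tight $(S^3,\xi_{std})$ have self-linking number in $\{-1,-3,-5,\dots\}$ by the Bennequin inequality, and the modifications available along them shift $d_3$ in one direction only (compare Proposition~\ref{stabilize}, where each transverse stabilization of the branch locus changes the covering contact structure by connected sum with $\xi_1$, i.e.\ $d_3\mapsto d_3+1$). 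As the remark following Theorem~\ref{embeds3s} notes, embedding $\xi_n$ for $n\geq 1$ already follows from Mori's work \cite{Mori12} and Lemma~\ref{cconnectsum}; the real content is $\xi_n$ for $n\leq 0$, and your mechanism has no access to those classes. The paper reaches them by taking $2$--fold branched covers over transverse unknots $T'_n$ with \emph{overtwisted complement} inside the already-embedded overtwisted $(S^3,\xi_1)$: such unknots exist with every self-linking number precisely because Bennequin fails in overtwisted structures, and this is what produces $d_3(\eta_n)=d_3(\eta_0)+n$ for all $n\in\Z$.

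The second gap is the central geometric step itself: you assert, without a construction, that the solid-torus neighborhood of a transverse unknot can be ``re-braided by spiraling in the $D^2$ direction'' so that the induced contact structure acquires a Lutz twist while the sphere stays transverse to $\xi_{std}$. In the model the relevant neighborhood is $(S^1\times D^2)\times D^2$ with contact form $d\phi+\rho^2\,d\vartheta+r^2\,d\theta$, and a graph-type spiraling $(\phi,\rho,\vartheta)\mapsto\bigl(\phi,\rho,\vartheta,g(\rho),m\phi+n\vartheta\bigr)$ pulls this back to $(1+mg^2)\,d\phi+(\rho^2+ng^2)\,d\vartheta$. A Lutz twist requires the $d\phi$-coefficient to become negative near the new core; taking $n=0$, $g(0)\neq 0$, $m\leq -2$ achieves this, but then
\[
h_1h_2'-h_1'h_2=2\rho\bigl(1+mg^2-m\rho g g'\bigr)<0 \quad\text{near }\rho=0,
\]
so the induced form is not a positive contact form --- the embedding fails to be a transverse contact embedding exactly where you force the twist --- while if $n\neq 0$, smoothness at the core forces $g(0)=0$ and no sign change occurs there at all. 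So the naive spiraling provably does not work, and something genuinely nontrivial is required: that \emph{some} deformation of the standard $S^3\subset S^5$ induces $\xi_1$ is exactly Mori's theorem, and no analogue is known for $n\leq 0$; in effect your key step assumes the hard part of the statement. The paper's route avoids this entirely: Lemma~\ref{isotop} shows the braided embedding associated to a cyclic branched cover over the unknot is smoothly isotopic to the standard embedding, and Theorem~\ref{MainContactBraid} (via Corollary~\ref{contctforcyclic}) upgrades those branched covers to contact embeddings. Your sketch for infinitely many isotopy classes is close in spirit to the paper's Example~\ref{differentembeddings} combined with Lemma~\ref{cconnectsum}, but as written it inherits the same unproved local modification.
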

\begin{remark}
It is obvious that the standard tight contact structure on $S^3$ embeds. In \cite{Mori12}, Mori showed that the overtwisted contact structure $\xi_1$ (see Section~\ref{htpyclasses} for notation) embeds so that it is smoothly isotopic to the standard embedding and hence, using connected sums as in Lemma~\ref{cconnectsum}, it is clear one can embed $\xi_n$ for all $n\geq 1$.  So the real content of the theorem is to embed the $\xi_n$ for $n\leq 0$ and to control the isotopy class of the embeddings. 
\end{remark}

Using the relative contact connected sum lemma, Lemma~\ref{cconnectsum}, we have the immediate corollary. 

\begin{cor}\label{allotifno2}
In every smooth isotopy class of contact embedding of $(S^3,\xi_{std})$ into $(S^5,\xi_{std})$ there is also an embedding of every overtwisted contact structure on $S^3$. \hfill \qed
\end{cor}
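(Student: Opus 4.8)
The plan is to produce, inside any prescribed smooth isotopy class, each overtwisted structure by connect-summing a given contact embedding of $(S^3,\xi_{std})$ with the embeddings supplied by Theorem~\ref{embeds3s}. Fix a smooth isotopy class $E$ of embeddings $S^3\hookrightarrow S^5$ that contains a contact embedding $e\colon (S^3,\xi_{std})\to (S^5,\xi_{std})$. Let $\xi$ be an arbitrary overtwisted contact structure on $S^3$; by Eliashberg's classification $\xi=\xi_n$ for some $n$. Theorem~\ref{embeds3s} provides a contact embedding $f\colon (S^3,\xi_n)\to(S^5,\xi_{std})$ lying in the \emph{standard} smooth isotopy class, which I denote $E_0$.

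Next I would apply the relative contact connected sum, Lemma~\ref{cconnectsum}, to $e$ and $f$. This yields a contact embedding of $(S^3\# S^3,\,\xi_{std}\#\xi_n)$ into $(S^5,\xi_{std})$. Two facts then identify the output. On the contact side, $\xi_{std}\#\xi_n$ is again overtwisted and lies in the same homotopy class of plane fields as $\xi_n$: the Gompf invariant is additive up to the constant $\tfrac12$ and $d_3(\xi_{std})=-\tfrac12$, so $d_3(\xi_{std}\#\xi_n)=d_3(\xi_n)$, and Eliashberg's uniqueness of overtwisted structures in a fixed homotopy class then gives $\xi_{std}\#\xi_n\cong\xi_n$. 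On the smooth side, $S^3\# S^3=S^3$ and the connected sum of $e$ with the unknotted embedding $f$ returns the isotopy class $E\# E_0=E$, because connect-summing with the standard embedding is the identity operation on isotopy classes of embeddings of $S^3$ in $S^5$.

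Combining these, the connected sum of $e$ and $f$ is a contact embedding of $(S^3,\xi_n)$ whose smooth isotopy class is $E$. Since $n$, equivalently the overtwisted structure $\xi$, was arbitrary, every overtwisted contact structure on $S^3$ admits a contact embedding in the class $E$, which is exactly the assertion.

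The only real content beyond bookkeeping is guaranteeing that the two normalizations agree, namely that the relative contact connected sum of Lemma~\ref{cconnectsum} simultaneously realizes the contact connected sum $\xi_{std}\#\xi_n$ on the source and the smooth connected sum $E\# E_0$ on the embeddings. Granting the precise form of Lemma~\ref{cconnectsum}, both reductions $\xi_{std}\#\xi_n\cong\xi_n$ and $E\#E_0=E$ are standard, so the corollary is indeed immediate; the one point I would verify carefully is that the local model used in Lemma~\ref{cconnectsum} does not disturb the ambient isotopy class, which is where I expect any subtlety to live.
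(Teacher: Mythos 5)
Your proposal is correct and is essentially the paper's own (unwritten) argument: the corollary is stated as an immediate consequence of Lemma~\ref{cconnectsum} applied to the given contact embedding of $(S^3,\xi_{std})$ and the smoothly standard contact embeddings of the $\xi_n$ supplied by Theorem~\ref{embeds3s}, with exactly the two identifications $\xi_{std}\#\xi_n\cong\xi_n$ and $E\#E_0=E$ that you spell out. (Only note that your $d_3$ bookkeeping uses Gompf's original normalization, $d_3(\xi_{std})=-\tfrac12$ with additivity up to $\tfrac12$, whereas the paper shifts the definition so that $d_3(\xi_{std})=0$ and $d_3$ is strictly additive; either convention gives $d_3(\xi_{std}\#\xi_n)=d_3(\xi_n)$, and on $S^3$ this suffices since the homotopy class over the $2$--skeleton is automatically unique.)
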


\begin{question}
Is there a smooth isotopy class of embedding of $S^3$ in $S^5$ that does not contain an embedding of $(S^3,\xi_{std})$ into $(S^5,\xi_{std})$?
\end{question}

Theorem~\ref{embeds3s} also allows us to show the following result. 
\begin{thm}\label{allhaveembed}
Let $M$ be a 3--manifold with no 2--torsion in its first homology group. Then an overtwisted contact structure $\xi$ on $M$ embeds in $(S^5, \xi_{std})$ if and only if $c_1(\xi)=0$. 
\end{thm}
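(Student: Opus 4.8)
The plan is to establish both implications, the forward one being immediate and the reverse one requiring the braided-embedding machinery together with Eliashberg's classification. For necessity, if $\xi$ contact embeds in $(S^5,\xi_{std})$ then, because $H^2(S^5;\Z)=0$, Theorem~\ref{thm:obstruct} forces $c_1(\xi)=0$, and this direction uses no hypothesis on $M$. So assume conversely that $\xi$ is overtwisted with $c_1(\xi)=0$. By Eliashberg's classification of overtwisted contact structures in dimension three, $\xi$ is determined up to isotopy by its homotopy class as a co-oriented $2$--plane field, so it suffices to realize that homotopy class by an overtwisted contact structure that contact embeds. I would organize the homotopy classes of plane fields on $M$ by a two-dimensional invariant, the induced $\Spin^c$ structure (whose first Chern class is the Euler class $e(\xi)=c_1(\xi)\in H^2(M;\Z)$), together with a three-dimensional invariant $d_3$; once the $\Spin^c$ structure is fixed and $c_1(\xi)$ is torsion, the classes lying over it form an affine $\Z$--set.

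The hypothesis enters through the $\Spin^c$ bookkeeping. The set $\Spin^c(M)$ is a torsor over $H^2(M;\Z)\cong H_1(M;\Z)$ with $c_1(\fraks+a)=c_1(\fraks)+2a$, so the structures with $c_1=0$ form a torsor over the $2$--torsion of $H^2(M;\Z)$. Every closed orientable $3$--manifold is spin, so at least one such structure exists, and the absence of $2$--torsion in $H_1(M;\Z)$ makes it unique; call it $\fraks_0$. In particular any contact structure on $M$ with $c_1=0$ — and by the necessity direction this includes every contact structure that contact embeds — induces exactly $\fraks_0$.

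To build the embedding I would first embed $M$ smoothly in $S^5$ (Hirsch) and upgrade this, via the braided-embedding construction that is the main technique of the paper, to a contact embedding of $(M,\xi_0)$ for some contact structure $\xi_0$. By the previous paragraph $c_1(\xi_0)=0$ and $\xi_0$ induces $\fraks_0$, so $\xi_0$ and $\xi$ agree in their two-dimensional data and differ only in $d_3$. Using the embeddings of $(S^3,\xi_n)$ from Theorem~\ref{embeds3s} and the relative contact connected sum of Lemma~\ref{cconnectsum}, I would form contact embeddings of $(M,\xi_0)\#(S^3,\xi_n)$; since connected sum with $S^3$ changes neither $M$ nor its $\Spin^c$ structure, each such sum still induces $\fraks_0$, while $d_3$ is additive under connected sum. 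As $n$ ranges over $\Z$ the invariant $d_3(\xi_0\#\xi_n)$ should sweep out the entire affine $\Z$--set of classes over $\fraks_0$, so I can choose $n$ with $d_3(\xi_0\#\xi_n)=d_3(\xi)$; for such $n$ the sum is overtwisted and shares the homotopy class of $\xi$, hence is isotopic to $\xi$ by Eliashberg, realizing $\xi$ as a contact embedding.

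The main obstacle is the base step: the codimension-two $h$--principle fails (Remark~\ref{highercodim}), so upgrading Hirsch's smooth embedding to a contact embedding realizing some contact structure on $M$ is exactly the point at which the braided-embedding machinery must be invoked. Secondarily, one must verify carefully that connected sum with $(S^3,\xi_n)$ shifts $d_3$ by all integers while fixing $\fraks_0$, so that the full fiber of homotopy classes over $\fraks_0$ is attained; this is where the control over the infinitely many isotopy classes in Theorem~\ref{embeds3s} and the additivity of $d_3$ do the essential work.
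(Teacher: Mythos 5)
Your proposal is correct and follows essentially the same route as the paper: necessity via Theorem~\ref{thm:obstruct}, a base contact embedding of some contact structure on $M$ via the braided-embedding machinery (this is the paper's Proposition~\ref{oneembed}, built on Theorem~\ref{HLM} and Theorem~\ref{MainContactBraid}), then connected sums with the embedded $(S^3,\xi_n)$ of Theorem~\ref{embeds3s} via Lemma~\ref{cconnectsum}, and finally Gompf's classification of plane fields plus Eliashberg's Theorem~\ref{otclass} to conclude that, absent $2$--torsion, these sums exhaust all overtwisted structures with $c_1=0$. Your $\Spin^c$ bookkeeping is just a rephrasing of the paper's $\Gamma$--invariant argument, so the two proofs coincide in substance.
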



For some 3--manifolds we can do better and completely answer Question~\ref{G}.  

\begin{thm}\label{embedall}
Let $M$ be one of the following manifolds
\begin{enumerate}
\item a lens space $L(p,q)$ (this includes $S^3$) with $p$ odd or with $p$ even and $q=1$ or $q=p-1$,
\item $S^1\times S^2$, or
\item $T^3$.
\end{enumerate}
A contact structure $\xi$ on $M$ can be embedded in $(S^5, \xi_{std})$ if and only if its first Chern class is zero, $c_1(\xi)=0$. 
\end{thm}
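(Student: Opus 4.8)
The plan is to treat the two implications separately. For the forward (``only if'') direction there is nothing to construct: since $H^2(S^5;\Z)=0$, Kasuya's Theorem~\ref{thm:obstruct} applies verbatim to any codimension--$2$ contact embedding into $(S^5,\xi_{std})$ and forces $c_1(\xi)=0$. So I would dispose of this direction in one line and concentrate on the converse.

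For the converse, fix $\xi$ with $c_1(\xi)=0$ and split according to whether $\xi$ is overtwisted or tight. First I would record which of the listed manifolds carry $2$--torsion in homology: $H_1(L(p,q))=\Z/p\Z$ has $2$--torsion exactly when $p$ is even, while $H_1(S^1\times S^2)=\Z$ and $H_1(T^3)=\Z^3$ are torsion free. Consequently, for the lens spaces with $p$ odd, for $S^1\times S^2$, and for $T^3$, every \emph{overtwisted} $\xi$ with $c_1(\xi)=0$ already embeds by Theorem~\ref{allhaveembed}. This leaves two families of cases to handle directly: (a) the \emph{tight} contact structures with $c_1=0$ on every manifold in the list, and (b) the \emph{overtwisted} contact structures with $c_1=0$ on the even lens spaces $L(p,1)$ and $L(p,p-1)$.

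The engine for both remaining families is the braided--embedding machinery, applied through the realizations of these manifolds as branched covers of $S^3$: a lens space as the double branched cover over a $2$--bridge link, $S^1\times S^2$ as the double branched cover over a $2$--component unlink, and $T^3$ through a suitable branched--cover or fibered presentation. I would aim to produce, for each $\Spin^c$ structure $\fraks$ on $M$ with $c_1(\fraks)=0$, at least one explicit braided embedding of a contact structure inducing $\fraks$; the count of such $\fraks$ is the order of the $2$--torsion subgroup of $H^2(M;\Z)$, which is $1$ except on the even lens spaces, where it is $2$ (the classes $0$ and $p/2$ in $\Z/p\Z$). The hypothesis $q=1$ or $q=p-1$ when $p$ is even is what makes these covers symmetric enough for the braiding to be arranged transverse to $\xi_{std}$ and to reach \emph{both} of these two $\Spin^c$ classes. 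Granting a base embedding in each class, I would then connect on overtwisted summands using the relative contact connected sum of Lemma~\ref{cconnectsum}: by Theorem~\ref{embeds3s} every contact structure on $S^3$ embeds, connected sum fixes $\fraks$ while shifting the $d_3$ invariant through every integer, and hence every plane--field homotopy class with $c_1=0$ in that $\Spin^c$ class is realized. By Eliashberg's classification this produces every overtwisted structure with $c_1=0$, completing (b); the tight structures of (a)---finitely many on the lens spaces and $S^1\times S^2$, and the $\Z$--family $\xi_n$ on $T^3$---must instead come directly from the braided constructions, since connected sum with an overtwisted summand can never yield a tight structure.

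The main obstacle I expect is exactly this explicit braided step: arranging each branched cover so that the induced plane field is honestly contact and transverse to $\xi_{std}$, and bookkeeping which $\Spin^c$ structure and which $d_3$ it realizes. In particular, reaching the second $c_1=0$ class on the even lens spaces is where the $2$--torsion defeats the naive connected--sum argument behind Theorem~\ref{allhaveembed} and where the restriction $q\in\{1,p-1\}$ is essential; and giving a single construction covering all $\xi_n$ on $T^3$ at once is the other delicate point. Verifying that the base embeddings together with the integer $d_3$--shifts exhaust all homotopy classes of plane field with $c_1=0$ is then routine.
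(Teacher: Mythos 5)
Your high-level architecture matches the paper's: necessity via Theorem~\ref{thm:obstruct}, the overtwisted cases on torsion-free manifolds via Theorem~\ref{allhaveembed}, and the remaining cases via branched-cover/braiding constructions plus relative connected sums (Lemma~\ref{cconnectsum}) to sweep out $d_3$. The correct identification of the two residual families --- tight structures everywhere, and the second $\Gamma$-class of overtwisted structures on even lens spaces --- is also right, as is the count of classes via the $2$--torsion subgroup. However, the proposal defers exactly the steps that constitute the actual proof, and in one case points at a mechanism that is not known to work. First, for the tight structures on lens spaces you need what the paper proves as Lemma~\ref{alltight}: a classification argument (using Honda--Giroux, Lisca--Mati\'c, and Gompf's \cite[Corollary~4.10]{Gompf98}) showing the only tight structure with $c_1=0$ is the one from Legendrian surgery on a chain of rotation-number-zero unknots, followed by an explicit open book for that structure admitting a page involution exhibiting it as a $2$--fold branched cover of $(S^3,\xi_{std})$ over a transverse link, so that Corollary~\ref{contctforcyclic} applies. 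Saying ``lens spaces are double branched covers over $2$--bridge links'' is the right topological starting point, but the content is matching the \emph{contact} structure induced by a \emph{transverse} representative of the branch locus to the tight $c_1=0$ structure; nothing in your outline does this.

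Second, your plan to handle $T^3$ ``through a suitable branched-cover or fibered presentation'' over $S^3$ is not what the paper does and is unsupported: the paper never realizes any tight $\xi^{T^3}_n$ as a contact branched cover of $(S^3,\xi_{std})$. Instead it embeds $(T^3,\xi^{T^3}_1)$ directly as the unit cotangent bundle of $T^2$ inside the jet-space neighborhood of a Legendrian torus in $(S^5,\xi_{std})$, and then obtains every $\xi^{T^3}_n$ as an \emph{unbranched} $n$--fold cyclic cover braided about $T^3$ itself (Theorem~\ref{cyclicbraid} and Theorem~\ref{MainContactBraid} with $Y=T^3$, not $Y=S^3$). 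Third, for the even lens spaces the paper must construct a second embeddable overtwisted structure $\xi'$ with the other $\Gamma$ invariant: it is the structure supported by the annular open book whose monodromy is the $p$-th power of a left-handed Dehn twist (equivalently, $(+1)$--contact surgery on $p+1$ standard Legendrian unknots), which embeds as a $2$--fold cyclic branched cover; and the verification that $\Gamma_{\xi'}\neq\Gamma_{\xi}$ is itself delicate, since Equation~\eqref{gammainvt} is only established for Legendrian surgeries and the paper has to route the computation through an orientation-reversing diffeomorphism comparing $(\Sigma,\phi)$ with $(\Sigma,\phi^{-1})$. These three constructions are the theorem; flagging them as ``the main obstacle'' leaves the proof essentially unstarted in its hard cases.
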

\begin{remark}
The above theorem can be extended to all $L(p,q)$ with $p<10$, but as the proofs in the cases not mentioned in the theorem are {\em ad hoc} but similar to the ones used in the theorem we do not include them here. We expect the same techniques to extend to all $L(p,q)$. We also note that in Lemma~\ref{alltight} we show that the theorem is true for tight contact structures on all lens spaces, so surprisingly the difficultly in proving the theorem is embedding all overtwisted contact structures when there is 2--torsion in the first homology. 
\end{remark}
\begin{remark}
Several of the embeddablity results in this theorem were previously known to Mori. Specifically, he observed that tight contact structures on $T^3$ embed based on the methods in \cite{Mori12}. Also, using open book embeddings, Mori announced that the double branched covers of $(S^3,\xi_{std})$ can be embedded in $(S^5, \xi_{std})$ in a 2012 talk. From this some of the results about embedding contact structures on lens spaces easily follow, though getting the full statements about lens spaces in the theorem takes considerably more work. 
\end{remark}

We notice a slight modification of Question~\ref{G} does have a positive answer. Recall in \cite{BormanEliashbergMurphy15} Borman, Eliashberg and Murphy gave a definition of {\em overtwisted} contact structures in all dimensions and showed there was a unique, up to isotopy, overtwisted contact structure $\xi_{ot}$ on $S^5$. 
\begin{thm}\label{embedinOT}
A contact 3--manifold $(M,\xi)$ contact embeds in $(S^5,\xi_{ot})$ if and only if $c_1(\xi)=0$.
\end{thm}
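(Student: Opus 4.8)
The plan is to prove both implications. The forward direction follows immediately from the obstruction already in hand, while the reverse direction exploits the flexibility of overtwisted contact structures.

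For the ``only if'' direction, suppose $e\colon (M,\xi)\to (S^5,\xi_{ot})$ is a contact embedding. Since $\xi_{ot}$ is co-oriented (being produced from an almost contact structure in the sense of \cite{BormanEliashbergMurphy15}), this is a co-dimension $2$ contact embedding into a co-oriented contact manifold with $H^2(S^5;\Z)=0$. Theorem~\ref{thm:obstruct} then gives $c_1(\xi)=0$ at once.

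For the ``if'' direction, assume $c_1(\xi)=0$. First I would fix a smooth embedding $e\colon M\hookrightarrow S^5$ whose normal bundle $\nu$ is trivial. By Hirsch's theorem $M$ embeds in $S^5$, and since $TM$ is trivial the bundle $\nu$ is stably trivial with $w_2(\nu)=0$; after adjusting the embedding (or using a braided embedding as in the body of the paper, which comes with a natural normal framing) one may take $\nu$ trivial. The relevant point is that any co-dimension $2$ contact embedding must satisfy $c_1(\xi)+c_1(\nu)=c_1(\xi_{ot})|_M=0$, so the requirement $c_1(\nu)=0$ is exactly compatible with $c_1(\xi)=0$. Identifying a tubular neighborhood $N$ with $M\times D^2$ and choosing a contact form $\alpha$ for $\xi$, the form $\alpha+r^2\,d\theta$ (in polar coordinates on $D^2$) is a contact form on $N$ making $M\times\{0\}$ a contact submanifold inducing exactly $\xi$; call the resulting contact structure $\eta_0$.

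Next I would extend $\eta_0$ to an overtwisted contact structure on all of $S^5$ using the existence $h$--principle of Borman--Eliashberg--Murphy \cite{BormanEliashbergMurphy15}. Two things must be checked. First, the almost contact structure underlying $\eta_0$ on $N$ must extend to an almost contact structure on $S^5$; this is a routine obstruction-theoretic computation over the cells of $S^5$ not contained in $N$, in which the only primary obstruction is measured by $c_1$ and vanishes precisely because $c_1(\xi)=0$ and $H^2(S^5;\Z)=0$. Second, to invoke the relative form of the $h$--principle I would first insert a standard overtwisted region into the (nonempty, open) complement of $N$, enlarging the set on which the structure is already genuine and overtwisted; the $h$--principle then produces a genuine overtwisted contact structure $\eta$ on $S^5$ agreeing with $\eta_0$ near $M$, so that $(M,\xi)$ is a contact submanifold of $(S^5,\eta)$. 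Finally, by the uniqueness up to isotopy of the overtwisted contact structure on $S^5$ recalled above, $\eta$ is isotopic to $\xi_{ot}$, and carrying $e(M)$ along this isotopy realizes $(M,\xi)$ as a contact submanifold of $(S^5,\xi_{ot})$. The main obstacle is the formal step: producing a smoothly embedded $M$ with trivial normal bundle and verifying that the almost contact data extends over $S^5$. Once this formal data is in place, the genuine geometric realization is automatic from the overtwisted $h$--principle, which is exactly why embedding into $(S^5,\xi_{ot})$ is so much easier than into $(S^5,\xi_{std})$.
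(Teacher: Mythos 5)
Your proof is correct and follows essentially the same route as the paper: the obstruction direction via Theorem~\ref{thm:obstruct}, and the existence direction by placing $\ker(\alpha+r^2\,d\theta)$ on a tubular neighborhood $M\times D^2$, extending it as an almost contact structure over $S^5$ (the obstruction-theoretic step the paper delegates to Kasuya \cite{Kasuya16} together with $\pi_4(SO(5)/U(2))=0$), applying the relative $h$-principle of \cite{BormanEliashbergMurphy15}, and invoking uniqueness of $\xi_{ot}$ plus Gray stability. The one imprecision is your justification of normal-bundle triviality: stable triviality plus $w_2(\nu)=0$ does not by itself force an oriented rank-2 bundle over a 3--manifold to be trivial (the Euler class need only be even); triviality is instead automatic because $e(\nu)$ is the restriction to $M$ of the Poincar\'e dual of $[M]$ in $S^5$, which vanishes since $H^2(S^5;\Z)=0$ --- or one can fall back on your braided-embedding alternative, which carries a natural framing.
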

\begin{remark} This result is a direct corollary of \cite{BormanEliashbergMurphy15} and \cite{Kasuya16}. 
In \cite{Kasuya16}, Kasuya showed that if $(M,\xi)$ is a contact 3--manifold with vanishing first Chern class then it embeds in {\em some} contact structure on $\R^5$, but the contact structure could depend on $\xi$. It was essential in Kasuya's argument that the target space was open (since he relied on an existence result of Gromov for open manifolds). Using \cite{BormanEliashbergMurphy15} one can immediately extend Kasuya's argument to obtain the above result, see Section~\ref{othigh}. 
\end{remark}
\begin{remark}
Question~\ref{G} is still relevant as one would like to embed contact manifolds in the ``simplest" and ``nicest" contact structures possible.  
\end{remark}
In regards to Question~\ref{H} and looking for the ``simplest'' target space for embeddings one might ask if all contact 3--manifolds can be contact embedded in some contact structure on the product of a surface and a 3--manifold. In \cite{EtnyreLekili} the first author and Lekili show that there is an overtwisted contact structure on $S^2\times S^3$ into which every contact 3--manifold contact embeds and there is also a Stein fillable contact structure (and hence not overtwisted) on the twisted $S^3$ bundle over $S^2$ with the same property.

We now turn to Question~\ref{F} concerning the uniqueness of transverse contact embeddings. We note that two invariants of a transverse contact embedding of $M^3$ into $(S^5,\xi_{std})$ are (1) the smooth isotopy class and (2) the contact structure induced on $M$ by the embedding. Thus Theorem~\ref{embeds3s} shows there are infinitely many non-transversely isotopic contact embeddings of $S^3$ (we will abbreviate the phrase ``transverse contact isotopy'' to ``transverse isotopy''). 
\begin{question}
Are there embeddings of $(S^3,\xi_{std})$ into $(S^5,\xi_{std})$ that are smoothly isotopic but not isotopic through contact embeddings?
\end{question}
We do not answer this question here, but think that it is likely there are. We also point out that there are likely no simple algebraic invariants (like the self-linking number in dimension 3) by observing the following result which follows directly from \cite{BormanEliashbergMurphy15}.
\begin{thm}\label{looseweek}
Let $e_i\colon (S^3,\xi)\to (S^5,\xi_{ot}), i=1,2,$ be two contact embeddings of contact structure on $S^3$ into the overtwisted contact structure on $S^5$ such that the contact structure on the complements of their images are overtwisted. If $e_1$ is smoothly isotopic to $e_2$, then there is a contactomorphism $\phi\colon (S^5,\xi_{ot})\to (S^5,\xi_{ot})$ such that $e_2=\phi\circ e_1$. 
\end{thm}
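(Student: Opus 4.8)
The plan is to promote the given smooth isotopy between $e_1$ and $e_2$ to an ambient contactomorphism of $(S^5,\xi_{ot})$ by feeding the situation into the relative $h$--principle for overtwisted contact structures from \cite{BormanEliashbergMurphy15}. The mechanism I would use is the relative uniqueness statement: two overtwisted contact structures on a closed manifold that agree near a closed set $A$, are overtwisted away from $A$, and are homotopic rel $A$ through almost contact structures are joined by a contact isotopy fixed near $A$. I will apply this with $A$ the embedded $3$--sphere, and the hypothesis that \emph{both} complements are overtwisted is exactly what supplies overtwistedness away from $A$ for the two structures I compare.

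Write $N_1=e_1(S^3)$ and $N_2=e_2(S^3)$. By the smooth isotopy extension theorem the smooth isotopy from $e_1$ to $e_2$ yields a diffeomorphism $\psi\colon S^5\to S^5$, smoothly isotopic to the identity, with $\psi\circ e_1=e_2$; in particular $\psi(N_1)=N_2$ and $\psi|_{N_1}=e_2\circ e_1^{-1}$. Since $e_1,e_2$ are contact embeddings inducing the same $\xi$, the map $e_2\circ e_1^{-1}\colon N_1\to N_2$ is a contactomorphism of the induced contact submanifolds. Using the contact neighborhood theorem for contact submanifolds, together with the fact that smoothly isotopic submanifolds have isomorphic oriented (hence conformal symplectic) normal bundles, I can arrange that $\psi$ is already an honest contactomorphism on a neighborhood of $N_1$, after a modification supported near $N_1$ that is absorbed into the final map. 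Setting $\eta=\psi^*\xi_{ot}$, the diffeomorphism $\psi$ is a contactomorphism $(S^5,\eta)\to(S^5,\xi_{ot})$ taking $N_1$ to $N_2$, so $\eta$ and $\xi_{ot}$ coincide near $N_1$; moreover $\eta$ is overtwisted on $S^5\setminus N_1$ (being contactomorphic via $\psi$ to $\xi_{ot}$ on $S^5\setminus N_2$, overtwisted by hypothesis), and $\xi_{ot}$ is overtwisted on $S^5\setminus N_1$ by hypothesis.

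It remains to check the formal hypothesis, that $\xi_{ot}$ and $\eta$ are homotopic rel a neighborhood of $N_1$ as almost contact structures. The free homotopy $\psi_t^*\xi_{ot}$ coming from an isotopy $\psi_t$ from $\Id$ to $\psi$ already connects them, and the obstructions to improving it to a homotopy rel $N_1$ lie in relative cohomology groups of the pair $(S^5,S^3)$ with coefficients in the homotopy groups of the fiber $SO(5)/U(2)\cong\mathbb{CP}^3$. Since $H^j(S^5,S^3;\Z)=0$ for $j\le 3$, while $\pi_j(\mathbb{CP}^3)=0$ for $j=3,4,5$ and $\pi_2=\Z$ meets only vanishing cohomology, all the obstructions vanish and the rel-$N_1$ homotopy exists; this is the precise sense in which the statement is a direct corollary of \cite{BormanEliashbergMurphy15}, namely that for $S^3\subset S^5$ the formal data is essentially unique. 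The relative uniqueness theorem then yields a path $\xi_t$ of contact structures from $\xi_{ot}$ to $\eta$ that is constant near $N_1$, and relative Gray stability converts it into an isotopy $g_t$ with $g_0=\Id$, $g_1^*\eta=\xi_{ot}$, and $g_1|_{N_1}=\Id$. The map $\phi=\psi\circ g_1$ then satisfies $\phi^*\xi_{ot}=g_1^*\psi^*\xi_{ot}=g_1^*\eta=\xi_{ot}$ and $\phi\circ e_1=\psi\circ g_1\circ e_1=\psi\circ e_1=e_2$, which is the desired contactomorphism.

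The step I expect to be the main obstacle is the neighborhood normalization in the second paragraph: arranging that $\psi$ is a genuine contactomorphism near $N_1$ so that $\xi_{ot}$ and $\eta$ have literally equal germs along $N_1$. This requires matching the conformal symplectic normal bundles of the two embeddings and checking that the normalizing isotopy can be chosen compatibly with the global smooth isotopy. Everything else is formal, and the two overtwistedness hypotheses enter exactly to guarantee that both $\xi_{ot}$ and $\eta=\psi^*\xi_{ot}$ are overtwisted on the complement of $N_1$, which is what the relative uniqueness theorem of \cite{BormanEliashbergMurphy15} demands.
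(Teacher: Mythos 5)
Your proposal is correct and is essentially the paper's own proof: both arguments use the smooth isotopy together with the contact neighborhood theorem (Proposition~\ref{nbhdprop}) to produce an ambient diffeomorphism that is a genuine contactomorphism near the embedded sphere, then verify that the almost contact obstruction to a relative homotopy vanishes --- your computation in $H^j(S^5,S^3;\pi_j(\C P^3))$ is, by excision, the same as the paper's observation that $H^2(\overline{S^5-N_i},\partial(\overline{S^5-N_i});\Z)=0$ because the complement is a homology $S^1\times D^4$ --- and conclude by Theorem~\ref{BEM2} followed by Gray stability. The only differences are cosmetic: you compare pullbacks rel $e_1(S^3)$ where the paper compares pushforwards rel a neighborhood of $e_2(S^3)$.
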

A contact submanifold of an overtwisted contact manifold is called loose if the contact structure on the complement of the submanifold is overtwisted. The above theorem basically says that up to contactomorphism the only invariants of a loose transverse contact embedding of the 3--sphere are the two discussed above.

\subsection{Braided embeddings}
Given $n$--manifolds $Y$ and $M$ we say that $M$ is {\em braided about $Y$} if there is an embedding $e\colon M\to Y\times D^2$ such that $\pi\circ e\colon M\to Y$ is a branched covering map, where $\pi\colon Y\times D^2\to Y$ is projection. (Though the definition works in complete generality, in this paper we will restrict attention to the case where the branch locus is a smooth submanifold.) If $M$ is braided about $Y$ and $Y$ is embedded in some $(n+2)$--manifold $W$ with neighborhood $Y\times D^2$ then notice that there is an embedding of $M$ into $W$ too.  Such an embedding of $M$ into $W$ will be called a {\em braided embedding} and if an embedding of $M$ into $W$ can be isotoped to be such an embedding then we will say the embedding can be braided about $Y$. The most common setting for such problems will be when $Y=S^n$ and $W=S^{n+2}$. 
We have the following obvious questions. 
\begin{question}\label{A}
Which $M$ can be braided about which $Y$?
\end{question}
\begin{question}\label{B}
Can a given branched covering $p\colon M\to Y$ be realized via a braiding of $M$ about $Y$?
\end{question}
\begin{question}\label{C}
Can a given embedding of $M^n$ into $S^{n+2}$ be isotoped to be braided about the standardly embedded $S^n$ in $S^{n+2}$?
\end{question}

Below we will discuss various answers to theses questions and how they are related to, and can be studied by, contact geometry. 

Since a branched cover in dimension 1 is simply a cover, one sees the notion of a braided embedding $S^1$ in $S^3$ simply recovers the classical notion of a closed braid. And thus when $n=1$, Question~\ref{C} was answered affirmatively by Alexander \cite{Alexander23}. When $n=2$, Viro discussed an affirmative answer to Question~\ref{C} in lectures in 1990 where the notion of braided embeddings in the context we are using here seem to have first been discussed (though there were precursors in \cite{Rudolph83a} and \cite{HildenLozanoMontesinos83}). Viro's proof never appeared in the literature but an alternate proof was given by Kamada in \cite{Kamada94}. 

The first partial answers to Question~\ref{B} appeared in work of Hilden \cite{Hilden78} and Hilden, Lozano, and Montesinos \cite{HildenLozanoMontesinos83} where it was shown that dihedral covers of $S^3$ can be braided and in the former paper it was observed that cyclic covers in all dimensions can be braided.
Prompted by discussions with the first author, Questions~\ref{A} and~\ref{B} were further addressed by Carter and Kamada in \cite{CarterKamada12, CarterKamada15b, CarterKamada15}. Concerning Question~\ref{B}, in Theorem~\ref{cyclicbraid} we give another proof of Hilden's result  that cyclic branched covers (with certain conditions on the branched set) can always be realized as braidings. This was also observed for 2--fold covers in dimension 2, 3, and 4 in \cite{CarterKamada15b}. We also show in Theorem~\ref{immerse} that any branched cover (whose branch locus is a smooth submanifold with trivial normal bundle) can be braided by using an immersion instead of an embedding. This generalizes the result for simple 3--fold branched covers in dimension 1, 2 and 3 from \cite{CarterKamada15b}.

An example of a simple branched covering of $S^3$ that could not be braided about $S^3$ (that is an example showing the answer to Question~\ref{B} is not always yes) was given in \cite{CarterKamada15b}. In Example~\ref{nobraid} we give an infinite family (and indicate how to make many more) of such examples using contact geometry (or more precisely the bundle theory underlying contact geometry). 

It is known that any $n$--manifold is a cover of $S^n$ branched along the $(n-2)$--skeleton of a standardly embedded $n$--simplex \cite{Alexander20} and there has been much study of how simple the branched set can be made. This is discussed more in Section~\ref{sec:bc} and in Examples~\ref{trivialnb} and~\ref{trivialnb2} we show how to use Theorem~\ref{immerse} to restrict the possible branched loci for the realization of some manifolds as branched covers over spheres. In particular, we show that $\C P^n$ cannot be realized as a cover of $S^{2n}$ branched over an embedded orientable submanifold for $n>1$. This was previously known for $n=2$ but seems to be a new result for larger $n$.  

Concerning Question~\ref{A} we note that Theorem~\ref{cyclicbraid}, or \cite{CarterKamada15b}, and the well-known fact any oriented 2--manifold is a 2--fold branched cover over $S^2$ says any oriented surface can be braided about $S^2$. The only other result along these lines seems to be the following result. 

\begin{thm}[Hilden, Lozano and Montesinos 1983,  \cite{HildenLozanoMontesinos83}]
Any closed oriented 3--manifold can be braided about $S^3$ and hence has a braided embedding in $S^5$.
\end{thm}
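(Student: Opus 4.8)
The plan is to factor the problem into two classical ingredients: first realize $M$ as a branched cover of $S^3$, and then promote that covering to a braiding. For the first ingredient I would invoke the theorem of Hilden and Montesinos (see \cite{Hilden78, HildenLozanoMontesinos83}) that every closed oriented $3$--manifold $M$ is a simple $3$--fold branched cover $p\colon M\to S^3$ with branch locus a knot $K\subset S^3$. Here ``simple'' means the monodromy $\rho\colon \pi_1(S^3\setminus K)\to S_3$ sends every meridian of $K$ to a transposition; since the meridians normally generate $\pi_1(S^3\setminus K)$ the image is generated by transpositions, and connectivity of $M$ forces it to be transitive, so the image is all of $S_3\cong D_3$. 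A knot in $S^3$ has trivial normal bundle, so Theorem~\ref{immerse} already produces a braided \emph{immersion} of $M$ into $S^3\times D^2$ covering $p$, and the entire content of the theorem becomes the problem of upgrading this immersion to an embedding.

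To carry out that upgrade I would work fiberwise. Away from $K$ the covering $p$ is an honest $3$--fold covering, so over $S^3\setminus K$ one wants to place the three sheets at three distinct points of the fiber $D^2$ and let these points move (braid) inside $D^2$ as one travels along loops in $S^3\setminus K$; over a small meridian disk transverse to $K$ one wants to insert the local model of the branched double cover $w\mapsto w^2$, realized as an embedded half--twist that brings together exactly the two points interchanged by the corresponding transposition while leaving the third fixed. Making this globally consistent amounts to lifting the monodromy $\rho$ to a representation $\tilde\rho\colon \pi_1(S^3\setminus K)\to B_3$ into the $3$--strand braid group, under which each meridian maps to the band generator (half twist) covering the assigned transposition. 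Composing with the tubular neighborhood $S^3\times D^2$ of the standardly embedded $S^3\subset S^5$, which has trivial normal bundle, then yields the promised braided embedding $M\hookrightarrow S^5$; this last step is immediate from the definition of a braided embedding.

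The main obstacle is precisely the existence of the lift $\tilde\rho$, and it is genuinely nontrivial: the map $B_3\to S_3$ has no section, and a naive choice of half twist for each Wirtinger generator need not respect the Wirtinger relations. Indeed at a crossing the relation $\mu_k=\mu_i\mu_j\mu_i^{-1}$ must lift to $\sigma_k=\sigma_i\sigma_j\sigma_i^{-1}$ in $B_3$, and because the two braids $\sigma_1\sigma_2\sigma_1^{-1}$ and $\sigma_2\sigma_1\sigma_2^{-1}$ both cover the transposition $(1\,3)$ yet are distinct in $B_3$, consistency at the branch set is a real constraint rather than a formality. This is exactly where the special structure of $3$--fold (equivalently dihedral) covers is used: Hilden, Lozano and Montesinos exploit the combinatorics of the associated $3$--coloring of a diagram of $K$ --- stabilizing or modifying the branch knot by moves that do not change $M$ --- to arrange the crossing data so that a compatible braid lift exists, which de-singularizes the immersion of the previous paragraph into an embedding. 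I would expect essentially all of the work to live in this consistency argument; the reduction to branched covers and the passage from the braiding to the embedding in $S^5$ are formal.
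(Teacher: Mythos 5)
You should know at the outset that the paper does not prove this statement: it is quoted, twice, as a theorem of Hilden, Lozano and Montesinos, with a citation to \cite{HildenLozanoMontesinos83} and to \cite[Section~5.3]{Montesinos85}, so there is no internal argument to compare yours against. Judged on its own terms, your reduction is the correct and standard one, and it is consistent with everything the paper does say. The reformulation of ``braid the covering $p$'' as ``lift the monodromy $\rho\colon\pi_1(S^3\setminus K)\to S_3$ along $B_3\to S_3$ with meridians going to band generators'' is exactly the framework of the cited Carter--Kamada papers \cite{CarterKamada12, CarterKamada15b}, and your insistence that the lift is genuinely obstructed is corroborated by the paper itself: Example~\ref{nobraid} exhibits simple $3$--fold covers of $S^3$ branched over knots that admit \emph{no} braiding whatsoever (obstructed by contact geometry via Theorem~\ref{thm:obstruct}). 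This confirms that the freedom you allow yourself --- changing the branch knot and its coloring by moves that preserve $M$ --- is not a convenience but is essential; any argument that tried to braid a \emph{fixed} simple cover would be false. Your small verifications (image of $\rho$ is all of $S_3$, no section of $B_3\to S_3$ by torsion-freeness, $\sigma_1\sigma_2\sigma_1^{-1}\neq\sigma_2\sigma_1\sigma_2^{-1}$ over $(1\,3)$, the application of Theorem~\ref{immerse} via triviality of the normal bundle of a knot) are all correct.

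The gap is that the decisive step is cited rather than proved, and it is cited to the very theorem under discussion: the claim that the colored diagram of $K$ can be modified, without changing $M$, until a compatible braid lift exists \emph{is} the Hilden--Lozano--Montesinos theorem, so as a self-contained proof your argument is circular exactly where you yourself say ``essentially all of the work'' lives. In mitigation, the paper treats the result the same way (pure citation), so your proposal is best read as an accurate reconstruction of the architecture of the cited proof rather than as a proof. One further comparison worth recording: the braiding result the paper \emph{does} prove, Theorem~\ref{cyclicbraid} for cyclic covers, uses a completely different global trick --- realizing the cover as the graph $\{z^n=h(x)\}$ of a defining function $h\colon Y\to\C$ for a Seifert hypersurface --- and that trick has no analogue here, since it relies on the deck group being cyclic (abelian monodromy factoring through $H_1$); your monodromy-lifting framework is the correct non-abelian replacement, and filling in the combinatorial consistency argument for $S_3$-colorings is precisely what separates Theorem~\ref{HLM} from Theorem~\ref{cyclicbraid}.
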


In regards to Question~\ref{E} about when an embedding can be isotoped to be a transverse contact embedding we note that outside of dimension 3, where the answer is known to be {\em yes}, it is not expected that there is a local $h$-principle that could give a positive answer to this question, but there might be a large isotopy providing a positive answer. We have the following result, which follows easily from Theorem~\ref{MainContactBraid}, in dimension 5 relating this to braiding. 

\begin{thm}\label{braidedembedimplycont}
If an embedding $M\to S^5$ can be isotoped to be a braided embedding about the standard $S^3$ in $S^5$ then it can be isotoped to be a transverse contact embedding.
\end{thm}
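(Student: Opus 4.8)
The plan is to reduce the statement to the local contact geometry of $(S^5,\xi_{std})$ along the standard $S^3$. First I would recall that the standard $S^3\subset(S^5,\xi_{std})$ is a contact submanifold with trivial conformal symplectic normal bundle, so by the contact neighborhood theorem a tubular neighborhood of $S^3$ is contactomorphic to $(S^3\times D^2,\ker\lambda)$, where $\lambda=\alpha_{std}+r^2\,d\theta$, the form $\alpha_{std}$ is the standard contact form on $S^3$, and $(r,\theta)$ are polar coordinates on the $D^2$ fibre. The content of the braided embedding hypothesis is exactly that, after the given smooth isotopy, the image of $M$ lies in this neighborhood $S^3\times D^2$ and that $p:=\pi\circ e\colon M\to S^3$ is a branched covering. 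Since an embedding is a transverse contact embedding precisely when the restriction of $\lambda$ to $M$ is a contact form (then $M$ is automatically transverse to $\ker\lambda$ and $TM\cap\xi_{std}=\ker(e^*\lambda)$ is a contact structure), the whole problem becomes: arrange, by a further isotopy of the braiding, that $e^*\lambda$ is contact.

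Writing $\rho=r\circ e$ and $\vartheta=\theta\circ e$, the induced form is $e^*\lambda=p^*\alpha_{std}+\rho^2\,d\vartheta$, and I would compute $e^*\lambda\wedge d(e^*\lambda)=p^*(\alpha_{std}\wedge d\alpha_{std})+2\rho\,p^*\alpha_{std}\wedge d\rho\wedge d\vartheta+\rho^2\,d\vartheta\wedge p^*d\alpha_{std}$. Away from the branch locus $p$ is a local diffeomorphism, so $p^*\alpha_{std}$ is already a contact form and the leading term $p^*(\alpha_{std}\wedge d\alpha_{std})$ is a volume form; hence by openness of the contact condition one can shrink the braiding into a thin tube (an isotopy scaling $\rho\to0$) so that the perturbation terms cannot destroy contactness on this region.

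The main obstacle is the behavior along the branch locus, where $p$ degenerates (modeled by $w\mapsto w^k$ in the normal $\C$--direction), so that $dp$ drops rank and the leading term $p^*(\alpha_{std}\wedge d\alpha_{std})$ vanishes identically. There the two remaining terms, $2\rho\,p^*\alpha_{std}\wedge d\rho\wedge d\vartheta$ and $\rho^2\,d\vartheta\wedge p^*d\alpha_{std}$, must conspire to keep $e^*\lambda\wedge d(e^*\lambda)$ nonzero. A normal-form analysis along the ramification set shows this forces the downstairs branch set to sit as a transverse (codimension $2$ contact) submanifold of $(S^3,\alpha_{std})$ and the sheets of the braiding to wind positively in the $\theta$--direction as they turn around it; both conditions are achievable by an isotopy of the braiding (a positive ``spinning up'' near the ramification set, together with the scaling of the previous paragraph). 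I expect this local model computation near the branch locus to be the crux of the matter, and it is exactly the content packaged by Theorem~\ref{MainContactBraid}.

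Granting Theorem~\ref{MainContactBraid}, the stated result is then immediate: the given embedding $M\to S^5$ is first isotoped to a braided embedding about the standard contact submanifold $S^3$, Theorem~\ref{MainContactBraid} further isotopes this braided embedding to a contact embedding whose image is transverse to $\xi_{std}$, and composing the two isotopies exhibits the original embedding as isotopic to a transverse contact embedding.
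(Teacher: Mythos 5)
Your overall route coincides with the paper's: identify a tubular neighborhood of the standard $S^3$ with $(S^3\times D^2,\ker(\alpha_{std}+r^2\,d\theta))$ via Proposition~\ref{nbhdprop}, and then feed the braiding into Theorem~\ref{MainContactBraid}. But the last step is not ``immediate,'' and that is where your argument has a genuine gap: Theorem~\ref{MainContactBraid} requires the branched covering $p=\pi\circ e$ to have branch locus $B$ that is \emph{not multiply ramified} (at most one ramified component of $p^{-1}(B)$ lying over each component of $B$), and a general braided embedding need not satisfy this. The hypothesis is not a technicality. In the proof of Theorem~\ref{MainContactBraid} the orientation of each component of $B$ is \emph{determined by the braiding}: at a ramified point $x$ the differential $df_x$ is an isomorphism from the normal fiber $\nu_x(\widetilde{B}')$ to $T_{f(x)}D^2$, which orients the ramification locus upstairs and hence, via $p$, the branch locus downstairs; one then realizes $B$ as a transverse link that is \emph{positive} with respect to this orientation and applies Lemma~\ref{mainlem}. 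If two ramified components lie over the same component of $B$ and wind with opposite signs, no choice of orientation on that component makes both windings positive, and Lemma~\ref{mainlem} cannot be invoked.

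This also explains why your proposed remedy --- a positive ``spinning up'' of the braiding near the ramification set --- does not work: composing $f$ with orientation-preserving rotations of the $D^2$ factor cannot change the sign with which a sheet winds about a ramified component, and reversing the orientation of a component of $B$ reverses \emph{all} the windings over that component simultaneously, so inconsistent signs over a single component cannot be repaired this way. What the paper does instead is isotope the braiding so as to change the branch locus itself: given two ramified components $\widetilde{B}_1,\widetilde{B}_2$ over the same component $B_0$ of $B$, choose a tubular neighborhood $N\cong S^1\times D^2$ of $B_0$ in $S^3$ and an isotopy $\psi_t$ supported in $N$, tangent to the $D^2$ factors and nonzero along $B_0$, and replace $p$ by $\psi_t\circ p$ on a neighborhood of $\widetilde{B}_1$ only. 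This moves the branching coming from $\widetilde{B}_1$ onto a parallel push-off $B_0'$ of $B_0$, reducing the number of ramified components over $B_0$ by one; after finitely many such moves the branch locus is not multiply ramified, each component can be oriented coherently, and (since any oriented link in $S^3$ can be isotoped to a positive transverse link) Theorem~\ref{MainContactBraid} applies. Your discussion away from the branch locus (scaling into a thin tube) and your identification of the local model near the ramification set as the crux are consistent with the paper --- they are the content of Lemma~\ref{mainlem} --- but the reduction to the not-multiply-ramified case, which you omit, is the actual substance of the paper's proof of this theorem.
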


Thus in dimension 5,  Question~\ref{E}  reduces to Question~\ref{C} for $n=3$. 

{\em Acknowledgments:} The authors are grateful to the anonymous referee for many valuable suggestions on the paper. The authors thank James Conway and Emmy Murphy for useful conversations about the work in this paper. We also thank Patrick Massot for pointing out subtleties that were overlooked in our original formulation of Theorem~\ref{bccontact}. In addition, we thank Donald Davis for help understanding immersions of $\R P^n$ into Euclidean space. The second author also thanks his advisor Takashi Tsuboi for continuing advice. He also thank Atsuhide Mori for useful arguments and Naohiko Kasuya for useful conversations. 
This first author was partially supported by a grant from the Simons Foundation (\#342144) and NSF grants DMS-1309073 and DMS-1608684. The second author was supported by the Program for Leading Graduate Schools, MEXT, Japan.

\section{Background and preliminary results}
In this section we begin by recalling basic definitions and results about contact embeddings. In the following two subsection we review a few facts about, homotopy classes of plane fields, contact 3--manifolds, and transverse and Legendrian knots. In Subsection~\ref{othigh} we discuss overtwisted contact structures in higher dimensions and prove Theorems~\ref{embedinOT} and~\ref{looseweek}. Then in Subsections~\ref{obd} and~\ref{sec:bc} we recall a few definitions and facts about open book decompositions and branched covers, respectively, and we end this section by discussing specific branched covers in dimension 2 and 3. 

\subsection{Contact structures and contact embeddings}

Recall a (co-oriented) contact structure on an oriented $(2n+1)$--dimensional manifold $M$ is a hyperplane distribution $\xi\subset TM$ that is defined as the kernel of a 1--form $\alpha$, $\xi=\ker \alpha$, for which $\alpha\wedge (d\alpha)^n$ is a volume form on $M$ defining the given orientation. 

Given $\xi=\ker\alpha$ one may easily see that $d\alpha$ gives $\xi$ the structure of a symplectic bundle. It is well known, see for example \cite{McDuffSalamon95}, that such a bundle also has a complex structure $J\colon \xi\to \xi$ that is compatible with $d\alpha$ and that $J$ is unique up to homotopy. Thus to a contact structure $\xi$ we can associate its Chern classes $c_1(\xi),\ldots, c_n(\xi)$ which become invariants of $\xi$.

One of the simplest examples of a contact structure is on the unit sphere $S^{2n-1}$ in $\C^n$ and is defined as the set of complex tangencies 
\[
\xi_{std}=TS^{2n-1}\cap J(TS^{2n-1}),
\]
where $J\colon T\C^n\to T\C^n$ is the almost complex structure on the tangent space of $\C^n$ induced from its complex structure. We will call this the \dfn{standard contact structure} on $S^{2n-1}$ and denote it $\xi_{std}$ without reference to the dimension of the sphere, which should always be clear from context.

Given two contact manifolds $(M,\xi)$ and $(W,\xi')$ we will call an embedding $e\colon M\to W$ a contact embedding if $e$ is transverse to $\xi'$ and a contactomorphism from $(M,\xi)$ to $(e(M), Te(M)\cap \xi')$. This can equivalently be expressed by saying there is a contact form $\alpha'$ defining $\xi'$ such that $e^*\alpha'$ is a contact form defining $\xi$. A simple and well-known application of a Moser type argument, see for example \cite[Proposition~2.1]{EtnyrePancholi11} or \cite[Theorem~2.5.15]{Geiges08}, yields the following result. 
\begin{prop}\label{nbhdprop}
Suppose that $e\colon (M^{2n+1},\xi)\to (W^{2(n+k)+1},\xi')$ is a contact embedding for which $e(M)$ has trivial normal bundle (as a symplectic bundle with symplectic structure induced from a contact form). Then there is a neighborhood of $e(M)$ in $W$ that is contactomorphic to $M\times D^{2k}$ with the contact structure $\ker(\alpha+\sum_{j=1}^k r_j^2\, d\theta_j),$ where $(r_j,\theta_j), j=1,\ldots, k$ are polar coordinates on the $D^2$ factors of $D^{2k}$ and $\alpha$ is a contact form for $\xi$. 
\end{prop}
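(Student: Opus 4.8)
The plan is to reduce the statement to the standard Moser-type contact neighbourhood theorem already quoted before it, by identifying $\ker\alpha_0$, where $\alpha_0=\alpha+\sum_{j=1}^k r_j^2\,d\theta_j$, as the neighbourhood model in the special case of a \emph{trivial} symplectic normal bundle. First I would record the relevant features of the model $(M\times D^{2k},\ker\alpha_0)$. Writing $r_j^2\,d\theta_j=x_j\,dy_j-y_j\,dx_j$ and $\omega_j=dx_j\wedge dy_j$, one has $d\alpha_0=d\alpha+2\sum_j\omega_j$, and when $\alpha_0\wedge(d\alpha_0)^{n+k}$ is expanded the only top-degree term that survives is a nonzero multiple of $\alpha\wedge(d\alpha)^n\wedge\omega_1\wedge\cdots\wedge\omega_k$; hence $\alpha_0$ is a contact form. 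Restricting to the zero section gives $\alpha_0|_{M\times\{0\}}=\alpha$, so $M\times\{0\}$ is a contact submanifold with induced contact structure $\ker\alpha$, and along it $d\alpha_0$ splits as $d\alpha$ on $TM$ and the standard symplectic form $2\sum_j\omega_j$ on the $D^{2k}$ directions. In particular the symplectic normal bundle of $M\times\{0\}$ in this model is the trivial bundle $M\times(\R^{2k},\omega_{std})$.

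Next I would compare this model with the given embedding. After multiplying $\alpha'$ by a positive function (which does not change $\xi'$) I may assume $e^*\alpha'=\alpha$. Using the tubular neighbourhood theorem together with the symplectic trivialisation of $\nu$, the $d\alpha'$--orthogonal complement of $\xi$ in $\xi'|_{e(M)}$, provided by the hypothesis, I would build a diffeomorphism $\Phi$ from a neighbourhood of $M\times\{0\}$ onto a neighbourhood of $e(M)$ with $\Phi|_{M\times\{0\}}=e$, sending the $D^{2k}$ directions into $\nu$ via the trivialisation. Because $\nu\subset\xi'=\ker\alpha'$ and $e^*\alpha'=\alpha$, the forms $\alpha_0$ and $\alpha_1:=\Phi^*\alpha'$ agree along $M\times\{0\}$; and after arranging that the trivialisation carries $2\sum_j\omega_j$ to $d\alpha'|_\nu$ and (by a shear correcting the mixed tangent--normal terms) that $d\alpha_0=d\alpha_1$ there as well, the two forms have the same $1$--jet along $M\times\{0\}$.

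Finally I would run the Moser/Gray-stability argument relative to $M\times\{0\}$. Since $\alpha_0$ and $\alpha_1$ have equal $1$--jets on the zero section, every form in the path $\alpha_t=(1-t)\alpha_0+t\alpha_1$ has the $1$--jet of $\alpha_0$ along $M\times\{0\}$, so each $\alpha_t$ is contact on a common neighbourhood (openness of the contact condition plus compactness in $t$). Solving $\iota_{X_t}d\alpha_t=-(\alpha_1-\alpha_0)$ on $\ker\alpha_t$ and taking the Reeb component of $X_t$ to be zero produces a time-dependent vector field $X_t$ that vanishes on $M\times\{0\}$ (because $\alpha_1-\alpha_0$ does); its flow $\psi_t$ therefore fixes the zero section and satisfies $\psi_1^*\alpha_1=\lambda\,\alpha_0$ for a positive function $\lambda$. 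Then $\Phi\circ\psi_1$ is a contactomorphism from a neighbourhood of $M\times\{0\}$ in $(M\times D^{2k},\ker\alpha_0)$ onto a neighbourhood of $e(M)$ in $(W,\xi')$, which after rescaling the $D^{2k}$ factor gives the assertion.

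The one genuinely substantive point is the construction of $\Phi$ with the correct $1$--jet along $M\times\{0\}$: matching the normal--normal part of $d\alpha'$ is exactly where the triviality of the symplectic normal bundle is used, while matching the mixed tangent--normal terms is the usual bookkeeping in neighbourhood theorems. Everything after that is the standard Moser argument referenced just before the statement, so the proof is essentially the observation that $\ker\alpha_0$ is the explicit model for the trivial-normal-bundle case of that theorem.
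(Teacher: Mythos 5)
The paper offers no proof of this proposition: it is quoted as a known Moser-type result, with references to \cite{EtnyrePancholi11} and \cite[Theorem~2.5.15]{Geiges08}, and your outline follows exactly that standard route. Your verification that $\alpha_0=\alpha+\sum_j r_j^2\,d\theta_j$ is contact, the normalization $e^*\alpha'=\alpha$ by a conformal rescaling, and the Moser step (vector field in $\ker\alpha_t$ solving $\iota_{X_t}d\alpha_t=-(\alpha_1-\alpha_0)$, vanishing on the zero section) are all correct.

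There is, however, a genuine problem in the step you yourself flag as the substantive one: in general \emph{no} choice of tubular neighborhood map $\Phi$ with $\Phi|_{M\times\{0\}}=e$ --- shear or otherwise --- achieves $d\alpha_0=d\alpha_1$ along the zero section while keeping $\alpha_0=\alpha_1$ there. Matching values forces the normal image $N_p=d\Phi(\{0\}\times\R^{2k})$ to lie in $\xi'_{e(p)}$, and killing \emph{all} mixed terms forces $N_p\subset\bigl(de(T_pM)\bigr)^{\perp d\alpha'}$. Writing $de(R_\alpha)=R_{\alpha'}+X_p$ with $X_p\in\xi'$ (the $R$'s being the Reeb fields), one has $d\alpha'(de\,R_\alpha,w)=d\alpha'(X_p,w)$ for $w\in\nu_p$; whenever $X_p$ has nonzero component in $\nu_p$ (which happens already for small perturbations of the product embedding), the space $\xi'_{e(p)}\cap\bigl(de(T_pM)\bigr)^{\perp d\alpha'}$ has dimension only $2k-1$ and so contains no complement of $de(T_pM)$. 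A shear cannot repair this: precomposing with $(p,z)\mapsto(\phi_z(p),z)$ changes only the mixed terms against $\xi$, never those against $R_\alpha$. Two standard fixes, either of which completes your argument: (a) conformally rescale the ambient form, replacing $\alpha'$ by $g\alpha'$ with $g\equiv1$ on $e(M)$ and normal derivative $dg(w)=-d\alpha'(X_p,w)$ on $\nu_p$; the term $dg\wedge\alpha'$ then cancels exactly the Reeb--normal mixed terms and full matching of values and differentials holds; or (b) make no correction at all and observe that the discrepancy $d\alpha_1-d\alpha_0$ along the zero section has the form $\alpha_0\wedge\gamma$ with $\gamma|_{TM}=0$, hence vanishes on $\ker\alpha_0\times\ker\alpha_0$; therefore each $\alpha_t$ in your linear path is still contact at, and hence near, the zero section, which is all the Moser trick needs. (Two smaller points: agreement of a $1$--form and of its differential along a submanifold is strictly weaker than agreement of $1$--jets --- compare $z\,dz$ along $\{z=0\}$ --- but it is the correct hypothesis for the Moser step; and your final ``rescaling of the $D^{2k}$ factor'' is not a contactomorphism of the model unless you simultaneously replace $\alpha$ by $\epsilon^{-2}\alpha$, which is harmless since the proposition asserts only that $\alpha$ is \emph{some} contact form for $\xi$.)
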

\begin{remark}
Our applications of this proposition will be in the co-dimension 2 case where having trivial oriented normal bundle is equivalent to having trivial symplectic normal bundle. 
\end{remark}

We also make the simple observation that the connect sum operation, which is well known in the contact category, can be done in a relative setting.
\begin{lem}\label{cconnectsum}
If $(M_i,\xi_i)$ is a contact submanifold of $(W_i,\xi'_i)$ for $i=1,2$, then $(M_1\# M_2,\xi_1\# \xi_2)$ is a contact submanifold of $(W_1\# W_2,\xi'_1\# \xi'_2)$. 
\end{lem}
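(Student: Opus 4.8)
The plan is to realize both connected sums—the ambient one and the submanifold one—simultaneously by gluing along a single neck that is adapted to the contact submanifold. First I would choose base points $p_i\in M_i$ and produce a \emph{relative Darboux model} for each pair near $p_i$. Since the normal bundle of $M_i$ in $W_i$ is automatically trivial over a small ball around $p_i$, Proposition~\ref{nbhdprop} (applied to a Darboux chart for $(M_i,\xi_i)$ about $p_i$) provides a neighborhood of $p_i$ in $W_i$ contactomorphic to a fixed standard model $(U,L)$, where $U\subset\R^{2m+1}$ is a ball carrying the standard contact form and $L=U\cap\R^{2n+1}$ is the standard linear contact $\R^{2n+1}\subset\R^{2m+1}$ obtained by setting the $D^{2k}$-coordinates of the proposition to zero. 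The point is that this local model for the pair does not depend on $i$: both $(W_1,M_1)$ near $p_1$ and $(W_2,M_2)$ near $p_2$ look like the \emph{same} standard pair.

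Next I would perform the contact connected sum of $W_1$ and $W_2$ in the usual way, but arrange the gluing to respect the submanifolds. Concretely, remove the standard balls $B_i$ around $p_i$ and glue $W_1\setminus B_1$ to $W_2\setminus B_2$ across a neck $N\cong S^{2m}\times(-1,1)$ using the standard contactomorphism from the contact connected sum construction. Because both pairs were identified with the fixed standard model, this same map carries $M_1\cap N$ to $M_2\cap N$ and restricts on the sub-neck $S^{2n}\times(-1,1)$ to exactly the analogous gluing that produces the contact connected sum of the submanifolds. Thus on the nose the resulting ambient manifold is $(W_1\#W_2,\xi'_1\#\xi'_2)$, and the image of $(M_1\setminus B_1)\cup_N(M_2\setminus B_2)$ is $M_1\#M_2$ carrying the contact structure $\xi_1\#\xi_2$.

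Finally I would check that this image is a contact submanifold of $(W_1\#W_2,\xi'_1\#\xi'_2)$. Away from the neck this is immediate, since the embeddings are unchanged from $(W_i,M_i)$, where transversality to $\xi'_i$ and the identity $\xi_i=\xi'_i\cap TM_i$ hold by hypothesis; along the neck it follows directly from the standard model, in which the linear $\R^{2n+1}$ is visibly a contact submanifold of $\R^{2m+1}$. The main obstacle is entirely contained in the first step: one must produce local models of the pairs that are simultaneously standard as ambient contact balls \emph{and} as contact-submanifold pairs, so that a single neck contactomorphism realizes both connected sums at once. Once Proposition~\ref{nbhdprop} furnishes this relative Darboux model, the remainder is the well-known contact connected sum carried out compatibly with the submanifold, and nothing deeper is required.
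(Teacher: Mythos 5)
Your proposal is correct, and in fact it supplies more than the paper does: the paper states Lemma~\ref{cconnectsum} as a ``simple observation'' with no proof at all, so there is no argument of the authors to compare against. Your argument is the natural one and the one the authors presumably had in mind. The two genuine ingredients are exactly the ones you isolate: first, a \emph{relative} Darboux model for the pair, which you correctly obtain by applying Proposition~\ref{nbhdprop} to a Darboux ball $D\subset M_i$ about $p_i$ (its normal bundle is trivial because $D$ is contractible, and the resulting model $D\times D^{2k}$ with form $\alpha_{std}+\sum r_j^2\,d\theta_j$ is contactomorphic to the standard linear pair $(\R^{2m+1},\R^{2n+1})$); second, a neck gluing performed inside that model. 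Two small points deserve a word more care than you give them. Proposition~\ref{nbhdprop} is stated for a contact embedding of a manifold $M$, and the Moser-type argument behind it is cleanest for compact $M$; applying it to an open Darboux chart should be done by first shrinking to a compact sub-ball. More substantively, the assertion that ``the standard contactomorphism from the contact connected sum construction'' carries $M_1\cap N$ to $M_2\cap N$ is the crux, and it should be anchored in an explicit neck model rather than invoked: for instance, perform the connected sum as a Weinstein $1$-handle attachment in $\R^{2m+2}$, and observe that the symplectic coordinate subspace $\R^{2n+2}$ containing the handle directions is preserved by the Liouville vector field, so the ambient handle contains a sub-handle realizing the connected sum of the submanifolds; equivalently, note that the complement of two standard pair-Darboux balls in the standard pair $(S^{2m+1},S^{2n+1})$ is a neck of pairs. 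With either model made explicit, your proof is complete.
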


\subsection{Homotopy classes of plane fields on 3--manifolds}\label{htpyclasses}
The most basic invariant of a contact structure on a 3--manifold is the homotopy type of the underlying plane field. We now review part of the classification of homotopy classes of plane fields on a closed $3$--manifold as described in \cite{Gompf98}. 

Let $\xi$ be an oriented $2$--plane field on a closed oriented $3$-manifold $M$. Any two such plane fields are homotopic over the 1--skeleton of $M$. The homotopy type of $\xi$ over the 2--skeleton is completely determined by a refinement of $c_1(\xi)$. Namely let $\mathcal{S}$ be the set of spin structures on $M$ and $\mathcal{H}$ be the subset of $H_1(M)$ consisting of classes $c$ such that $2c$ is Poincar\'e dual to $c_1(\xi)$. In \cite{Gompf98}, Gompf defines a map
\[
\Gamma_\xi\colon \mathcal{S}\to\mathcal{H}
\]
that completely determines $\xi$, up to homotopy, over the 2--skeleton of $M$. Notice that if $H^2(M)$ (or equivalently $H_1(M)$) has no 2--torsion then $\mathcal{H}$ has a unique element in it and $\Gamma_\xi$ is completely determined by $c_1(\xi)$. 

If $\xi$ has torsion $c_1(\xi)$ in $H^2(M)$ then the homotopy class of $\xi$ over the 3--skeleton (and hence over $M$) is determined by a ``3--dimensional obstruction" $d_3(\xi)$ (and of course $\Gamma_\xi$), see \cite[Definition~4.15]{Gompf98}. In order to define $d_3(\xi)\in \Q$ it was shown in \cite{Gompf98} that one may choose an almost complex $4$-manifold $(X,J)$ whose almost complex boundary is $(M,\xi)$, then one defines
\[
d_3(\xi)=\frac{1}{4}(c_1^2(X,J)-3\sigma(X)-2(\chi(X)-1)),
\]
where $\sigma(X)$ and $\chi(X)$ are the signature of $X$ and the Euler characteristic of $X$, respectively. Notice that we have subtracted 1 from $\chi(X)$ unlike the definition in \cite{Gompf98}. This is done so that on $S^3$ the invariant $d_3$ takes values in $\Z$ instead of the half-integers and there is a better connected sum formula. One may easily check that for the standard contact structure on $S^3$ we have $d_3(\xi_{std})=0$. 

\begin{prop}[Gompf 1998, {\cite[Theorem~4.16]{Gompf98}}]\label{gompfclass}
Let $\xi_1$ and $\xi_2$ be $2$-plane fields on a closed oriented $3$-manifold $M$ and suppose that $c_1(\xi_1)$ and $c_1(\xi_2)$ are torsion classes. Then $\xi_1$ and $\xi_2$ are homotopic if and only if $\Gamma_{\xi_1}({\mathbf s})=\Gamma_{\xi_2}({\mathbf s})$ for a (and hence any) spin structure ${\mathbf s}$ on $M$ and $d_3(\xi_1)=d_3(\xi_2)$.   
\end{prop}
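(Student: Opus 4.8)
The plan is to prove this by obstruction theory, viewing a co-oriented oriented $2$--plane field on $M$ as a section of the unit tangent bundle $S(TM)\to M$, which is an $S^2$--bundle over $M$ (the section records the positively oriented unit normal to the plane). Homotopy classes of $2$--plane fields then become homotopy classes of such sections, and I would compare $\xi_1$ and $\xi_2$ cell by cell over a CW structure on $M$. The forward implication is immediate, since $\Gamma_\xi$ and $d_3(\xi)$ are homotopy invariants by construction; all the work is in the converse.

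For the converse, suppose $\Gamma_{\xi_1}(\mathbf{s})=\Gamma_{\xi_2}(\mathbf{s})$ and $d_3(\xi_1)=d_3(\xi_2)$. First I would homotope the two sections to agree over the $1$--skeleton: connectivity of the fiber (i.e.\ $\pi_0(S^2)=0$) lets us match them over the $0$--skeleton, and the obstruction to extending the homotopy over $1$--cells lies in $\pi_1(S^2)=0$, so this step is unobstructed. This recovers the fact, already quoted above, that any two plane fields agree over the $1$--skeleton. Over the $2$--skeleton the primary difference obstruction lives in $H^2(M;\pi_2(S^2))\cong H^2(M;\Z)$, and the key point is to identify this difference class, via Poincar\'e duality, with $\Gamma_{\xi_1}(\mathbf{s})-\Gamma_{\xi_2}(\mathbf{s})\in H_1(M)$. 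Once that identification is in place, the hypothesis $\Gamma_{\xi_1}(\mathbf{s})=\Gamma_{\xi_2}(\mathbf{s})$ says exactly that the sections can be homotoped to agree over the $2$--skeleton; the role of the fixed spin structure $\mathbf{s}$ is to supply a reference trivialization against which the ``halved'' class $\Gamma_\xi$ is measured, converting a difference obstruction into the absolute invariant $\Gamma_\xi$.

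With $\xi_1$ and $\xi_2$ now agreeing over the $2$--skeleton, the only remaining obstruction to a homotopy over the top cell lies in $H^3(M;\pi_3(S^2))\cong\Z$, since $M$ is closed, connected and oriented and $\pi_3(S^2)\cong\Z$ is generated by the Hopf map. The final step is to show this integer equals $d_3(\xi_1)-d_3(\xi_2)$, so that it vanishes under our hypothesis. Here I would use the filling definition $d_3(\xi)=\frac14\bigl(c_1^2(X,J)-3\sigma(X)-2(\chi(X)-1)\bigr)$: gluing an almost complex filling of $(M,\xi_1)$ to one of $(M,\xi_2)$ along $M$ produces a closed almost complex $4$--manifold, for which the relation $c_1^2=2\chi+3\sigma$ holds, and comparing the two sides extracts the top difference obstruction as $d_3(\xi_1)-d_3(\xi_2)$.

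The hard part will be precisely this last identification, and it is exactly where the torsion hypothesis on $c_1$ enters. The $\Z$--valued top obstruction is a priori defined only after choosing a homotopy over the $2$--skeleton, and different choices alter it by the image of $H^2(M;\pi_2(S^2))$ under the map induced by the Whitehead product $[\iota_2,\iota_2]=2\eta\in\pi_3(S^2)$; controlling this indeterminacy, and making sense of $c_1^2(X,J)$ as a rational self--intersection of a relative class, both require $c_1(\xi_i)$ to be torsion. Granting that hypothesis, $d_3(\xi)$ is a well-defined rational number whose difference $d_3(\xi_1)-d_3(\xi_2)$ is precisely the integral top obstruction once the $\Gamma$--invariants agree, and its vanishing completes the cell-by-cell homotopy. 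This is the content of Gompf's argument in \cite{Gompf98}, which I would follow.
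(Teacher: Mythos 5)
The paper offers no proof of this proposition at all---it is quoted directly from Gompf's Theorem~4.16---and your obstruction-theoretic outline (matching sections of $S(TM)$ over the $1$--skeleton, identifying the primary difference class in $H^2(M;\pi_2(S^2))$ with $\Gamma_{\xi_1}(\mathbf{s})-\Gamma_{\xi_2}(\mathbf{s})$ via Poincar\'e duality, and showing the top obstruction in $H^3(M;\pi_3(S^2))\cong\Z$ equals $d_3(\xi_1)-d_3(\xi_2)$, with the torsion hypothesis on $c_1$ entering exactly to kill the Whitehead-product indeterminacy and to make $c_1^2(X,J)\in\Q$ well defined) is a faithful summary of the argument in the cited source. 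The one compression to flag is the gluing step: $X_1\cup_M(-X_2)$ is not itself an almost complex manifold (orientation reversal obstructs this---e.g.\ $-\C P^2$ admits no almost complex structure), so one cannot literally apply $c_1^2=2\chi+3\sigma$ to the glued manifold; instead the comparison of characteristic numbers must be run through glued spin$^c$ structures or relative obstruction classes, which is how Gompf actually extracts $d_3(\xi_1)-d_3(\xi_2)$ from the closed-manifold relation.
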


From the formula above for the $d_3$ invariant it is clear that if $\xi$ and $\xi'$ are two contact structures with torsion first Chern classes then the $d_3$ invariant of the connect sum of the contact manifolds is
\[
d_3(\xi\#\xi')=d_3(\xi)+d_3(\xi').
\]

\subsection{Contact structures on 3--manifolds and transverse and Legendrian knots}\label{contactintro}
Recall that contact structures on 3--manifolds fall into one of two types: tight or overtwisted. A contact structure $\xi$ on a 3--manifold $M$ is \dfn{overtwisted} if there is an embedded disk $D\subset M$ such that $D$ is tangent to $\xi$ along its entire boundary: $T_xD=\xi_x$ for all $x\in \partial D$. Such a disk is called an \dfn{overtwisted disk}. If no such disk exists then we call $\xi$ \dfn{tight}. It is well known that $\xi_{std}$ on $S^3$ is tight \cite{Bennequin83} and the unique tight contact structure on $S^3$ \cite{Eliashberg92a}.

In \cite{Eliashberg89}, Eliashberg classified overtwisted contact structures. 
\begin{thm}[Eliashberg 1989, \cite{Eliashberg89}]\label{otclass}
The inclusion of the set of overtwisted (co-oriented) contact structures on a closed oriented 3--manifold into the set of oriented plane fields induces a one-to-one correspondence of connected components. 
\end{thm}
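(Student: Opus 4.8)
The plan is to deduce the statement as the $\pi_0$ shadow of a relative, parametric $h$--principle for overtwisted contact structures, in the spirit of Eliashberg's original argument. The first observation is that in dimension $3$ the \emph{formal} counterpart of a contact structure carries no extra data: on an oriented $2$--plane the space of compatible complex (equivalently, conformal symplectic) structures is contractible, so an almost contact structure is nothing but a co-oriented oriented $2$--plane field. Hence the target ``set of oriented plane fields'' is exactly the space of formal solutions, and the theorem asserts precisely that the inclusion of genuine overtwisted contact structures into formal ones is a bijection on connected components. Combined with Gray stability (a path of contact structures integrates to an ambient isotopy), a $\pi_0$ statement about plane fields translates into an isotopy classification of overtwisted structures, so it suffices to show this inclusion induces a $\pi_0$--bijection.

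To make the problem relative I would fix a point $p \in M$, a Darboux ball around it, and inside it a fixed model overtwisted disk $\Delta$ with the standard overtwisted germ, and then work throughout with plane fields and contact structures that agree with this model near $\Delta$; the overtwistedness hypothesis is encoded simply by always having this model available to import flexibility. The two things to prove are then: \emph{surjectivity} (every plane field is homotopic, rel the model, to an overtwisted contact structure, which also follows from Lutz--Martinet together with a full Lutz twist), and \emph{injectivity} (any path of plane fields joining two such contact structures can be deformed, rel endpoints, to a path of contact structures).

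Both assertions I would establish by a single induction over a handle decomposition (or triangulation) of $M$ built on the model ball. Over each handle the task is a relative extension problem: given a contact structure already defined near the attaching region and a formal plane-field extension across the handle, produce a genuine contact extension homotopic rel boundary to the formal one. The $0$-- and $1$--handles are dispatched by Darboux's theorem and the flexibility of contact structures on $I \times D^2$, where the relevant spaces of boundary germs are readily seen to be connected, so no contact topology beyond normal forms is needed there.

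The crux, and the step I expect to be the main obstacle, is filling across the top cells: given a contact germ on a $2$--sphere $S^2 = \partial D^3$, recorded by its characteristic foliation, one must decide when it bounds a contact structure on $D^3$ and control that choice in families. This is exactly where genuine contact topology enters. Using convex surface theory I would first normalize the characteristic foliation on $S^2$ and its dividing set; a \emph{tight} fill of $D^3$ is then essentially unique but exists only under a Bennequin--type constraint on the dividing set. The whole point of the fixed overtwisted model is that it supplies the room to absorb this obstruction, so that in the overtwisted category the fill always exists and is unique up to the expected homotopy. Promoting this filling lemma to work parametrically --- so that it produces not just a contact structure but a contractible choice, compatibly over families --- is the technical heart of the argument; the surrounding handle-trading and the contractibility of the auxiliary germ spaces are comparatively formal.
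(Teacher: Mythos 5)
The paper gives no proof of Theorem~\ref{otclass} --- it is quoted from Eliashberg's paper \cite{Eliashberg89} --- so your proposal has to be measured against Eliashberg's argument. You reproduce its outer shell correctly: in dimension $3$ formal solutions are just co-oriented plane fields, Gray stability converts paths of contact structures into isotopies, one works rel a fixed model overtwisted disk $\Delta$, and everything reduces to a parametric extension problem over the top cells. But the way you organize the crux step cannot work. You propose to fill each $3$--cell $B$ separately, rel its boundary, with $\Delta$ sitting far away in the $0$--handle, claiming that ``in the overtwisted category the fill always exists and is unique up to the expected homotopy.'' The per-cell problem, however, is purely local: its data are a family of germs near $\partial B$ and the two endpoint fills, and it never sees $\Delta$ at all, since everything is fixed near $\Delta$ and $\Delta\cap B=\emptyset$. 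Consequently, if your per-cell parametric filling lemma were true, the identical induction (relative parametric $h$--principle near the $2$--skeleton, then cell-by-cell filling, then Gray) would apply verbatim to an \emph{arbitrary} pair of contact structures homotopic as plane fields, with no overtwistedness hypothesis anywhere, and would show that any two such structures are isotopic. That conclusion is false: $\xi_{std}$ and the overtwisted structure $\xi_0$ on $S^3$ both have $d_3=0$, hence are homotopic plane fields, but are not isotopic; likewise $T^3$ carries infinitely many pairwise non-isotopic tight structures in a single homotopy class of plane fields. So the per-cell filling is genuinely obstructed, and the phrase ``the overtwisted model supplies the room to absorb this obstruction'' has no content at the level of an individual cell. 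Eliashberg's proof is structured differently precisely for this reason: after making the family contact near the $2$--skeleton, one does \emph{not} fill the cells one at a time; instead all the $3$--cells are joined by tubes to one another \emph{and to the ball containing $\Delta$}, forming a single ball $B'$ with $\Delta\subset B'$, and only that one ball is filled parametrically --- there the endpoint fills really are overtwisted, because they contain $\Delta$, and that is what makes the filling lemma true.

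Second, even granting this corrected reduction, your proposal contains no proof of the filling lemma itself; you explicitly defer it as ``the technical heart.'' That lemma --- non-emptiness and connectedness (in families) of the space of contact structures on a ball with fixed boundary germ, fixed standard germ near $\Delta$ in its interior, and fixed relative formal class --- is essentially the entire content of Eliashberg's paper, proved there by normalizing the characteristic foliations on concentric spheres to an ``almost horizontal'' form and analyzing how these foliations evolve. Moreover, the uniqueness statement you invoke (``overtwisted fills are unique up to the expected homotopy'') is exactly Theorem~\ref{otclass} in its relative form on the ball, so quoting it without an independent proof of the ball case makes the argument circular. Appealing to convex surface theory for the tight case (uniqueness of tight fills, the dividing-set criterion) describes the landscape accurately but proves nothing about the parametric overtwisted statement that your induction needs; it is also anachronistic relative to Eliashberg's 1989 argument, though that is a historical rather than mathematical objection.
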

From Proposition~\ref{gompfclass} and this theorem we know that for each integer $n\in \Z$ there is an overtwisted contact structure $\xi_n$ on $S^3$ with $d_3(\xi_n)=n$. 

Given a null-homologous transverse knot $K$ in a contact 3--manifold $(M,\xi)$ it has a simple topological invariant called the self-linking number. Since $K$ is null-homologous it bounds an embedded surface $\Sigma$ and the restriction of $\xi$ to $\Sigma$ is trivial so we can choose a non-zero section $v$ of $\xi$ restricted to $\Sigma$. Let $K'$ be a copy of $K$ pushed slightly in the direction of $v$. Then the \dfn{self-linking number} of $K$, $sl(K)$, is simply the linking of $K$ and $K'$ (or equivalently the signed intersection of $K'$ and $\Sigma$). One may also see $sl(K)$ as (the negative of) the obstruction to extending the outward pointing vector field along $K$ to a non-zero vector field in $\xi|_\Sigma$. From this description it is clear that $sl(K)$ is independent of choices if $c_1(\xi)=0$. 

Bennequin showed \cite{Bennequin83} that any transverse knot in $(S^3,\xi_{std})$ (which we think of as $\R^3=S^3-\{p\}$ and $\xi_{std}=\ker(dz+r^2\, d\theta)$) can be written as the closure of a braid. If a transverse knot $K$ is written as the closure of an $n$--braid $\sigma$ then 
\begin{equation}\label{slformula}
sl(K)=writhe(\sigma)-n,
\end{equation}
where $writhe(\sigma)$ is the writhe of the obvious projection of the braid. 

One may stabilize a transverse knot. This is a local operation that reduces the self-linking number by 2. If $K$ is represented by an $n$-braid then the transverse stabilization can be seen as a negative braid stabilization, that is add a strand to the braid and multiply the braid word by the inverse of the standard generator $\sigma_n$. 
For more details on this and transverse knots in general see \cite{Etnyre05, Geiges08}.

Recall that a knot $K$ in a contact 3--manifold $(M,\xi)$ is Legendrian if it is everywhere tangent to $\xi$. We assume that $K$ is null-homologous and so has a canonical (Seifert) framing. The contact structure also gives $K$ a framing and the difference between this and the Seifert framing is an integer called the \dfn{Thurston-Bennequin invariant} of $K$ and denoted $tb(K)$. Orienting $K$ we can discuss the Euler class of $\xi$ relative to an oriented vector field along $K$. Evaluating this on the Seifert surface results in an integer that is the \dfn{rotation class} of $K$, which is denoted by $r(K)$. It is well known that Legendrian knots in the standard contact structure on $\R^3$ (or $S^3$) can be represented by their \dfn{front projection}, see \cite{Etnyre05, Geiges08}. Moreover, a Legendrian knot $K$ can be \dfn{stabilized} in a positive and a negative way which we denote $S_+(K)$ and $S_-(K)$, respectively. In the front projection this just amounts to ``adding zigzags'' and we know $tb(S_\pm(K))=tb(K)-1$ and $r(S_\pm(K))=r(K)\pm 1$. 

Given $K$ in $(M,\xi)$ one can perform $tb(K)\pm1$ surgery on $M$ to get a manifold $M_K(\tb(K)\pm 1)$ and there is a unique contact structure $\xi'$ on it that agrees with $\xi$ on the complement of the surgery torus and is tight on the surgery torus. We say $(M_K(\tb(K)\pm 1),\xi')$ is obtained from $(M,\xi)$ by \dfn{($\pm 1$)-contact surgery} on $K$. We also call $(-1)$-contact surgery \dfn{Legendrian surgery}. The main result we will need below is the following.
\begin{thm}[Eliashberg 1990, \cite{Eliashberg90a}; Gompf 1998, \cite{Gompf98}]\label{buildstein}
Given a Legendrian link $K_1\cup\ldots\cup K_n$ in $(S^3,\xi_{std})$, then the manifold $X$ obtained from $B^4$ by attaching 2--handles to the link with framings $tb(K_i)-1$ has the structure of a Stein domain with first Chern class
\[
c_1(X)=\sum_{i=1}^n r(K_i) h_i,
\]
where  $h_i$ is Poincar\'e dual to the co-core of the handle attached to $K_i$. Moreover, the complex tangencies to the boundary give a contact structure obtained from $(S^3,\xi_{std})$ by Legendrian surgery on the link. 
\end{thm}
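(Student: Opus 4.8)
The plan is to build $X$ by handle attachment, starting from the standard Stein structure on $B^4$ and adding one Stein $2$-handle per component of the link, and then to read off both $c_1(X)$ and the boundary contact structure from this construction. First I would recall the base case: the closed unit ball $B^4\subset\C^2$ is a Stein domain, since the plurisubharmonic function $\phi=|z|^2$ has $\xi_{std}$ as its field of complex tangencies on $\partial B^4=S^3$ and its gradient-like (Liouville) vector field points transversally outward. Every handle will be attached on top of this.

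The central step is \emph{Eliashberg's handle-attachment theorem}. Given a Stein domain $X_0$ with boundary $(M,\xi)$ and a Legendrian knot $K\subset(M,\xi)$, one attaches a $2$-handle along $K$ so that $\phi$ extends as a plurisubharmonic function acquiring a single new index-$2$ critical point. The standard local model in $\C^2$ forces two conditions on the attachment: the attaching circle must be Legendrian (isotropic for $\xi$), and the framing must be exactly one less than the contact framing, i.e. $tb(K)-1$ relative to the Seifert framing. Since the components $K_1,\dots,K_n$ form a link, their attaching solid tori may be taken disjoint, so all handles can be attached at once, producing the Stein structure on $X$. I expect this to be the main obstacle: verifying that $\phi$ genuinely extends over the handle with the claimed critical-point structure, and that the Legendrian condition together with the framing $tb-1$ are forced rather than merely possible, is precisely the technical heart of Eliashberg's theorem.

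With a Stein, hence complex, hence almost complex, structure in hand, I would compute $c_1(X)=c_1(TX)$ by obstruction theory adapted to the handle decomposition. Since $X$ is homotopy equivalent to $\bigvee_{i=1}^n S^2$, we have $H^2(X;\Z)\cong\Z^n$ generated by the $h_i$, where $h_i$ evaluates to $\delta_{ij}$ on the generator $[\Sigma_j]\in H_2(X;\Z)$ formed by capping the core of the $j$-th handle with a Seifert surface of $K_j$ pushed into $B^4$. Trivializing the complex determinant line bundle $\det_\C TX$ over the $1$-skeleton (here just $B^4$), the obstruction to extending this trivialization across the core of the $i$-th handle is the winding of the tangent vector field of $K_i$ inside the contact plane field $\xi$, measured relative to the $B^4$-trivialization; this winding is by definition the rotation number $r(K_i)$. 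Hence $\langle c_1(X),[\Sigma_i]\rangle=r(K_i)$, which gives $c_1(X)=\sum_{i=1}^n r(K_i)h_i$.

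Finally, to identify the boundary I would appeal to the local model once more. Away from neighborhoods of the $K_i$ the contact structure on $\partial X$ agrees with $\xi_{std}$, while near each $K_i$ the handle attachment removes a solid torus and reglues by the standard Stein recipe. This regluing is exactly the $(-1)$-contact surgery (Legendrian surgery) operation on $K_i$, and the complex tangencies to $\partial X$ realize the resulting contact structure. Therefore $\partial X$ carries the contact structure obtained from $(S^3,\xi_{std})$ by Legendrian surgery on $K_1\cup\dots\cup K_n$, completing the argument.
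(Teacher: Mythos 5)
The paper does not prove this statement: it is quoted as a known result, with the existence of the Stein structure attributed to Eliashberg \cite{Eliashberg90a} and the Chern class formula and handlebody framework to Gompf \cite{Gompf98}, so there is no internal argument to compare yours against. Judged on its own, your outline is a correct reconstruction of the standard proof from those references. You correctly identify the three ingredients: (1) Eliashberg's handle-attachment theorem, which is indeed where the Legendrian condition and the framing $tb(K_i)-1$ (contact framing minus one) are forced by the local model for extending the plurisubharmonic function; (2) the obstruction-theoretic computation of $c_1$, where the point is that $\det_\C TX$ is trivialized over $B^4$ (using a global trivialization of $\xi_{std}$, which exists since $c_1(\xi_{std})=0$, together with the trivial complex normal direction along $S^3$), and the discrepancy with the model trivialization over the $i$-th handle, evaluated on the capped Seifert surface $[\Sigma_i]$, is exactly the winding of $TK_i$ in $\xi_{std}$, i.e. $r(K_i)$ --- this is Gompf's Proposition~2.3; and (3) the identification of the boundary contact structure with the result of $(-1)$-contact surgery, which follows from the same local model. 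Your honest flagging of Eliashberg's extension theorem as the technical heart, rather than pretending to prove it, is the right call: any self-contained treatment of that step would amount to reproving \cite{Eliashberg90a}. One small point worth making explicit if you were to write this up fully: the pairing $\langle h_i,[\Sigma_j]\rangle=\delta_{ij}$ you use is the statement that $h_i$ is Lefschetz dual to the co-core disk in $H^2(X;\Z)\cong H_2(X,\partial X;\Z)$, which matches the normalization in the theorem.
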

Recall a Stein manifold is a complex manifold with a proper embedding in $\C^N$ for some large $N$. The sub-level set of a regular value of the restriction of the radial function on $\C^N$ to the Stein manifold will be called a Stein domain. It is well known that the contact structure induced on the boundary of a Stein domain is tight. 

It is also shown in \cite{Gompf98} how to compute the $\Gamma$ invariant of contact structures obtained through Legendrian surgery. To state this recall, see \cite{Gompf98}, that if $M$ is obtained from $S^3$ by surgery on some link $L=K_1\cup,\ldots, \cup K_n$, with surgery framing $a_i$ on link component $K_i$,  then spin structures on $M$ are in one-to-one correspondence with characteristic sub-links of $L$. A sub-link $L'$ of $L$ is called characteristic if for each $K_i$ in $L$ we have $a_i\equiv\text{ linking} (K_i,L')  \mod 2$. Moreover, if $\gamma_1,\ldots, \gamma_k$ represent a basis for the homology of $M$ then a spin structure is characterized by specifying the framings (modulo $2$) on the $\gamma_i$ with which we can attach a 2--handle and extend the spin structure over the handle. If a spin structure is given by a characteristic sub-link $L'$ then this framing is given by $\text{ linking} (\gamma_i,L')  \mod 2$.

Now suppose $L=K_1\cup \ldots \cup K_n$ is a Legendrian link in $(S^3,\xi_{std})$ and $(M,\xi)$ the contact manifold obtained by Legendrian surgery on this link. Let $L'$ be a characteristic sub-link of $L$ corresponding to the spin structure $\mathbf s$. Then 
\begin{equation}\label{gammainvt}
\Gamma_\xi({\mathbf s})=\frac 12 \sum_{i=1}^n \left( r(K_i) + \text{linking}(K_i,L')\right) \mu_i,
\end{equation}
where $\mu_i$ is the homology class determined by the meridian of $K_i$. 

\subsection{Overtwisted contact structures in higher dimensions}\label{othigh}
In \cite{BormanEliashbergMurphy15}, Borman, Eliashberg, and Murphy introduced the notion of an overtwisted contact structure in all dimensions. There definition of overtwisted is a bit difficult to state but in \cite{CasalsMurphyPresas}, Casals, Murphy, and Presas gave alternate characterizations of overtwistedness and we present one of those here. 

Consider $P=Z\times D^2$ in $T^*S^{n-1}\times \R^3$ where $Z$ is the zero section of $T^*S^{n-1}$ and $D^2$ is the disk of radius $\pi$ in the $z=0$ plane in $\R^3.$ Let $\xi'=\ker (\lambda+ \cos r\, dz+ r\sin r\, d\theta)$, where $\lambda$ is the Liouville 1--form on $T^*S^{n-1}$ and $(r,\theta,z)$ are cylindrical coordinates on $\R^3$. We call a contact structure $\xi$ on a $(2n+1)$--dimensional manifold $M$ \dfn{overtwisted} if there is an embedding of the germ of the contact structure $\xi'$ along $P$ in $T^*S^{n-1}\times \R^3$ such that the image of $P$ is contained in an open ball in $M$ and the image of an open Legendrian submanifold $Z\times \Lambda_0$, where $\Lambda_0$ is an open leaf of the characteristic foliation of $D^2\subset (\R^3\cap \{r<\pi, z=0\}, \ker (\cos r\, dz+ r\sin r\, d\theta))$, has relative rotation number zero with respect to a punctured Legendrian disk. The image of $P$ is typically called a \dfn{small plastikstufe with spherical core and rotation 0}. See \cite{CasalsMurphyPresas} for more details on the definition. 

We also recall that an \dfn{almost contact structure} on a $(2n+1)$--dimensional manifold $M$ is a reduction of the structure group of the tangent bundle of $M$ to $U(n)\times {\bf {1}}$ and so correspond to sections of the $SO(2n+1)/U(n)$-bundle associated to the tangent bundle of $M$. From this one can see that in dimension 5 the only obstruction to the existence of an almost contact structure on a manifold $M$ is in $H^3(M;\Z)$ and the only obstruction to homotoping one almost contact structure to another is in $H^2(M,\Z)$, see for example \cite[Section~8.1]{Geiges08}.

The main theorems from \cite{BormanEliashbergMurphy15} that we will need are the following.
\begin{thm}[Borman, Eliashberg and Murphy 2014,  \cite{BormanEliashbergMurphy15}]\label{BEM1}
Let $M$ be a $(2n+1)$--dimensional manifold and $A$ a closed subset of $M$. If $\eta$ is an almost contact structure on $M$ that is an actual contact structure on some neighborhood of $A$ then $\eta$ is homotopic rel $A$ to an actual (overtwisted) contact structure on $M$.
\end{thm}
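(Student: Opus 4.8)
The plan is to prove this as an existence $h$-principle via a cell-by-cell extension argument, with all the real work concentrated in a local filling result for overtwisted models. First I would fix a triangulation (or handle decomposition) of $M$ in which $A$ lies in a subcomplex, and arrange that $\eta$ is already a genuine contact structure on an open neighborhood $U$ of $A$. The goal is then to homotope $\eta$, rel $U$ and hence rel $A$, to a genuine contact structure by enlarging the genuine region across cells of increasing dimension, always staying within the prescribed formal (almost contact) homotopy class on the part not yet made genuine. The homotopy-theoretic content is entirely in the top-dimensional step: given a ball $B\cong D^{2n+1}$ with a genuine contact germ already defined near $\partial B$ and an almost contact filling of $B$, produce a genuine contact filling of $B$ that agrees with the given germ near $\partial B$ and lies in the prescribed formal class rel $\partial B$.

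Second, I would isolate the decisive local statement in the language of contact shells. A contact shell is such a pair --- a genuine contact germ on a collar of $\partial B$ together with an almost contact filling --- and I call it \emph{overtwisted} when the collar germ contains a copy of the model $P=Z\times D^2\subset T^*S^{n-1}\times \R^3$ described above. The key claim is that an overtwisted contact shell admits a genuine contact filling, homotopic rel the boundary germ to the given almost contact one. Granting this, the induction runs smoothly: before filling a given top cell I first use the freedom to homotope almost contact structures away from $U$ to plant a genuine copy of the model $\xi'=\ker(\lambda+\cos r\, dz+ r\sin r\, d\theta)$ inside a small interior ball (this is legitimate since any two almost contact fillings of a ball are homotopic, and the model is a genuine contact structure realizing one of them), absorb that ball into the genuine region, and thereafter every shell I must fill is overtwisted.

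The main obstacle is the local filling result for overtwisted shells, and this is where the genuinely new geometry of \cite{BormanEliashbergMurphy15} lives. I would attack it by decomposing $B$ into a \emph{contact shelling}, a sequence of elementary enlargements each adding a handle-like region across which the contact germ is extended one piece at a time using explicit contact Hamiltonians. The model $P$ functions as a reservoir of flexibility: any formal discrepancy that accumulates along the shelling --- the failure of the naive extension to match the prescribed rotation and almost contact data --- can be pushed into a neighborhood of the plastikstufe and cancelled there, because on the model one may freely adjust the relevant rotation number by a contact isotopy supported near $P$. Reducing the general case to this base manipulation of the model overtwisted disk, and checking that every modification respects the prescribed formal homotopy class rel $\partial B$ (and, in the parametric version, continuously in parameters), is the most delicate part and the crux of the whole argument.

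Finally I would assemble the induction: extending across all cells yields a genuine contact structure on $M$. Since the construction never alters $\eta$ on $U\supseteq A$, the result is homotopic to $\eta$ rel $A$; and it is overtwisted by construction because it contains the planted model $P$, which gives the parenthetical conclusion. The relative and, if desired, parametric forms follow by carrying the neighborhood $U$ and the parameter through each step of the extension.
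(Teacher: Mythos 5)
First, a point of context: the paper does not prove this statement at all --- Theorem~\ref{BEM1} is quoted with attribution from \cite{BormanEliashbergMurphy15} and is used only as a black box (in the proofs of Theorems~\ref{embedinOT} and~\ref{looseweek}). So there is no internal proof to compare your proposal against; what you have written is an attempted reconstruction of the Borman--Eliashberg--Murphy argument itself. At the level of architecture your reconstruction is faithful to theirs: reduce, by the open $h$-principle, to filling finitely many top-dimensional holes; isolate the notion of a contact shell; plant an overtwisted model so that every shell one must fill is overtwisted; and prove that overtwisted shells admit genuine fillings.

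As a proof, however, there are two genuine gaps. The first is your justification for planting the model: the claim that ``any two almost contact fillings of a ball are homotopic'' (rel boundary) is false in general. Since almost contact structures are sections of the associated $SO(2n+1)/U(n)$-bundle, fillings of a ball rel boundary are classified by $\pi_{2n+1}(SO(2n+1)/U(n))$, which need not vanish: for $n=1$ this group is $\pi_3(S^2)\cong\Z$, and the resulting ambiguity is exactly the $d_3$ invariant of Section~\ref{htpyclasses} that distinguishes the plane fields $\xi_n$ on $S^3$. To plant a genuine overtwisted model in the prescribed formal class one needs the models to come in a family realizing every formal class rel boundary (this is what the adjustable twisting in the Borman--Eliashberg--Murphy circle models provides), and that must be proved, not assumed; otherwise the final contact structure can land in the wrong homotopy class of almost contact structures, breaking the ``homotopic rel $A$'' conclusion. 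The second and larger gap is that the crux --- an overtwisted shell admits a genuine filling homotopic rel boundary to the formal one --- is simply granted, and the paragraph you offer in its place (decompose into a contact shelling and ``push the discrepancy into a neighborhood of the plastikstufe and cancel it there'') describes what must happen rather than arguing why it can; essentially all of the technical content of \cite{BormanEliashbergMurphy15} is concentrated precisely there. Note also that defining overtwistedness of a shell by the presence of the model $P$ is the later characterization of \cite{CasalsMurphyPresas}; the original argument works instead with quantitative circular models and a domination/embedding argument among shells, which is what allows the induction to close. As it stands, your proposal is a correct roadmap of the known proof with its hardest step left as a black box.
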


\begin{thm}[Borman, Eliashberg and Murphy 2014,  \cite{BormanEliashbergMurphy15}]\label{BEM2}
Let $M$ be a $(2n+1)$--dimensional manifold and $A$ a closed subset of $M$.
If $\xi$ and $\xi'$ are two contact structure on $M$ that agree on an open neighborhood of $A$ and  are overtwisted and homotopic through almost contact structures when restricted to $M-A$, then they are isotopic as contact structure by an isotopy fixed on $A$. In particular, there is a contactomorphism from $\xi$ to $\xi'$ that is the identity map on $A$. 
\end{thm}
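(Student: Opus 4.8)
The plan is to reduce the statement to the existence $h$--principle of Theorem~\ref{BEM1} together with Gray's stability theorem. The key observation is that if I can connect $\xi$ to $\xi'$ through a \emph{path} of genuine contact structures $\{\xi_t\}_{t\in[0,1]}$ with $\xi_0=\xi$, $\xi_1=\xi'$, and with each $\xi_t$ agreeing with $\xi$ on a fixed neighborhood of $A$, then the usual Moser/Gray argument produces an ambient isotopy $\phi_t$ of $M$ with $(\phi_t)_*\xi=\xi_t$; since the path is constant near $A$, this isotopy can be taken to fix $A$ pointwise, and $\phi:=\phi_1$ is the desired contactomorphism with $\phi|_A=\mathrm{Id}$ and $\phi_*\xi=\xi'$. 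Thus the entire problem is to upgrade the given homotopy of almost contact structures to a homotopy through honest contact structures, rel $A$ and rel endpoints.

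To carry this out I would view the problem parametrically. The hypothesis gives a homotopy $\{\eta_t\}$ of almost contact structures from $\xi$ to $\xi'$, with $\eta_0,\eta_1$ genuine contact structures that agree on a neighborhood $U$ of $A$, and with the whole family fixed on $U$. I regard this as a single almost contact structure on the thickened family and set $A'=(M\times\{0,1\})\cup(U\times[0,1])$, a closed set on which the family is already an honest (overtwisted) contact structure. Applying the relative, one--parametric version of Theorem~\ref{BEM1} rel $A'$ then deforms $\{\eta_t\}$, without changing it on $A'$, to a family $\{\xi_t\}$ of genuine contact structures with the prescribed endpoints $\xi_0=\xi$, $\xi_1=\xi'$ and constant near $A$. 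Feeding this path into the reduction of the previous paragraph completes the argument.

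The main obstacle is exactly this upgrade from the $0$--parametric existence statement of Theorem~\ref{BEM1} to the $1$--parametric relative statement just used: an arbitrary homotopy of almost contact structures passes through structures that are neither contact nor overtwisted, so the overtwisted $h$--principle cannot be applied naively across the family. The genuine content here is the full parametric $h$--principle of Borman--Eliashberg--Murphy, of which Theorem~\ref{BEM1} is only the $0$--parametric shadow. To invoke it one must first arrange that a fixed overtwisted germ (a small plastikstufe with spherical core and rotation $0$, as in the definition above) persists uniformly along the entire family inside $M-A$ --- this is where the hypothesis that $\xi$ and $\xi'$ are overtwisted on $M-A$ is used --- and then run the relative $h$--principle rel the overtwisted region together with $A'$. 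Granting the parametric $h$--principle, the remaining steps (the Gray--stability reduction and the bookkeeping needed to keep everything fixed on $A$) are routine.
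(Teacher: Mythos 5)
This theorem is not proved in the paper: it is quoted as a black box from \cite{BormanEliashbergMurphy15} and then invoked in the proofs of Theorems~\ref{embedinOT} and~\ref{looseweek}, so there is no internal proof to compare your proposal against. Judged on its own, your outline correctly reproduces the logic by which the result follows from the Borman--Eliashberg--Murphy machinery: the relative one-parametric $h$-principle applied rel $A'=(M\times\{0,1\})\cup(U\times[0,1])$ upgrades the almost contact homotopy to a path of genuine contact structures with the given endpoints and constant near $A$, and Gray stability --- with contact forms chosen constant near $A$, so that the Gray vector field vanishes there --- converts this path into an ambient isotopy fixing $A$ pointwise, whence the contactomorphism. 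Both reductions are sound, and you are right both about where overtwistedness enters (a common overtwisted germ must be arranged to persist along the whole family inside $M-A$) and that the parametric statement cannot be extracted from the purely existential Theorem~\ref{BEM1}. For that very reason, however, your proposal is a reduction to the cited literature rather than a proof: the one-parametric relative $h$-principle you ``grant'' is precisely the content of the theorem being cited, so nothing essentially new is established --- which is acceptable here only because the statement itself is an attribution. Two minor points to tighten if you want the reduction airtight: the theorem as stated assumes only that the homotopy of almost contact structures exists on $M-A$, whereas your argument (like the statement actually proved in \cite{BormanEliashbergMurphy15}) uses a homotopy fixed on a neighborhood $U$ of $A$, so you should either justify this upgrade or note that the rel-$A$ reading is the intended one; and the Gray-type flow must be complete to integrate to time one, which requires $M$ closed (as in the paper's applications) or some compact-support control on the path.
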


The first theorem says there is an overtwisted contact structure on $S^5$ and computations of the set of almost contact structures on $S^5$, see \cite{BormanEliashbergMurphy15}, together with the second theorem says there is the unique overtwisted contact structure up to isotopy, we denote it by $\xi_{ot}$. 

The above two theorems imply that questions about contact embeddings into overtwisted contact manifolds reduce to questions about smooth embeddings and almost contact structures, the latter is a problem in algebraic topology that can frequently be solved. 
From this observation one may prove Theorem~\ref{embedinOT} which says a contact structure on a 3--manifold embeds in $(S^5,\xi_{ot})$ if and only if its first Chern class vanishes. 
\begin{proof}[Proof of Theorem~\ref{embedinOT}]
Given a contact structure $\xi=\ker \alpha$ on a 3--manifold $M$ with $c_1(\xi)=0$ and an embedding of $M$ into the $5$--ball $B^5$, Kasuya in \cite{Kasuya16} shows how to extend the contact structure $\ker (\alpha + r^2\, d\theta)$ on the neighborhood $M\times D^2$ of $M$ in $B^5$ to an almost contact structure on $B^5$. 
Since $S^5$ is obtained from $B^5$ by attaching a $5$--handle and $\pi_4(SO(5)/U(2))=0$, the almost contact structure extends over $S^5$. Thus Theorem~\ref{BEM1} allows us to homotope this almost contact structure relative to a neighborhood of $M$ to an actual overtwisted contact structure, that is $\xi_{ot}$. Thus creating a contact embedding of $(M,\xi)$ into $(S^5,\xi_{ot})$. 
\end{proof}

Recall that Theorem~\ref{looseweek} says that up to contactomorphism the only invariants of a loose transverse contact embedding are the smooth isotopy class and the induced contact structure. 
\begin{proof}[Proof of Theorem~\ref{looseweek}]
Let $e_i\colon (S^3,\xi)\to (S^5,\xi_{ot}), i=1,2,$ be two (smoothly) isotopic contact embeddings whose images have overtwisted complements. Since they are isotopic (and have 2 dimensional trivial normal bundles) we can use Proposition~\ref{nbhdprop} to find a smooth isotopy $\phi_t\colon S^5\to S^5$ such that $\phi_1$ is a contactomorphism from a neighborhood $N_1$ of the image of $e_1$ to a neighborhood $N_2$ of the image of $e_2$  and so that $\phi_1\circ e_1=e_2$. Since $\overline{S^5-N_i}$ is a homology $S^1\times D^4$  we know that $H^2(\overline{S^5-N_i}, \partial(\overline{S^5-N_i});\Z)=0$. Thus from the discussion of homotoping almost contact structures above we see that $(\phi_1)_*\xi_{ot}$ is homotopic to $\xi_{ot}$ on $S^5-N_2$. By Theorem~\ref{BEM2} we see that they are isotopic relative to $N_2$. Now Gray's theorem gives an isotopy $\psi_t$, rel $N_2$, such that $\phi=(\psi_1\circ\phi_1)$ is a contactomorphism of $\xi_{ot}$ and satisfies $\phi\circ e_1=e_2$. 
\end{proof}

\subsection{Open book decompositions}\label{obd}
Let $M$ be a closed $n$--dimensional manifold. An \dfn{open book decomposition} of $M$ is a pair $(B,\pi)$ where $B$ is a closed $(n-2)$--dimensional submanifold of $M$ and $\pi\colon (M- B)\to S^1$ is a locally trivial fibration such that $\pi^{-1}(\theta)$, $\theta\in S^1$, is the interior of a compact hypersurface $\Sigma_{\theta}$ in $M$ and $\partial \Sigma_{\theta}=B$. We call $B$ the \dfn{binding} and each $\overline{\pi^{-1}(\theta)}$ a \dfn{page} of the open book decomposition $(B,\pi)$.

Following Giroux \cite{Giroux02} we say a contact structure $\xi$ on $M$ is \dfn{compatible} with, or \dfn{supported} by, the open book decomposition $(B,\pi)$ if $\xi$ is isotopic to a contact structure defined by the kernel of a 1-form $\alpha$ such that $\alpha$ is a contact form on $B$ and $d\alpha$ is a symplectic form on each page of the open book. (We notice that $\alpha$ will orient $B$ and $d\alpha$ will orient the pages of the open book, we require that, with these orientations, $B$ is the oriented boundary of the pages.) 
\begin{example}\label{stdobd}
The standardly embedded $S^{n-2}$ in $S^n$ is the binding of an open book and when $n$ is odd this open book supports the standard contact structure $\xi_{std}$. 
\end{example}

\begin{thm}[Thurston-Winkelnkemper 1975, \cite{ThurstonWinkelnkemper75} and Giroux 2002, \cite{Giroux02}]
An open book decomposition of a closed 3--manifold supports a unique contact structure up to isotopy. 
\end{thm}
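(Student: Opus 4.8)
This statement combines two classical results: the existence of a compatible contact structure (Thurston--Winkelnkemper) and its uniqueness up to isotopy (Giroux). The plan is to treat these two halves separately, building the form by hand for existence and using a convexity-plus-Gray argument for uniqueness.

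For existence, I would construct a compatible contact form directly from the page-and-monodromy data. Write a page as a compact surface $\Sigma$ with $\partial\Sigma=B$ and let $\phi\colon\Sigma\to\Sigma$ be the monodromy, which I may assume is the identity near $\partial\Sigma$. First choose a $1$--form $\lambda$ on $\Sigma$ such that $d\lambda$ is a positive area form and $\lambda=(C+s)\,d\theta$ in collar coordinates $(s,\theta)$ near each boundary circle. On the mapping-torus region $\Sigma\times[0,1]/(x,1)\sim(\phi(x),0)$ set $\lambda_t=(1-t)\lambda+t\,\phi^*\lambda$, which still satisfies $d\lambda_t>0$, and consider $\alpha=\lambda_t+K\,dt$. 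Since $\lambda_t\wedge d\lambda_t=0$ on the surface factor, one finds $\alpha\wedge d\alpha=(K\,d\lambda_t-\lambda_t\wedge\dot\lambda_t)\wedge dt$ up to sign, so for $K$ large this is a positive volume form. Near $B$ I would glue in a standard binding model on $B\times D^2$ (with pages the rays $\{\theta=\text{const}\}$ and a form like $\lambda_B+r^2\,d\theta$) and interpolate. By construction $\alpha|_B$ is a positive contact form on $B$ and $d\alpha$ is an area form on each page, so $\xi=\ker\alpha$ is compatible with $(B,\pi)$.

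For uniqueness, suppose $\alpha_0$ and $\alpha_1$ are two contact forms each supported by $(B,\pi)$. The key claim is that the straight-line family $\alpha_s=(1-s)\alpha_0+s\,\alpha_1$ consists of supported contact forms. The supporting conditions are manifestly convex: $d\alpha_s=(1-s)\,d\alpha_0+s\,d\alpha_1$ is a positive combination of area forms on each page, hence again a positive area form, and $\alpha_s|_B$ is a positive combination of positively oriented $1$--forms on $B$. Granting that each $\alpha_s$ is genuinely contact, the path $\xi_s=\ker\alpha_s$ is a family of contact structures, and Gray's stability theorem produces an isotopy of $M$ carrying $\xi_0$ to $\xi_1$, proving uniqueness.

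The main obstacle is precisely the step asserting that the interpolations $\alpha_s$ remain contact, i.e.\ that $\alpha_s\wedge d\alpha_s>0$ everywhere. In the mapping-torus region the computation above shows $\alpha\wedge d\alpha$ is positive provided the component of $\alpha$ along the $S^1$--direction dominates the term $\lambda_t\wedge\dot\lambda_t$; so I would first normalize $\alpha_0$ and $\alpha_1$ to agree with the standard model in a neighborhood of $B$ and rescale the fibration direction so that this domination holds uniformly before interpolating (noting that rescaling and the preliminary normalization stay within supported contact forms, so they may be absorbed into the connecting path). The genuinely delicate region is near $B$ itself, where one must verify that the cross terms in $\alpha_s\wedge d\alpha_s$ do not destroy positivity; using the standard binding model this reduces to a local, two-dimensional positivity computation, after which convexity and Gray's theorem complete the argument.
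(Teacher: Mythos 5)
The paper does not actually prove this statement; it quotes it as a classical result, attributing existence to Thurston--Winkelnkemper and uniqueness to Giroux, so the only meaningful comparison is with those arguments --- and your outline follows them faithfully. Your existence half is the Thurston--Winkelnkemper construction and is correct up to one small point: the linear interpolation $\lambda_t=(1-t)\lambda+t\,\phi^*\lambda$ does not descend \emph{smoothly} to the mapping torus, since continuity at the seam requires only $\lambda_1=\phi^*\lambda_0$ (which holds) but smoothness requires $\dot\lambda_1=\phi^*\dot\lambda_0$, i.e.\ $\phi^*\lambda-\lambda=\phi^*(\phi^*\lambda-\lambda)$, which generally fails; the standard fix is to interpolate via a cutoff $\chi(t)$ constant near $t=0$ and $t=1$. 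Your uniqueness half correctly identifies the real issue --- $\alpha_s\wedge d\alpha_s$ is quadratic in $s$, so convexity of the two supporting conditions does not by itself give contactness --- and your proposed repair is indeed Giroux's, but it remains a sketch at exactly the two points that carry all the weight. First, ``rescale the fibration direction'' must be made into a concrete operation on abstract contact forms: add $K\eta$ to both, where $\eta=\pi^*d\theta$ outside a neighborhood of $B$ and $\eta=g(r)\,d\theta$ with $g(0)=0$ inside it; then $\eta|_B=0$ and $d\eta$ restricts to zero on every page, so the supporting conditions are preserved, while $\eta\wedge d\alpha_s>0$ away from $B$ shows that $(1-s)\alpha_0+s\alpha_1+K\eta$ is contact there once $K$ is large. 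Second, your assertion that $\alpha_0,\alpha_1$ can be normalized to agree with a standard model near $B$ \emph{through supported contact forms} is itself a nontrivial lemma (a Moser-type argument near the binding); it is what makes the interpolation $s$-independent precisely in the region where $\eta$ degenerates and the $K$-term cannot rescue positivity. With those two ingredients supplied, your argument is complete and coincides with the standard proof in the cited sources.
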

The analogous theorem is not true in higher dimensions, but in \cite{Giroux02} Giroux gives conditions which guarantee that a high dimensional open book supports a contact structure. He also has poven the following result.
\begin{thm}[Giroux 2002, \cite{Giroux02}]
Every (co-oriented) contact structure on a closed oriented $(2n+1)$--manifold is supported by some open book decomposition. 
\end{thm}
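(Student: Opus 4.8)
The plan is to build the supporting open book directly out of the contact structure by Donaldson-style approximately holomorphic methods, as carried out by Ibort--Mart\'inez-Torres--Presas and Giroux--Mohsen; the binding will be the zero set of an approximately holomorphic section of a high tensor power of a line bundle, and the fibration will be the phase (argument) of that section. In the lowest dimension $2n+1=3$ one can instead argue combinatorially via contact cell decompositions and convex surface theory, but in higher dimensions the approximately holomorphic machinery seems unavoidable. Fix a contact form $\alpha$ with $\xi=\ker\alpha$, let $R$ be its Reeb field, and choose a complex structure $J$ on the bundle $\xi$ that is compatible with the symplectic form $d\alpha|_\xi$; this makes $\xi$ an almost-complex (CR) bundle with $R$ as a distinguished transverse direction.

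First I would set up the standard rescaled geometry. Take a Hermitian line bundle $E\to M$ equipped with a connection whose curvature is compatible with $d\alpha$ (a prequantization-type bundle), rescale the metric by a growing factor $g_k=k\,g$, and consider sections $s_k$ of $E^{\otimes k}$. A section is \emph{approximately holomorphic} if, in the rescaled geometry, its antiholomorphic derivative $\bar\partial s_k$ (defined using $J$ on $\xi$) is uniformly small compared with $\partial s_k$, with uniform control on higher derivatives as well. Such sections exist by standard peak-section constructions.

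Next comes the heart of the matter: Donaldson's globalization procedure. One perturbs $s_k$, staying inside the approximately holomorphic class, so that it becomes \emph{uniformly transverse to zero} and, at the same time, so that its phase $\pi_k=s_k/|s_k|\colon M\setminus s_k^{-1}(0)\to S^1$ is a submersion with no critical points. Both conditions are produced by patching local perturbations via a quantitative, Sard-type transversality estimate. Granting them, set $B_k=s_k^{-1}(0)$. For $k$ large, uniform transversality to zero makes $B_k$ a smooth codimension-$2$ submanifold on which $\alpha$ restricts to a contact form, so $B_k$ is a contact submanifold and is a candidate binding; near $B_k$ the pair $(B_k,\pi_k)$ is modeled on the standard open book neighborhood. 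Away from $B_k$, the no-critical-point condition makes $\pi_k$ a locally trivial fibration over $S^1$, whose fibers are the pages.

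Finally I would check Giroux's compatibility condition from Subsection~\ref{obd}. Because $s_k$ is approximately $J$-holomorphic, each page is an approximately $J$-complex hypersurface inside $M$, so $d\alpha$ restricts to each page as a positive (symplectic) form and $\alpha$ restricts to $B_k$ as a contact form, with the orientation of $B_k$ as the boundary of the pages matching the conventions there; this is exactly the statement that $(B_k,\pi_k)$ supports $\xi$. The hard part will be the transversality estimates in this non-integrable CR setting: the complex structure $J$ lives only on the hyperplane field $\xi$, the Reeb direction must be controlled by hand, and one needs the ``transverse to $0$'' and ``phase is a submersion'' conditions to hold \emph{uniformly in $k$ and simultaneously}. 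Making Donaldson's algorithm run in the contact category --- rather than replacing it by any soft or $h$-principle argument --- is where essentially all of the content lies.
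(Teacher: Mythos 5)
This theorem is stated in the paper as quoted background (Giroux's ICM address, \cite{Giroux02}); the paper itself gives no proof of it, so there is nothing internal to compare your argument against. Your outline is, however, essentially the actual proof from the literature: the $3$--dimensional case is Giroux's contact-cell-decomposition argument, and the higher-dimensional case is the Giroux--Mohsen construction (see also Ibort--Mart\'inez-Torres--Presas), in which the binding is the zero set of an asymptotically holomorphic section of $E^{\otimes k}$ for a prequantization-type line bundle $E$, the fibration is the phase $s_k/|s_k|$, and compatibility with $\xi$ follows from approximate $J$-holomorphicity of the pages. You have correctly identified where all of the content lies --- the quantitative transversality estimates carried out uniformly in $k$ in the contact (non-integrable CR) setting, with the Reeb direction controlled separately --- and your sketch defers exactly that part; as a proposal it is faithful to the known proof, but one should be aware that those estimates constitute nearly the entire argument and are far from routine.
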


Restricting to dimension 3 we consider the ``extrinsic view" of open book decompositions. Given a pair $(\Sigma, \phi)$ where $\Sigma$ is a surface with boundary and $\phi\colon \Sigma\to \Sigma$ is a diffeomorphism of $\Sigma$ that is equal to the identity near $\partial \Sigma$ then we can build a 3--manifold $M_{(\Sigma,\phi)}$ by gluing copies of $S^1\times D^2$ to the boundary components of the mapping torus  
\[
T_\phi= (\Sigma\times [0,1])/(x,1)\sim (\phi(x),0)
\]
by a diffeomorphisms that sends $S^1\times \{p\}$ to a component of $\partial \Sigma\times\{p'\}$ and $\{q\}\times \partial D^2$ to $\{q'\}\times[0,1]/\sim$. One may easily check that the cores of the glued in tori form a link $B$ that is the binding of some open book $(B,\pi)$ for $M_{(\Sigma,\phi)}$ whose pages are diffeomorphic to $\Sigma$. So according to the above theorems there is a unique contact structure $\xi_{(\Sigma,\phi)}$ on $M_{(\Sigma,\phi)}$ associated to $(\Sigma,\phi)$. We say $(\Sigma,\phi)$ supports a contact structure $\xi$ on a 3--manifold $M$ if there is a contactomorphism from $(M_{(\Sigma,\phi)}, \xi_{(\Sigma,\phi)})$ to $(M,\xi)$.  
See \cite{Etnyre06} for more details. 

Given an open book $(\Sigma,\phi)$ supporting some contact structure $\xi$ on $M$ we can form a new open book by stabilizing. Specifically given a properly embedded arc $\gamma$ in $\Sigma$ let $\Sigma'$ be the result of attaching a 1-handle to $\Sigma$ along $\partial \gamma$. Let $c$ be the embedded curve in $\Sigma'$ obtained by taking the union of $\gamma$ and the core of the added $1$--handle. The open book $(\Sigma', \tau_c\circ \phi)$ is said to be obtained from $(\Sigma,\phi)$ by a \dfn{(positive) stabilization}, where $\tau_c$ is a right handed Dehn twist about $c$. We say $(\Sigma', \tau_c^{-1}\circ \phi)$ is the result of a negative stabilization of $(\Sigma, \phi)$. 

One may check, or see \cite{Etnyre06, Giroux02}, that the smooth manifold described by any stabilization of $(\Sigma,\phi)$ is still $M$. If one does a positive stabilization then the contact structure is also unchanged, but if one does a negative stabilization then the supported contact structure is overtwisted and homotopic to the result of connect summing $(M,\xi)$ with $(S^3, \xi_1)$, where $\xi_1$ is the overtwisted contact structure on $S^3$ with $d_3(\xi_1)=1.$

\subsection{Branched covers}\label{sec:bc}
A map  $p:M\to Y$ is called a \dfn{branched covering} with branch locus $B\subset Y$ if the set of points $\widetilde{B}'$ at which $p$ is not locally injective is precisely the singular locus of $p$ and $B=p(\widetilde{B}')$ is a co-dimension 2 sub-complex of $Y$ such that $p$ restricted to $M-\widetilde{B}$ is a covering map $(M-\widetilde{B})\to (Y-B)$, where $\widetilde{B}=p^{-1}(B)$. Along the top dimensional strata of $\widetilde{B}'$ it is well known that a local model for $p$ is given by 
\[
D^{n-2}\times D^2\to D^{n-2}\times D^2: (x,z)\mapsto (x,z^k),
\]
where we think of $D^2$ as the unit disk in $\C$ and $k$ is an integer larger than 1. We call $k$ the degree, or order, of \dfn{ramification}. Any point outside of $\widetilde{B}'$ is called \dfn{unramified}.  We call the branched cover $n$--fold if the covering map $p$ restricted to $M-\widetilde{B}$ is an $n$--fold covering map. We similarly apply adjectives for covering maps to branched coverings too ({\em e.g.\ }regular, irregular, cyclic, {\em etc.}). An $n$--fold branched covering is called \dfn{simple} the pre-image of any point in $Y$ has either $n$ or $n-1$ points.

In this paper we will restrict ourselves to branched covers where the branch locus $B$ is a smooth submanifold, which implies that $\widetilde{B}$ is too and that $p$ restricted to $\widetilde{B}$ is a covering map. This is a common restriction, but we give the general definition to recall the famous theorem of Alexander \cite{Alexander20} that says a closed oriented $n$--manifold is ($PL$ equivalent to) a covering of $S^n$ branched along the $(n-2)$--skeleton of a standardly embedded $n$--simplex. And there has been much study as to whether the branch locus can be taken to be a smooth submanifold and if so whether it can be assumed to be orientable. In \cite{BersteinEdmonds78} it was shown that the branch locus does not always have to be a smooth submanifold. It is well known  that in dimensions 2, 3 and 4, the branched set can be made smooth, but in dimension 4 one must allow non-orientable surfaces for the branch locus \cite{Piergallini95, Viro84}. We will see in Subsection~\ref{sec:immersions} that one can use the techniques of braided embeddings/immersions to get restrictions on properties of the branch locus necessary to realize certain manifolds. 

There is a well-known construction of contact structures via branched coverings, \cite{Geiges97, Gonzalo87}. We recall the construction here. 
\begin{thm}[Geiges 1997, \cite{Geiges97}, {\"O}zt{\"u}rk and Niederkr{\"u}ger 2007, \cite{OzturkNiederkruger07}]\label{bccontact}
Suppose that $p\colon M\to Y$ is a cover branched along a smooth submanifold $B\subset Y$. Further assume that $Y$ has a contact structure $\xi=\ker \alpha$ such that $B$ intersects $\xi$ transversely and $\xi\cap TB$ is a contact structure on $B$. Then there is a unique (up to isotopy) contact structure $\xi_B$ on $M$ that is given by a contact form $\beta_1$ that can be connected to $\beta_0=p^*\alpha$ by a path $\beta_t$, $t\in [0,1]$, such that $\beta_t$ is a contact form for $t>0$ and $d\!\left(\frac{\partial \beta_t}{\partial t}|_{t=0}\right)$ restricts to a positive form on each (naturally oriented) fiber of the normal bundle of the branch locus in $M$.
\end{thm}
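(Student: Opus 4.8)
The plan is to reduce the global statement to one explicit model computation in a tubular neighbourhood of the upstairs branch locus $\widetilde{B}=p^{-1}(B)$, and then to package existence and uniqueness through Gray stability. First I would fix a convenient contact form near $B$. Since $B$ is a contact submanifold of $(Y,\xi)$ of codimension $2$, its normal bundle is a symplectic (hence complex) line bundle, and the contact neighbourhood theorem (Proposition~\ref{nbhdprop}) lets me write $\alpha=\alpha_B+r^2\,d\theta$ on a disk–bundle neighbourhood of $B$ (up to connection terms pulled back from $B$, which will only contribute higher order along $\widetilde{B}$), where $\alpha_B$ is a contact form on $B$ and $(r,\theta)$ are fibre polar coordinates. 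Over this neighbourhood $p$ is fibrewise the branched model $z\mapsto z^k$ of Subsection~\ref{sec:bc}, so setting $w=\rho e^{i\phi}$ and $z=w^k$ I obtain $\beta_0=p^*\alpha=\alpha_B+k\rho^{2k}\,d\phi$. A direct computation then gives
\[
\beta_0\wedge(d\beta_0)^n = 2nk^2\,\rho^{2k-2}\,\bigl(\alpha_B\wedge(d\alpha_B)^{n-1}\wedge\rho\,d\rho\wedge d\phi\bigr),
\]
so $\beta_0$ is a genuine contact form away from $\widetilde{B}$ but vanishes to order $2k-2$ along it.

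For existence I would take a correction $\gamma$ equal to $\rho^2\,d\phi$ near $\widetilde{B}$, cut off by a function $\chi(\rho)$ so as to be supported in the neighbourhood, and set $\beta_t=\beta_0+t\gamma$. Writing $g(\rho)=k\rho^{2k}+t\chi(\rho)\rho^2$, the same computation yields
\[
\beta_t\wedge(d\beta_t)^n = n\,\tfrac{g'(\rho)}{\rho}\;\alpha_B\wedge(d\alpha_B)^{n-1}\wedge\rho\,d\rho\wedge d\phi ,
\]
and near $\widetilde{B}$ one has $g'(\rho)/\rho\to 2t>0$ as $\rho\to 0$; choosing the cut-off small enough keeps $g'(\rho)/\rho>0$ throughout the transition annulus while $\beta_0$ remains contact outside the support of $\gamma$. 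Thus $\beta_t$ is contact for every $t>0$, and by construction $d\gamma=2\rho\,d\rho\wedge d\phi$ is a positive multiple of the fibre area form, which is exactly the required positivity of $d(\partial_t\beta_t|_{t=0})$ on the naturally oriented normal fibres.

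For uniqueness I would exploit that the positivity condition is \emph{convex}: if $\gamma=\partial_t\beta_t|_{t=0}$ and $\tilde\gamma=\partial_t\tilde\beta_t|_{t=0}$ both have $d(\cdot)$ positive on the fibres, then so does $(1-s)\gamma+s\tilde\gamma$. Setting $B(s,t)=(1-s)\beta_t+s\tilde\beta_t$, the model computation shows that $B(s,t)$ is contact for all small $t>0$, uniformly in $s\in[0,1]$ (by compactness of $M$ and of the interval), since near $\widetilde{B}$ the leading term is again governed by the positive $t\,d\bigl((1-s)\gamma+s\tilde\gamma\bigr)$. Fixing such a small $t_*$, the family $s\mapsto B(s,t_*)$ is a path of contact forms from $\beta_{t_*}$ to $\tilde\beta_{t_*}$, so Gray's theorem makes $\ker\beta_{t_*}$ and $\ker\tilde\beta_{t_*}$ isotopic; and since each of $\{\beta_t\}_{t\in(0,1]}$ and $\{\tilde\beta_t\}_{t\in(0,1]}$ is itself a path of contact forms on the closed manifold $M$, Gray's theorem again identifies $\ker\beta_1\cong\ker\beta_{t_*}$ and $\ker\tilde\beta_1\cong\ker\tilde\beta_{t_*}$, so $\xi_B$ is well defined up to isotopy.

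The main obstacle is the single local computation of the second paragraph: one must see precisely how the order $2k-2$ vanishing of $\beta_0\wedge(d\beta_0)^n$ along $\widetilde{B}$ is overcome by the order zero positive contribution of $t\,d\gamma$ on the normal fibres, and then check that the cut-off does not destroy contactness in the transition annulus. Everything else — the passage from the trivial–normal–bundle model to a general disk bundle, where the connection $1$--form contributes only terms that are higher order along $\widetilde{B}$, and the two applications of Gray stability — is routine.
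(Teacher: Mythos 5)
Your argument is correct in substance and rests on exactly the mechanism the paper uses: $p^*\alpha$ is contact off the ramification locus $\widetilde{B}'$, degenerates along it because $dp$ drops to rank $2n-1$ there, and contactness is restored to first order in $t$ because $n\,p^*(\alpha\wedge(d\alpha)^{n-1})\wedge d\gamma$ is a volume form along $\widetilde{B}'$ whenever $d\gamma$ is an area form on the normal fibres. The difference is one of implementation. The paper never invokes a normal form for $\alpha$ near $B$: it takes an \emph{arbitrary} $1$--form $\eta$ on $M$ whose differential agrees with $d(r^2\beta)$ (with $\beta$ a connection form on the normal circle bundle of $\widetilde{B}'$) only \emph{at points of} $\widetilde{B}'$, and then gets contactness of $p^*\alpha+R\eta$ for all small $R$ by openness and compactness; you instead invoke the contact neighbourhood theorem and the fibrewise model $z\mapsto z^k$ to do an explicit radial computation with $g(\rho)$. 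Your route buys a concrete formula; the paper's buys freedom from the two technical points you have to absorb by hand. Where you genuinely go beyond the paper is uniqueness: the paper only remarks that it is ``proved similarly'' and cites \cite{OzturkNiederkruger07}, whereas your convexity-of-the-positivity-condition argument, combined with Gray stability applied first to $s\mapsto B(s,t_*)$ and then to each path on $[t_*,1]$, is an actual proof of that step, and it is sound (the existence computation applies verbatim to $B(s,t)=\beta_0+t\bigl((1-s)\gamma+s\tilde\gamma\bigr)+o(t)$, uniformly in $s$).

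Two caveats, both repairable. First, for a fixed cut-off your $g'(\rho)>0$ claim does not hold for all $t\in(0,1]$ when $k\geq 2$: the decay of $t\chi(\rho)\rho^2$ across the transition annulus must be dominated by $2k^2\rho^{2k-1}$, which forces the plateau where $\chi\equiv 1$ to be very small relative to the annulus; ``choosing the cut-off small enough'' is the wrong quantifier. The cheap fix is to prove contactness only for $t\in(0,\epsilon]$ and reparametrize, $\hat\beta_t=\beta_{\epsilon t}$, which still satisfies every condition in the statement of Theorem~\ref{bccontact}; this is effectively what the paper does by proving $\alpha_R$ is contact for all sufficiently small $R$. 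Second, the connection terms arising when the normal bundle of $B$ is nontrivial are indeed higher order (they enter with factors $g\sim\rho^2$ or $gg'\sim\rho^3$ against the leading $g'\sim\rho$), so your claim is true, but it is a computation one must actually perform; the paper's formulation with the connection form $\beta$ and a condition imposed only along $\widetilde{B}'$ is designed precisely so that this issue never arises.
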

\begin{proof}
Let $\alpha$ be a contact form for $\xi$. It is clear that $p^*\alpha$ is a contact form in the complement of $\widetilde{B}'$ (recall this is the set of point in $M$ where $p$ is ramified). Let $N$ be a tubular neighborhood of $\widetilde{B}'$ in $M$. This is a $D^2$-bundle over $\widetilde{B}'$. Let $\beta$ be the pull back of a connection 1-form on the circle bundle $\partial N$ to $N$ minus the zero section. Also denote by $r\colon N\to \R$ the radial function on $N$. One may easily check that $r^2\beta$ may be extended to a 1-form on $N$. Now let $\eta$ be any 1-form on $M$ for which $d\eta$ agrees with a positive multiple of $d(r^2\beta)$ along $\widetilde{B}'$. We claim that $\alpha_R=p^*\alpha+R\eta$ is a contact from for all sufficiently small $R>0$. Indeed if $M$ is $2n+1$ dimensional then $\alpha_R\wedge (d\alpha_R)^n$ is
\[
p^*(\alpha\wedge (d\alpha)^n)+ R\left[\left(p^*((d\alpha)^n)\wedge \eta\right) +np^*(\alpha\wedge(d\alpha)^{n-1})\wedge d\eta +\eta'(R) \right],
\]
for some form $\eta'(R)$ each summand of which contains positive powers of $R$.
Clearly the first term is a positive multiple of the volume form on $M$ away from $\widetilde{B}'$, so for $R$ sufficiently small this is a contact form on the complement of any sufficiently small neighborhood of $\widetilde{B}'$. On $\widetilde{B}'$ we know $p$ has rank $2n-1$. Thus along $\widetilde{B}'$ the first two terms vanish. Since $p$ restricted to $\widetilde{B}'$ is a covering map we know $p^*(\alpha\wedge(d\alpha)^{n-1})$ is a volume form on $\widetilde{B}'$. On each fiber of the normal bundle $d\eta$ agrees with a positive multiple of $d(r^2\beta)=2r\, dr\wedge \beta+ r^2 d\beta$ and so is an area form on the fiber at $r=0$. Thus the third term in $\alpha_R\wedge (d\alpha_R)^n$ is positive on $\widetilde{B}'$. Hence $\left(p^*((d\alpha)^n)\wedge \eta\right) +n\left(p^*(\alpha\wedge(d\alpha)^{n-1})\wedge d\eta\right)+ \eta'(R)$ is a volume form on a sufficiently small neighborhood of $\widetilde{B}'$ and so is rescaling by any sufficiently small $R>0$. Thus for all small $R>0$ we have established that $\alpha_R$ is a contact form. 


The uniqueness is proved similarly and a detailed proof can be found in \cite{OzturkNiederkruger07}, we also note that Patrick Massot has shown the authors a proof of a stronger result that the space of contact structures induced on a branched cover is contractible.
\end{proof}

\subsection{Branched covers in dimensions 2 and 3}
We first consider branched covers of $D^2$, thought of as the unit disk in $\R^2$. Fix $n$-points $x_1,\ldots, x_n$ in $D^2$ along the $y$-axis (so their $y$-coordinates are increasing with the index). A $k$-fold cover of $D^2$ branched along the $x_i$'s is determined by the ordinary cover of $D^2-\{x_1,\ldots, x_n\}$ which in turn is determined by the monodromy representation of the cover
\[
m\colon \pi_1(D^2-\{x_1,\ldots, x_n\})\to S_k,
\]
where $S_k$ is the symmetric group on $k$ elements. Specifically given a cover $p\colon \Sigma\to (D^2-\{x_1,\ldots, x_n\})$, then label the points $q_1,\ldots q_k$ lying above the base point $x_0$ of $D^2-\{x_1,\ldots, x_n\}$ and for each $[\gamma]\in  \pi_1(D^2-\{x_1,\ldots, x_n\})$ lift $\gamma$ to a path $\widetilde{\gamma}\colon [0,1]\to \Sigma$ starting at $q_i$ and define $m([\gamma])(i)$ to be the index of $\widetilde{\gamma}(1)$. Since there is a one-to-one correspondence between generators of the free group $\pi_1(D^2-\{x_1,\ldots, x_n\})$ and the points $x_1,\ldots, x_n$, we can describe a cover by labeling the marked points with an element of $S_k$. 
\begin{example}\label{exsurfaces}
We show the 2--fold branched cover of $D^2$ branched along two points on the left hand side of Figure~\ref{fig:brachedex}. On the right hand side we give the 3--fold simple cover of $D^2$ branched along 4 points which results in a planar surface $\Sigma$ with three boundary components. 
\begin{figure}[htb]
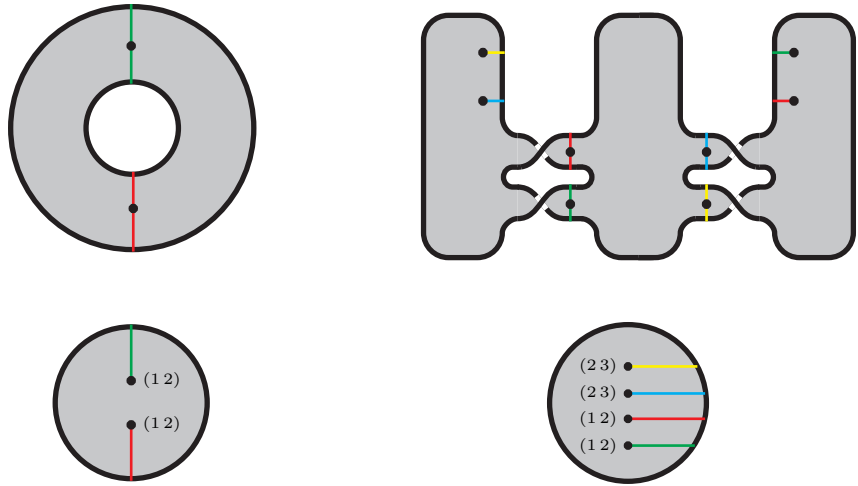
{\tiny
\begin{overpic}
{BranchedEx}
\put(51,21){$(1\, 2)$}
\put(51,38){$(1\, 2)$}
\put(216, 13){$(1\, 2)$}
\put(216, 23){$(1\, 2)$}
\put(216, 33){$(2\, 3)$}
\put(216, 43){$(2\, 3)$}
\end{overpic}}
\caption{The 2--fold branched cover of the annulus over the disk on the left. The permutations labeling the points on the bottom left describe which ``sheets" are connected as one goes around the branched point. So called ``branched cuts" are also draw to aid in visualizing the cover. On the right hand side one sees the 3--fold simple branched cover of a planar surface with three boundary components $\Sigma$ over the disk. }
\label{fig:brachedex}
\end{figure}
\end{example}

We now turn to the 3 dimensional case. As for surfaces a $k$--fold branched covering $p\colon M\to Y$ will be determined by an ordinary covering of the complement of the branch locus $B\subset Y$ which in turn is determined by a monodromy representation $m\colon \pi_1(Y-B)\to S_k$. If we are branching over $S^3$ then $\pi_1(S^3-B)$ is generated by meridians of $B$. So the monodromy just assigns an element of $S_k$ to each strand in a diagram of $B$ so that they respect the ``Wirtinger relations" at the crossings. Moreover such an assignment will define a monodromy and hence a branched cover. 

Now recall that any transverse link $K$ in $(S^3,\xi_{std})$ can be realized as a closed braid \cite{Bennequin83}. In terms of open books this just means that $K$ is transverse to all the pages of the ``standard open book" from Example~\ref{stdobd}. Notice that given a branched cover $p\colon M\to S^3$ branched along $K$ there is an induced branched cover of each page of the open book. So the open book of $S^3$ with $D^2$ pages lifts to an open book of $M$. Conversely, recall a braid can be described by a diffeomorphism $b\colon D^2\to D^2$ with $n$ marked points. Now given a cover $p\colon \Sigma\to D^2$ branched along the marked points and a diffeomorphism $\widetilde{b}\colon \Sigma\to \Sigma$ such that $p\circ \widetilde{b}=b\circ p$, the open book $(\Sigma,\widetilde{b})$ describes a manifold $M$ that is a cover of $S^3$ branched along the closed braid described by $b$. With a little more thought one sees that the contact structure induced on the cover from Theorem~\ref{bccontact} is supported by $(\Sigma,\widetilde{b})$, see \cite{Casey13, Giroux02}. 
\begin{example}
Consider the 2--fold cover in Example~\ref{exsurfaces}. The diffeomorphism $b\colon D^2\to D^2$ that exchanges the two marked points by a right handed half twist is covered by a right handed Dehn twist $\widetilde{b}$ about the core of the annulus. The closure of the braid corresponding to the diffeomorphism $b$ is shown on the left hand side of Figure~\ref{braidex}. 
\begin{figure}[htb]
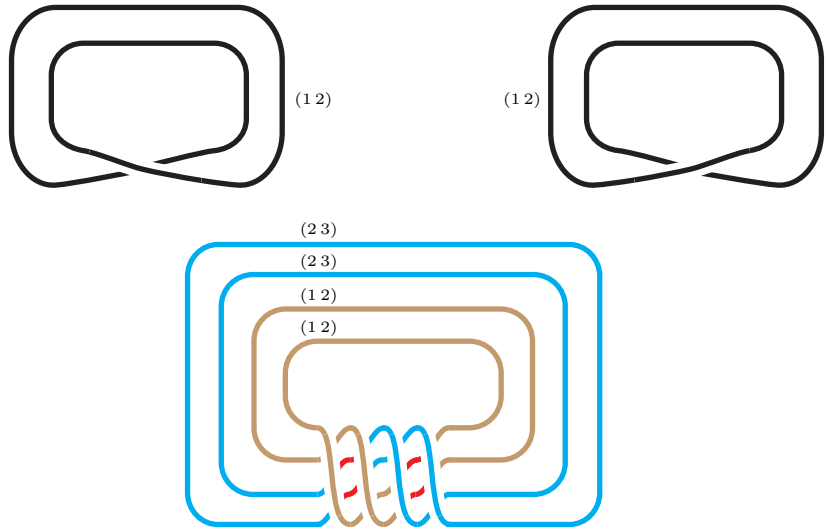
{\tiny
\begin{overpic}
{Braids}
\put(108,160){$(1\, 2)$}
\put(187,160){$(1\, 2)$}
\put(110,74){$(1\, 2)$}
\put(110,86){$(1\, 2)$}
\put(110,99){$(2\, 3)$}
\put(110,111){$(2\, 3)$}
\end{overpic}}
\caption{Upper left is the 2--fold cover branched along the transverse unknot with self-linking $-1$. The upper right is the 2--fold cover branched along the transverse unknot with self-linking $-3$.  The bottom figure describes a 3--fold simple branched cover of $S^3$ yielding $L(3,1)$. The colors on the strands represent elements of $S_3$. Blue represents $(2\, 3)$, brown represents $(1\, 2)$ and red represents $(1\, 3)$.}
\label{braidex}
\end{figure}
Moreover the monodromy describing the corresponding cover is also shown. We thus see that the 2--fold branched cover over the unknot shown in the figure simply yields $S^3$ with the standard contact structure (the given open book is simply a stabilization of the standard disk open book for $\xi_{std}$). If we take $b^{-1}$ then the branched cover will be $S^3$ with the overtwisted contact structure $\xi_1$ since the open book will be a negative stabilization of the standard open book for $S^3$ (see the end of Subsection~\ref{obd}). 
\end{example}
We record an observation from this example for future use.
\begin{lem}\label{lem:ot}
The  cover of $(S^3,\xi_{std})$ branched along the transverse unknot with self-linking $-3$ is the overtwisted contact structure $\xi_1$ on $S^3$ with $d_3(\xi_1)=1$. 
\end{lem}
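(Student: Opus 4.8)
The plan is to recognize this lemma as essentially a bookkeeping consequence of the example immediately preceding it, so that the entire content lies in matching up the conventions for self-linking numbers, half-twists, Dehn twists, and open-book stabilizations.

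First I would identify the branched covering data explicitly. Using the self-linking formula~\eqref{slformula}, the transverse unknot with $sl=-3$ is the closure of the $2$--braid $\sigma_1^{-1}$ (the left-handed, i.e.\ negative, half-twist): since $writhe(\sigma_1^{-1})=-1$ and $n=2$, we get $sl=-1-2=-3$, whereas $\sigma_1$ gives $sl=+1-2=-1$ and recovers $(S^3,\xi_{std})$. The $2$--fold cover of $D^2$ branched over its two marked points is the annulus $\Sigma$, exactly as on the left-hand side of Example~\ref{exsurfaces}.

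Next I would pass to the open-book description of the cover. As explained in the discussion preceding the lemma, a braid $b\colon D^2\to D^2$ fixing the marked points lifts to a diffeomorphism $\widetilde{b}\colon\Sigma\to\Sigma$ with $p\circ\widetilde{b}=b\circ p$, and the contact structure produced on the branched cover by Theorem~\ref{bccontact} is supported by the open book $(\Sigma,\widetilde{b})$. By the observation in Example~\ref{exsurfaces}, the positive half-twist $\sigma_1$ lifts to the right-handed Dehn twist $\tau_c$ about the core $c$ of the annulus, so the negative half-twist $\sigma_1^{-1}$ lifts to $\tau_c^{-1}$. Hence the double branched cover along the $sl=-3$ unknot---which is smoothly $S^3$, being the double branched cover of $S^3$ over the unknot---carries the contact structure supported by $(\Sigma,\tau_c^{-1})=(\text{annulus},\tau_c^{-1})$.

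Finally I would recognize $(\text{annulus},\tau_c^{-1})$ as the negative stabilization of the trivial disk open book $(D^2,\Id)$ supporting $(S^3,\xi_{std})$. By the discussion at the end of Subsection~\ref{obd}, a negative stabilization yields an overtwisted contact structure homotopic to the connect sum of $(S^3,\xi_{std})$ with $(S^3,\xi_1)$; since $(S^3,\xi_{std})$ has $d_3=0$ and the $d_3$--invariant is additive under connect sum, the result is overtwisted with $d_3=0+1=1$, and therefore equals $\xi_1$ by Eliashberg's classification (Theorem~\ref{otclass}) together with Proposition~\ref{gompfclass}. The only real obstacle is keeping the sign conventions consistent across the three translations---negative braid stabilization, negative Dehn twist, and negative open-book stabilization---but each of these is pinned down by the conventions recalled in Subsections~\ref{contactintro} and~\ref{obd}, so the argument reduces to tracking them carefully.
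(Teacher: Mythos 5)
Your proposal is correct and is essentially the paper's own argument: the paper likewise represents the $sl=-3$ unknot as the closure of the negative $2$--braid, lifts the left-handed half twist to the left-handed Dehn twist $\tau_c^{-1}$ about the core of the annulus, and identifies the resulting open book $(\text{annulus},\tau_c^{-1})$ as a negative stabilization of $(D^2,\Id)$, hence supporting the overtwisted $\xi_1$ with $d_3=1$. The only point the paper makes explicit that you leave implicit is the appeal to Eliashberg's classification of transverse unknots, which guarantees that the braid closure you compute with really is \emph{the} transverse unknot of self-linking $-3$, so the computation for that one representative proves the lemma.
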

\begin{proof}
From Equation~\eqref{slformula}  we know the transverse knot on the upper right of Figure~\ref{braidex} has self-linking $-3$. Moreover from \cite{Eliashberg93} it is known there is a unique such transverse knot. Now the computation in the above example yields the result. 
\end{proof}

\begin{example}\label{L31}
Consider now the 3--fold simple cover in Example~\ref{exsurfaces}. The diffeomorphism $b\colon D^2\to D^2$ given by a Dehn twist about a curve parallel to the boundary of $D^2$ lifts to the diffeomorphism $\widehat{b}\colon \Sigma\to \Sigma$ that is simply the composition of Dehn twists about curves parallel to each boundary component. It is well known, see \cite[Figure~3]{Etnyre04b}, that this open book describes the result of Legendrian surgery on the Legendrian unknot with Thurston-Bennequin invariant $-2$ and rotation number $\pm 1$. So the open book supports the lens space $L(3,1)$ with tight contact structure $\xi$ having $c_1(\xi)=\pm 1\in H^2(L(3,1))=\Z/3\Z$.
\end{example}

It is sometimes convenient to make the branch locus of a branched cover connected. We have the following contact version of the well-known result for topological branched covers. 
\begin{lem}[Casey 2013, \cite{Casey13}]\label{lem:connect}
Let $B$ be a transverse link in a contact 3--manifold $(Y,\xi)$ and $p\colon M\to Y$ be a simple cover branched along $B$ inducing the contact structure $\xi'$ on $M$. If part of a diagram for $B$ is as shown one side of  Figure~\ref{connect} then replacing that portion of $B$ with the other diagram shown in the figure will result in a new branched covering of $Y$ that still yields the same contact manifold $(M,\xi')$. \hfill \qed
\end{lem}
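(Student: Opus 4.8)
The plan is to split the lemma into its two assertions: that the modified branch diagram yields the same underlying smooth $3$--manifold $M$, and that the contact structure induced on $M$ via Theorem~\ref{bccontact} is unchanged up to isotopy. The first is the classical topological statement about branched covers and the second is the genuinely contact-geometric content.

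For the topological part I would argue through the monodromy representation $m\colon \pi_1(Y-B)\to S_k$ that determines the cover, as set up in Subsection~\ref{sec:bc}. The replacement in Figure~\ref{connect} is supported in a ball $U$ meeting $B$ in the depicted arcs, and the covers of $Y-U$ agree identically; so it suffices to check that the two local tangles induce the same cover of $U$ rel $\partial U$. This is a finite computation with the transposition labels: one verifies that the product of meridional transpositions along each loop of $\partial(U-B)$ is the same for both pictures (the labels are forced to respect the Wirtinger relations at the crossings), whence the two branched covers of $U$ are isomorphic rel boundary and reglue to the identical $M$. This can be attributed to the standard Montesinos-type move literature.

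For the contact part the key is to stay inside the transverse category and reduce to open books. Since $B$ is transverse it may be taken to be a closed braid (Subsection~\ref{contactintro}), and the local replacement can be arranged inside a Darboux ball so that $B$ remains transverse throughout; as $B$ is $1$--dimensional, transversality is exactly the hypothesis $\xi\cap TB$ needed in Theorem~\ref{bccontact}. As recalled just before this lemma, a simple branched cover over a closed braid lifts the disk open book of $(Y,\xi)$ to an open book $(\Sigma,\widetilde{b})$ on $M$ supporting $\xi'$. The move on $B$ therefore lifts to a local move on $(\Sigma,\widetilde{b})$: joining the two branch arcs by the band attaches a $1$--handle to the page $\Sigma$ and composes the monodromy with a single right-handed Dehn twist about the curve formed by the connecting arc together with the handle core --- that is, it is exactly a \emph{positive} stabilization (possibly preceded by an isotopy of the page). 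Since positive stabilization does not change the supported contact structure (Subsection~\ref{obd}), the induced $\xi'$ is preserved.

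The main obstacle is precisely the \textbf{sign} in this last step: one must confirm that the lifted band move is a positive rather than a negative stabilization, since a negative stabilization would connect-sum $\xi'$ with the overtwisted $(S^3,\xi_1)$ and genuinely change the contact structure. Pinning this down requires matching the handedness of the half-twist in the band against the orientation conventions for the transverse braid and for the induced Dehn twist on the cover, and this is where transversality (braid positivity) of $B$ is essential. An alternative route that sidesteps the explicit stabilization bookkeeping is to invoke the uniqueness clause of Theorem~\ref{bccontact}: because the two branch loci agree outside $U$ and the forms $p^*\alpha$ agree there, it would suffice to show the two induced contact structures on the cover of $U$ agree rel boundary, reducing the entire contact statement to a local model computation on the cover of the ball $U$.
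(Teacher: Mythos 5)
Your overall decomposition (a topological statement plus a contact statement, both localized to the ball $U$) is sound, and your ``alternative route'' at the end is in fact the paper's actual proof --- but as written, both of your routes stop short of the decisive step. The paper disposes of the lemma in one sentence: the branched cover of the ball containing \emph{either} tangle is again a ball, the contact structure induced on it by Theorem~\ref{bccontact} is \emph{tight}, and hence by Eliashberg's uniqueness theorem for tight contact structures on the $3$--ball with fixed boundary data \cite{Eliashberg92a} the two local covers are contactomorphic rel boundary, so the glued-up contact manifolds agree. You state the reduction (``it would suffice to show the two induced contact structures on the cover of $U$ agree rel boundary'') but never supply the two inputs that make it work, namely that the local cover is a ball and that its induced contact structure is tight; without those, the contact statement is not proved.

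The more serious problem is with your main route. The move of Figure~\ref{connect} replaces one $2$--string tangle by another $2$--string tangle with the same labelled endpoints; indeed the Wirtinger arithmetic forces this --- a clasp $\sigma_1^{\pm 2}$ does not even return the labels $(i\,j),(j\,k)$ at the right-hand endpoints, and the smallest power that does is $\sigma_1^{\pm 3}$, which (up to drawing conventions) is what the figure shows, with the intermediate arc carrying $(i\,k)$. Consequently, when the branch locus is braided, it meets every page in the same number of points before and after the move: the page of the lifted open book is \emph{unchanged}, no $1$--handle is attached, and the move is not a stabilization of either sign. What changes is the lifted monodromy, by the lift of the inserted twists; the real content is that this lift is supported in the preimage of a sub-disk of the page --- which is again a union of disks, since the $3$--fold simple cover of a disk branched over two points labelled $(i\,j),(j\,k)$ is a disk --- and any diffeomorphism supported in embedded disks and fixing their boundaries is isotopic to the identity by Alexander's lemma. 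So the ``main obstacle'' you flag (positive versus negative stabilization) is not the right question, and in any case you leave it unresolved, which is exactly the crucial step. A secondary gap: for a general contact $3$--manifold $(Y,\xi)$ your braiding step needs a transverse Alexander theorem relative to an open book supporting $\xi$, which Subsection~\ref{contactintro} does not provide (that discussion concerns only $(S^3,\xi_{std})$); the paper's local tight-ball argument avoids braiding altogether.
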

\begin{figure}[htb]
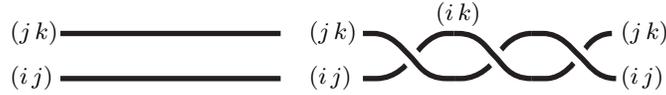
{\small
\begin{overpic}
{connect}
\put(-19,-1){$(i\, j)$}
\put(-19, 16){$(j\, k)$}
\put(94,-1){$(i\, j)$}
\put(94,16){$(j\, k)$}
\put(142, 23){$(i\, k)$}
\put(212, -1){$(i\, j)$}
\put(212, 16){$(j\, k)$}
\end{overpic}}
\caption{Replacing the one diagram in the branch locus of a simple cover with the other does not change the manifold or contact structure described by the branched cover.}
\label{connect}
\end{figure}
The proof of the lemma follows easily by observing that the branched cover of the ball containing either  branched loci is simply a ball and the contact structure on it is tight. See \cite{Casey13} for details. 

We now make a useful observation about branched covers of contact 3--manifolds and stabilizations of transverse knots. 

\begin{prop}\label{stabilize}
Let $p\colon M\to Y$ be a simple branched covering between closed oriented 3--manifolds with branch locus $B\subset Y$. Let $\xi$ be a contact structure on $Y$ and $T$ be a transverse realization of $B$ in $(Y,\xi)$ and $T'$ the stabilization of $T$. The contact structure $\xi_{T'}$ on $M$ is obtained from the contact structure $\xi_{T}$ by connect summing with the overtwisted contact structure $(S^3,\xi_1)$. 

In particular, $\xi_{T'}$ is overtwisted, homotopic to $\xi_{T}$ over the 2--skeleton, and has $d_3$ invariant (when it is defined)
\[
d_3(\xi_{T'})=d_3(\xi_{T})+1.
\]
\end{prop}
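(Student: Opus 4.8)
The plan is to reduce the statement to the purely local model already computed in the example preceding Lemma~\ref{lem:ot}, and then to feed the outcome into the connect-sum behaviour of negative open book stabilizations recorded at the end of Subsection~\ref{obd}. First I would localize. A transverse stabilization of $T$ is supported in a standard contact ball $V\subset (Y,\xi)$ in which $T\cap V$ is a trivial transverse arc, $T'\cap V$ is its stabilization, and $T$ and $T'$ agree outside $V$. Since the branch loci agree on $Y-V$ and the perturbation producing the contact forms in Theorem~\ref{bccontact} can be taken supported in an arbitrarily small neighbourhood of the ramification locus, the contact manifolds $(M,\xi_T)$ and $(M,\xi_{T'})$ are identified away from $p^{-1}(V)$. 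Because the cover is simple, the strand of $T$ through $V$ carries a transposition $(i\,j)$, so $p^{-1}(V)$ is the disjoint union of one ball $\widetilde V$ — the connected double cover of $V$ branched over the arc, involving only the sheets $i$ and $j$ — together with $k-2$ balls mapped homeomorphically. Thus the entire change from $\xi_T$ to $\xi_{T'}$ takes place inside the double-branched-cover ball $\widetilde V$, and the remaining sheets play no role.

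Next I would identify this local change. Inside $\widetilde V$ we are looking precisely at the $2$-fold branched cover of a standard ball over a transverse arc, before and after a negative transverse stabilization; globally this is the passage from the $2$-fold cover of $(S^3,\xi_{std})$ over the transverse unknot of self-linking $-1$ to the $2$-fold cover over the unknot of self-linking $-3$. In open book language the lifted page acquires a single $1$-handle and the monodromy is composed with a negative Dehn twist about the core of that handle, i.e.\ the open book supporting $(M,\xi_T)$ undergoes a single negative stabilization producing the open book supporting $(M,\xi_{T'})$. The example above and Lemma~\ref{lem:ot} are exactly the $2$-fold model for this, and the reduction of the general simple cover to this model is legitimate because only the two sheets $i,j$ interact in $V$. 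Verifying that the transverse stabilization induces \emph{precisely one} negative stabilization of the covering open book — and not several, or one of the wrong sign — is the main point of the argument and the place where the reduction must be carried out with care.

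Finally I would assemble the conclusion. By the behaviour of negative stabilizations recorded at the end of Subsection~\ref{obd}, $\xi_{T'}$ is overtwisted and homotopic to $\xi_T\#\xi_1$; since $\xi_T\#\xi_1$ is also overtwisted, Eliashberg's classification (Theorem~\ref{otclass}) upgrades this homotopy to a contactomorphism, giving that $\xi_{T'}$ is obtained from $\xi_T$ by connect summing with $(S^3,\xi_1)$. The ``in particular'' assertions then follow formally: $\xi_{T'}$ is overtwisted as a connect sum with an overtwisted structure; it is homotopic to $\xi_T$ over the $2$-skeleton because on $S^3$ the structure $\xi_1$ is homotopic to $\xi_{std}$ over the $2$-skeleton (both have vanishing, hence torsion, $c_1$, and $S^3$ has no $2$-torsion so $\Gamma$ is determined by $c_1$); and, when $c_1(\xi_T)$ is torsion so that $d_3$ is defined, the connect-sum formula for $d_3$ from Subsection~\ref{htpyclasses} together with $d_3(\xi_1)=1$ gives $d_3(\xi_{T'})=d_3(\xi_T)+1$.
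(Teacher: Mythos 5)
Your localization step is exactly the paper's: the stabilization is confined to a standard contact ball $V$, and since the cover is simple, $p^{-1}(V)$ consists of one double-branched-cover ball $\widetilde{V}$ together with balls on which nothing changes. But the way you then identify the local change has a genuine gap. The claim that ``the open book supporting $(M,\xi_T)$ undergoes a single negative stabilization producing the open book supporting $(M,\xi_{T'})$'' presupposes (i) an open book of $(Y,\xi)$ with respect to which $T$ is braided, (ii) that the lifted open book supports the branched-cover contact structure $\xi_T$, and (iii) that the transverse stabilization of $T$ lifts to exactly one negative stabilization of that lifted open book. The paper provides (i) and (ii) only for $(S^3,\xi_{std})$, via Bennequin's braiding theorem and the discussion citing \cite{Casey13, Giroux02}; the proposition, however, concerns an arbitrary closed contact $3$--manifold $(Y,\xi)$, where braiding a transverse link about a supporting open book (and the corresponding Markov-type statement that transverse stabilization becomes a negative braid stabilization) is a nontrivial theorem that is neither in the paper nor supplied by you. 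Moreover you explicitly set aside the verification of (iii), calling it ``the main point'' --- so the step carrying all the weight of the argument is asserted rather than proved.

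The irony is that your own first step makes the open-book detour unnecessary, and this is precisely how the paper concludes. Since $\xi_T$ and $\xi_{T'}$ can be taken to agree outside $\widetilde{V}$, one only has to identify the two contact balls sitting over $V$: the double cover of $(V,T\cap V)$ is a standard contact ball, and the double cover of $(V,T'\cap V)$ capped off with a standard contact ball is, by Lemma~\ref{lem:ot} (your $S^3$ model), the $2$--fold cover of $(S^3,\xi_{std})$ branched over the self-linking $-3$ unknot, namely $(S^3,\xi_1)$. Replacing a standard ball of $(M,\xi_T)$ by the complement of a standard ball in $(S^3,\xi_1)$ is by definition forming $\xi_T\#\xi_1$, so the contactomorphism $\xi_{T'}\cong\xi_T\#\xi_1$ falls out directly, with no appeal to the negative-stabilization fact from Subsection~\ref{obd} or to Eliashberg's classification (Theorem~\ref{otclass}). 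Your closing deductions --- overtwistedness of the connect sum, agreement with $\xi_T$ over the $2$--skeleton, and $d_3(\xi_{T'})=d_3(\xi_T)+1$ from additivity of $d_3$ and $d_3(\xi_1)=1$ --- are correct and agree with the paper.
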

\begin{proof}
The stabilization of $T$ can be done in a small neighborhood $N$ of a point on $T$ that only intersects $T$ in one arc and we can assume the contact structure on $N=D^2\times [-1,1]$ is given by $\ker(dz+r^2\, d\theta)$ and $T\cap N$ is $\{(0,0)\}\times [-1,1]$. Since $p\colon M\to Y$ is a simple cover, say an $n$--fold cover, the inverse image $p^{-1}(B)$ consists of $n-1$ balls $B_1,\ldots B_{n-1}$ and $p$ restricted to $B_2,\ldots B_{n-1}$ is a diffeomorphism. So the contact structure on each $B_i, i=2,\ldots, n-1$ is standard. The restriction of $p$ to $B_1$ is a 2--fold branched cover branched along the arc $T\cap N=\{0,0\}\times [-1,1]$. This is easily seen to be the standard contact structure on the ball too. When branching over $T'$ instead of $T$ there is no change in the contact structure on $M$ outside of the $B_i$ and on $B_i$ for $i=2,\ldots n-1$. 

We are left to determine the contact structure on $B_1$. To this end let $B$ be the 3--ball with its standard contact structure. We can glue $N$ and $B$ together to obtain $S^3$ with its standard contact structure. Moreover there is an arc $c$ in $B$ such that $T\cap N$ can be completed by $c$ to be the unknot with self-linking $-1$ and $T'\cap N$ can be completed by $c$ to the unknot with self-linking number $-3.$ Now the 2--fold cover of $(S^3,\xi_{std})$ branched along the unknot with self-linking number $-3$ is the overtwisted contact structure $\xi_1$ by Lemma~\ref{lem:ot}, and can be written as the union of the 2--fold cover of $N$ over $T'\cap N$ and the 2--fold cover of $B$ over $c$. Of course the contact structure on the first ball is the contact structure on $B_1$ induced from the covering map $p$ when branched along $T'$ and the contact structure on the second ball is standard. Thus we see that $\xi_{T'}$ is obtained from $\xi_T$ by connect summing with $(S^3,\xi_1)$ and the computation of $d_3(\xi_{T'})$ follows from the additivity of the $d_3$ invariant since $d_3(\xi_1)=1$.
\end{proof}

\section{Topological braiding in dimensions high and low}

In this section we explore braided embeddings and braided immersions in the first two subsections and prove the existence of certain braidings and give obstructions to others. We also see how to use braided embeddings to obstruct the branch locus of certain coverings of $S^n$ from being too simple. In the last subsection we generalize the notion of braiding and put it in a larger context. 
\subsection{Braided embeddings}
Given an $n$--manifold $Y$, a \dfn{braid about $Y$} is an embedding of an $n$--manifold $M$ into $Y\times D^2$ 
\[
e\colon M\to Y\times D^2
\]
such that $\pi\circ e\colon M\to Y$ is a branched covering map, where $\pi\colon Y\times D^2\to Y$ is projection onto the first factor. Recall that our standing assumption from Section~\ref{sec:bc} is that the branched locus of our covers will always be a submanifolds. Moreover we say a branched covering $p\colon M\to Y$ can be braided about $Y$ if there is a function $f\colon M\to D^2$ such that
\[
e\colon M\to Y\times D^2:x\mapsto (p(x), f(x))
\]
is an embedding (and hence exhibits $M$ as a braid about $Y$). If $Y$ is embedded in a $(n+2)$--manifold $W$ with trivial normal bundle and $M$ is braided about $Y$ then clearly $M$ also embeds in $W$ and this is called a \dfn{braided embedding of $M$ into $W$} (braided about $Y$). We will sometimes abuse terminology and refer to a braided embedding of $M$ as a realization of $M$ as a braid about $Y$ or as the embedding into some other ambient space $W$ as above.

Of course a given manifold $M$ can, potentially,  be braided about $Y$ in many different ways. Notice that when $n=1$ then the branched cover of a 1--manifold is an actual covering map (since the branch locus must be co-dimension 2). Thus a braid about $S^1$ is an embedding of $S^1$ into $S^1\times D^2$ that is transverse to $\{p\}\times D^2$ for all $p\in D^2$. That is our notion of braiding coincides with the ordinary notion of a closed braid in dimension 3. 

When the branched cover corresponding to a braiding of $M$ about $Y$ has a property, such as being simple or cyclic, we will use the same adjective to describe the braiding, for example we will refer to a ``simple braiding" when $M$ is braided about $Y$ so that the corresponding branched covering is simple. 

It is interesting to consider when a given branched covering map $p\colon M\to Y$ can be realized by a braiding of $M$ about $Y$. This question has been addressed in \cite{CarterKamada12, CarterKamada15b, CarterKamada15} and in particular in \cite{CarterKamada15b} an example was given showing that not all branched covers can be so realized. In Example~\ref{nobraid} below we use contact geometry to give an infinite family (and a recipe for constructing more infinite families) of examples of branched covers that cannot be realized as a braid about $S^3$.  But for now we start by observing there are branched covers that can always be realized as braidings.  

\begin{thm}[Hilden 1978, \cite{Hilden78}]\label{cyclicbraid}
Let $p\colon M\to Y$ be a cyclic branched cover between closed oriented $n$--manifolds with branch locus $B\subset Y$ a closed, orientable, and null-homologous submanifold. Then there is a function $f\colon M\to D^2$ so that 
\[
e\colon M\to Y\times D^2:x\mapsto (p'(x), f(x))
\]
exhibits $M$ as a braid about $Y$, where $p'$ is homotopic to $p$ through cyclic branched covers. 
\end{thm}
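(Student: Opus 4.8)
The plan is to build the function $f$ by hand from the geometry of the cyclic cover, exploiting that an order-$k$ cyclic cover is controlled by a single ``angular'' function on $Y$ coming from a Seifert hypersurface for $B$.

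First I would use that $B$ is orientable and null-homologous to produce a Seifert hypersurface: a compact oriented codimension-one submanifold $\Sigma\subset Y$ with $\partial\Sigma=B$, together with its Poincar\'e--Lefschetz dual, a smooth map $\phi\colon Y-B\to S^1$ sending the meridian of $B$ to a generator of $\pi_1(S^1)=\Z$ and equal, in a tubular neighborhood of $B$ (where the oriented normal bundle gives fibre coordinates $z=re^{i\theta}$), to the fibrewise angle $\phi=e^{i\theta}$. Since the cover is cyclic with full ramification $k$ along $B$, its monodromy is a homomorphism $\pi_1(Y-B)\to\Z/k\Z$ carrying the meridian to a generator. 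I would then observe that $\phi_*$ reduced mod $k$ is the monodromy of the standard cyclic cover $p'\colon M\to Y$ pulled back from $S^1\to S^1$, $\zeta\mapsto\zeta^k$, so that
\[
M-(p')^{-1}(B)=\{(y,w)\in (Y-B)\times S^1: w^k=\phi(y)\},
\]
with $p'$ the projection to $y$. This $p'$ is homotopic to $p$ through cyclic branched covers; the content is that the two covers agree on meridians and so differ by a class pulled back from $Y$, which can be absorbed by such a homotopy. I expect this reduction to be the main obstacle, and return to it below.

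Granting this model, $w\colon M-(p')^{-1}(B)\to S^1$ is a globally defined smooth function separating the $k$ sheets over each point of $Y-B$, and I would set
\[
f(y,w)=h(y)\,w,
\]
where $h\colon Y-B\to(0,\infty)$ is a positive smooth function, bounded so the image lies in $D^2$, equal to $\delta\, r^{1/k}$ near $B$ (for a small $\delta>0$) and to a positive constant away from $B$. Then $e(q)=(p'(q),f(q))$ is manifestly injective on the sheets over a fixed point (as $h>0$), and $\pi\circ e=p'$ by construction. The key point is smoothness and injectivity across the ramification locus: in the intrinsic normal coordinate $W=r^{1/k}w$ of $M$ along $(p')^{-1}(B)$ --- the coordinate in which the covering map is the local model $(x,W)\mapsto(x,W^k)$ recalled in Section~\ref{sec:bc} --- we have $f=\delta\,r^{1/k}w=\delta\,W$, a genuine smooth coordinate vanishing to first order at $W=0$. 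Hence $e$ is an immersion near $(p')^{-1}(B)$ (the last $D^2$ factor records $\delta W$, whose differential is injective even where $D(W^k)$ degenerates), while away from the branch locus $e$ is a graph and trivially an embedding. Injectivity plus compactness of $M$ upgrades this to an embedding, exhibiting $M$ as a braid about $Y$.

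The main difficulty, as noted, is the reduction to the pulled-back cover $p'$. Geometrically, placing the $k$ sheets at the vertices of a rotating regular $k$-gon requires a global angular phase function on $Y-B$ compatible with the monodromy; the obstruction to its existence is the (torsion) Euler class of an associated flat circle bundle, equivalently the Bockstein of the mod-$k$ monodromy class. Null-homology of $B$ provides the integral lift along meridians, and I would use the freedom to homotope through cyclic branched covers to modify $p$ by the remaining class coming from $H^1(Y;\Z/k\Z)$ --- concretely by cutting along a hypersurface dual to that class and regluing with a twist --- arriving at a cover for which $\phi$ exists and the explicit $f$ above applies. Verifying that this modification is realized by an honest homotopy through cyclic branched covers is the step I would treat most carefully.
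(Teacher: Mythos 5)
Your first three paragraphs are, in different packaging, exactly the paper's construction. The paper builds a single smooth map $h\colon Y\to\C$ with $h^{-1}(0)=B$ cut out transversally and with winding number along any loop equal to its intersection number with the Seifert hypersurface $S$, and then takes $X=\{(x,z)\in Y\times\C : z^n=h(x)\}$, which is an embedded copy of $M$ braided about $Y$ with $f$ the $z$--coordinate. Solving $z^n=h(x)$ gives $z=|h|^{1/n}w$ with $w^n=h/|h|$, which is precisely your $f(y,w)=h(y)w$ with $h=\delta r^{1/k}$ near $B$; the paper's formulation has the minor advantage that smoothness and embeddedness of $X$ along the branch locus follow from the implicit function theorem (since $0$ is a regular value of $h$), with no need to discuss the intrinsic coordinate $W=r^{1/k}w$ of the local model from Section~\ref{sec:bc}.

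The genuine problem is the step you defer to the end. First, it is unnecessary under the paper's reading of the hypothesis: in the paragraph preceding Theorem~\ref{cyclicbraid} the paper \emph{defines} ``cyclic branched cover'' to mean the cover whose monodromy is the composition $\pi_1(Y-B)\to H_1(Y-B;\Z)\to\Z\to\Z/k\Z$, the middle map being intersection with a chosen Seifert hypersurface; with that hypothesis $p$ and your model $p'$ have the same monodromy, hence are equivalent covers, and the proof ends with your third paragraph. Second, under your broader reading (any cover with cyclic monodromy carrying meridians to generators) the deferred step cannot be completed, because the statement is then false. Take $B=\emptyset$ and $p\colon S^3\to L(k,1)$ the universal cover. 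A braiding of a $k$--fold cover assigns to each $y\in Y$ the $k$ distinct points $f(p^{-1}(y))\subset D^2$, i.e.\ a map from $Y$ to the unordered configuration space of $k$ points in $D^2$, which is a $K(B_k,1)$ for the braid group $B_k$; hence the monodromy $\pi_1(Y)\to \Sigma_k$ would factor through $B_k\to\Sigma_k$. Since braid groups are torsion--free, every homomorphism $\Z/k\Z\to B_k$ is trivial, so the (injective) monodromy of the universal cover admits no such factorization, and no cyclic branched cover homotopic to $p$ can be braided (any cyclic branched cover of $L(k,1)$ by $S^3$ is forced to be unbranched, hence is again the universal cover). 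This also pinpoints why your proposed fix fails: cutting along a hypersurface dual to the discrepancy class and regluing with a twist changes the monodromy, hence the equivalence class of the cover and in general the total space; it is not a homotopy of $p$ through branched covers, under which the monodromy of loops disjoint from the branch loci is locally constant. The discrepancy can be absorbed exactly when it lifts to $H^1(Y;\Z)$ --- by changing the Seifert hypersurface, a freedom the paper's formulation already builds in --- and when its Bockstein is nonzero it is a genuine obstruction, not a removable one.
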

This theorem is due to Hilden \cite{Hilden78} but a proof is given below for completeness. Unaware of Hilden's paper, the theorem was also rediscovered for $2$--fold branched covers in dimension $2$, $3$ and $4$ in \cite{CarterKamada15b}.

For many $Y$ and null-homologous submanifold $B$ there can be more than one branched cover that is a cyclic cover in the complement of $B$. But when we say ``cyclic branched cover" we mean the branched cover of $Y$ that unwinds each meridian to $B$ according to the fold of the cover. (More precisely consider the map $\pi_1(Y-B)\to \Z$ obtained by abelianizing followed by the map $H_1(Y-B;\Z)\to \Z$ induced by intersecting with some 
chosen connected oriented Seifert hypersurface for $B$. 
 Then the inverse image of $n\Z$ gives the subgroup defining the cyclic cover. This subgroup can depend on the Seifert hypersurface, but  the theorem is true for the cyclic cover corresponding to any choice of hypersurface.) 
\begin{remark}
The orientability hypothesis for $B$ is essential as demonstrated in Example~\ref{trivialnb} below. In particular, this example shows that not all branched covers can be braided (or even immersed braided) about $S^n$. 
\end{remark}

\begin{proof}
Let $S$ be a Seifert hypersurface for $B$, that is a co-dimension $1$ connected submanifold $S$ of $Y$ such that $B=\partial S$, corresponding to a given cyclic branched covering as discussed above. 
We will define a smooth function $h\colon Y\to \C$ such that $0$ is a regular value, $h^{-1}(0)=B$, and for any loop $\gamma$ in the complement of $B$ its algebraic intersection with $S$ is given by winding of $h\circ \gamma$ about $0\in \C$. 
Given the function $h$ let 
\[
{X}=\{(x,z)\in Y\times \C: z^n=h(x)\}.
\]
It is clear that the map $p'\colon {X}\to Y:(x,z)\mapsto x$ is the $n$--fold cyclic cover of $Y$ branched along $B$ (indeed it is clearly an $n$--fold covering map in the complement of the branch locus and unwraps each meridian as desired) that is homotopic to $p$. Thus $X$ is diffeomorphic to $M$ and restricting the projection $Y\times \C\to \C$ to ${X}$ will give us the function $f$ claimed in the theorem. 

We are left to construct $h$. Use $S$ to provide a framing for the normal bundle of $B$ and use this framing to identify a tubular neighborhood of $B$ with $N=B\times D^2$ where we are thinking of $D^2$ as the unit disk in $\C$ and $S\cap N$ agrees with $B$ times the positive real axis. Define $h\colon B\times D^2\to D^2$ by projection and extend it to all of $Y$ as follows. Identify a neighborhood of $S\cap (Y-N)$ with $N'=S\times (-\epsilon, \epsilon)$ for some small $\epsilon>0$ and define $h$ on $S\times (-\epsilon, \epsilon)$ by $h(x,t)=e^{it}$. Notice that we have $h$ defined on $\partial \overline{(Y-N\cup N')}$ so that the image is contained in $\partial D^2$ minus a neighborhood of $1$. That is the image is contained in an interval and hence we can extend $h$ over $Y-(N\cup N')$ such that $h\not=0$ there. We can now approximate $h$ by a smooth function relative to $N$. As this approximation can be made arbitrarily small we can guarantee that 0 is still a regular value and $B=h^{-1}(0)$. 
\end{proof}

While it is not true that all branched covers of a 3--manifold over $S^3$ can be realized by a braiding, see Example~\ref{nobraid}, Hilden, Lozano and Montesinos \cite{HildenLozanoMontesinos83}, see also \cite[Section~5.3]{Montesinos85}, proved that all 3--manifolds can be braided even if not via a specific branched covering.

\begin{thm}[Hilden, Lozano and Montesinos 1983, \cite{HildenLozanoMontesinos83}]\label{HLM}
Every closed oriented 3--manifold $M$ can be braided about $S^3$ where the corresponding branched cover is a simple 3-fold branched cover.
\end{thm}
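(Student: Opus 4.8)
The plan is to combine the classical representation of $3$--manifolds as branched covers with the open book picture for branched covers of $S^3$ developed above, thereby reducing the problem to braiding a branched cover over a single disk page. First I would invoke the classical theorem of Hilden and Montesinos (see \cite{Montesinos85}) that any closed oriented $3$--manifold $M$ is a simple $3$--fold branched cover $p\colon M\to S^3$ over a link $B$, with monodromy $\pi_1(S^3-B)\to S_3$ sending each meridian to a transposition. By Alexander's theorem \cite{Alexander23} the branch link $B$ may be isotoped to a closed braid, that is, to be transverse to every page of the standard disk open book of $S^3$ from Example~\ref{stdobd} and disjoint from the binding $U$. As recalled in the discussion preceding Example~\ref{L31}, pulling this open book back along $p$ yields an open book for $M$ whose page $\Sigma$ is the simple $3$--fold cover of $D^2$ branched over the finitely many points $B$ meets a page, and whose monodromy $\widetilde b\colon \Sigma\to\Sigma$ covers the braid monodromy $b\colon D^2\to D^2$.

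The heart of the argument is then to realize the cover $\Sigma\to D^2$ as a \emph{braided surface}, by which I mean an embedding $\Sigma\hookrightarrow D^2\times D^2$ whose composition with projection to the first factor is exactly this branched cover. Because the base is a disk, with free fundamental group, this can be done directly: place the three sheets lying over the basepoint at three distinct points of the fibre $D^2$, and for each branch point join the two sheets named by its transposition along an embedded band lying over a disjoint arc running out to that branch point. A transposition of non-adjacent sheets forces the associated band to reach past the middle sheet, but since one point may be moved around another inside the $2$--dimensional fibre without collision, such a band remains embedded; thus every simple branched cover of the disk is realized by an embedded braided surface (compare \cite{Kamada94}).

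I would then glue these fibrewise braided surfaces together. As the angular parameter runs around the $S^1$ of the open book the branch points move by the braid $b$ and the three sheets move by the corresponding loop in the configuration space of the fibre $D^2$; choosing the braided surface over the page to be carried into itself by $\widetilde b$ makes the fibrewise pictures assemble into an embedding of the mapping torus part of $M$ into $(S^3-U)\times D^2$ that closes up. Over a neighborhood of the binding $U$ the cover $p$ is unbranched, since $B$ is disjoint from $U$, so the braiding extends across the glued-in solid tori without obstruction, yielding a braided embedding $e\colon M\hookrightarrow S^3\times D^2\subset S^5$ whose associated branched cover is simple and $3$--fold.

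The main obstacle I expect lies in the embeddedness and global compatibility of the last two steps: keeping the bands for non-adjacent transpositions disjoint from one another and from the sheets, and arranging the monodromy $\widetilde b$ so that the fibrewise configurations genuinely close up to a loop rather than merely to a path in configuration space. Both difficulties are eased by first normalizing the coloring so that only the adjacent transpositions $(1\,2)$ and $(2\,3)$ ever label a meridian, which one can arrange using the splitting move of Lemma~\ref{lem:connect} --- trading an $(i\,k)$--labelled passage for $(i\,j)$ and $(j\,k)$ at the cost of additional branch points, without changing the covering manifold. With only adjacent sheets ever merging, the connecting bands never need to cross the middle sheet and the braided surface is manifestly embedded, so the remaining work is the routine but careful check that the normalized family assembles and closes over the open book.
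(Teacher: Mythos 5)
The paper does not actually prove this theorem; it quotes it from \cite{HildenLozanoMontesinos83} (see also \cite[Section~5.3]{Montesinos85}), so your proposal has to be measured against what the statement really requires. Your framework --- represent $M$ as a simple $3$--fold cover branched over a closed braid, pull back the disk open book, build a braided surface over one page, and glue --- is the natural one, but the step you defer as ``routine but careful'' is precisely the entire content of the theorem, and it genuinely fails for the input you feed it. Closing up the fibrewise picture around the $S^1$ of the open book is equivalent to lifting the monodromy homomorphism $\pi_1(S^3\setminus B)\to S_3$ to the braid group $B_3$ of the fibre disk so that every meridian of $B$ goes to a band generator (a conjugate of a half-twist): a braiding of the covering $p$ is exactly such a lift, and conversely. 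This lifting problem is obstructed in general: Example~\ref{nobraid} of this very paper produces a simple $3$--fold branched covering $L(3,1)\to S^3$ (and an infinite family of higher-degree analogues) that cannot be braided about $S^3$ at all. So if the covering handed to you by the Hilden--Montesinos existence theorem happens to be such a covering, no choice of bands over a page will ever assemble coherently. What Theorem~\ref{HLM} asserts is that the branched covering representation of $M$ can be \emph{changed} --- not merely recoloured --- until such a lift exists; your argument never makes, nor even acknowledges, this choice, which is why the paper is careful to say the braiding need not realize ``a specific branched covering.''

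The normalization you propose via Lemma~\ref{lem:connect} does not repair this. If you insist that \emph{every} meridian of $B$ carry the label $(1\,2)$ or $(2\,3)$, the Wirtinger relations force the $(1\,2)$--labelled and $(2\,3)$--labelled sublinks to have no crossings with each other (conjugating $(2\,3)$ by $(1\,2)$, or vice versa, produces $(1\,3)$), hence the branch locus is a split link $L_{12}\sqcup L_{23}$. Tracing through the resulting covering shows $M\cong \Sigma_2(L_{12})\#\Sigma_2(L_{23})$, a connected sum of two double branched covers of links in $S^3$. Not every closed oriented $3$--manifold is of this form --- $T^3$ is prime and, by a classical theorem of Fox, is not the double branched cover of any link in $S^3$ --- so no sequence of moves from Lemma~\ref{lem:connect} can achieve your normalization in general (the lemma preserves the covering manifold). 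If instead you normalize only the labels of the marked points on a single page, then as the braid monodromy is traversed the labels get conjugated, $(1\,3)$ reappears on intermediate pages, bands between non-adjacent sheets cannot be avoided, and the closing-up problem sits exactly where you left it. As written, your argument proves the theorem only for connected sums of two double branched covers; the missing idea is the modification of the covering itself that makes the $S_3$--monodromy lift to $B_3$.
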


If $Y$ is embedded in an $(n+2)$--manifold $X$ with trivial normal bundle, then we denote a tubular neighborhood of $Y$ in $X$ by $N=Y\times D^2$. We say an embedding of $M$ in $X$ can be \dfn{braided about $Y$} if the embedding can be isotoped to lie in $N$ such that it is a braid about $Y$. We recall the following known results.
\begin{thm}[Alexander 1923, \cite{Alexander23} for $n=1$ and Kamada 1994, \cite{Kamada94} for $n=2$]
Let $S^n\subset S^{n+2}$ be the standard embedding of $S^{n}$ in $S^{n+2}$. If $n=1$ or $n=2$ then any embedding of an $n$--manifold in $S^{n+2}$ can be braided about $S^n$. If $n=2$ then we can take the associated branched covering to be simple.  
\end{thm}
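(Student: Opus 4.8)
The plan is to argue the two dimensions separately, since $n=1$ is classical while $n=2$ requires a genuinely two--dimensional move. Recall from Example~\ref{stdobd} that the standard $S^n\subset S^{n+2}$ is the binding of an open book with pages $D^{n+1}$, and that on a neighborhood $N=S^n\times D^2$ the projection $\pi\colon N\to S^n$ has the meridian $2$--disks $\{s\}\times D^2$ as fibers; by definition $M$ is braided about $S^n$ once it is isotoped into $N$ so that $\pi|_M\colon M\to S^n$ is a branched covering. For $n=1$ this is precisely Alexander's theorem that every link in $S^3$ is isotopic to a closed braid: writing $L$ for the link and $\phi$ for the angular coordinate about the (complementary, unknotted) braid axis, $L$ is braided about $S^1$ exactly when $\phi|_L$ is a submersion. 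I would prove this by the classical \emph{Alexander trick}: after a small perturbation $\phi$ is Morse on each component of $L$, and each maximal arc on which $\phi$ is decreasing (a \emph{bad arc}) can be removed by an ambient isotopy that drags it across the braid axis, replacing it by an increasing arc while adding one strand. Since each move strictly reduces the number of bad arcs, finitely many of them make $\phi$ monotone along $L$, so $L$ is braided about $S^1$.

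For $n=2$ I would follow Kamada and seek an isotopy carrying a closed surface $F$ into $N=S^2\times D^2$ so that $\pi|_F\colon F\to S^2$ becomes a \emph{simple branched covering}, with only isolated singularities modeled on $z\mapsto z^2$. The obstruction to placing $F$ in such a position directly is that a generic smooth map between closed surfaces carries, besides admissible branch points, $1$--dimensional \emph{fold} curves (with isolated cusps along them) by Whitney's classification of stable maps; along a fold the surface turns back in the meridian direction, so $\pi|_F$ fails to be a covering there.

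The key step, a two--dimensional analogue of the Alexander trick, is a \emph{braiding move}: one isotopes a collar of a fold arc across the axis so that the folded collar is replaced by one that is monotone in the meridian direction, at the cost of adding a sheet to the covering---exactly as a bad arc is traded for an added strand above. Arranging the surface so that along the remaining singular set the two colliding sheets meet only to first order forces the surviving singularities to be simple branch points, which gives the final clause that the associated branched covering can be taken to be simple.

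The hard part, and where dimension $2$ genuinely exceeds Alexander's argument, is the global bookkeeping needed for the braiding moves to terminate. A fold is a closed $1$--manifold in $F$ rather than a single arc, so it cannot be disposed of by one local move; one must cut it into controlled pieces, handle the cusps occurring along it, and verify that each move strictly decreases a suitable complexity (say the number and total length of the fold curves) while creating no new uncontrolled singularities elsewhere on $F$. Checking this termination, and that the final map is an honest simple branched covering rather than merely a generic map, is the crux; it is carried out in detail by Kamada in \cite{Kamada94}.
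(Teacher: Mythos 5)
Your review point of departure should be that the paper contains no proof of this statement at all: it is ``recalled'' as a known result, and the bracketed attribution to \cite{Alexander23} and \cite{Kamada94} is the entire argument. So the only benchmark is the cited literature. Against that benchmark, your $n=1$ part is the standard Alexander trick and is essentially correct as a sketch, with one standard caveat: a \emph{maximal} decreasing arc cannot in general be swung across the axis in a single move, since it may be entangled with the rest of the link; one first subdivides $L$ into arcs small enough that each bad arc lies in a ball that can be pushed over the axis while missing the rest of $L$. With that fix the number of bad arcs strictly decreases and the argument terminates.

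The $n=2$ case has a genuine gap, and it is exactly the step you yourself label ``the crux'' and defer. Two concrete points. First, your fold-removal scheme nowhere uses orientability of $F$, but the theorem is false without it: $\R P^2$ embeds smoothly in $S^4$, yet no non-orientable surface can ever be braided about $S^2$, because any surface branched covering of $S^2$ is orientable (the covering of the complement of the branch points is a covering of an orientable surface, hence orientable, and filling the punctures back in preserves orientability). So the statement implicitly assumes orientability, as in Kamada's theorem, and the ``termination bookkeeping'' you postpone cannot be mere bookkeeping: any correct argument must fail for non-orientable surfaces, which means orientability has to enter precisely in the step you have left open. Deferring it defers the entire theorem. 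Second, the deferral points to a source that does not carry out your program: Kamada's published proof in \cite{Kamada94} does not remove Whitney folds of a generic projection; it puts the surface into a normal form described by its motion picture, braids the cross-sectional links using Alexander's theorem, and realizes the saddle bands as braided bands, each saddle contributing one simple branch point. The fold-removal approach you sketch is, as the paper itself notes, essentially Viro's, whose proof never appeared in the literature. So as written your $n=2$ argument is a plausible plan rather than a proof, and its key step cannot be discharged by the citation you give.
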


It is not currently known if all embeddings of a 3--manifold into $S^5$ can be braided about the standardly embedded $S^3$. While it is conjectured that such braidings do exist, we demonstrate that there is at least an infinite family of isotopy classes of embeddings of $S^3$ into $S^5$ that can be so realized. 
\begin{example}\label{differentembeddings}
We will construct a family of embeddings as ``open book embeddings". That is given open book decompositions $(\Sigma, \phi)$ and $(\Sigma',\phi')$ if we have a proper embedding $e:\Sigma\to \Sigma'$ such that $\phi'$ fixes the image of $e$ and when restricted to it is conjugate to $\phi$ via $e$, then we may clearly use $e$ to embed the mapping torus $T_\phi$ into $T_{\phi'}$ and extend this to an embedding of $M_{(\Sigma,\phi)}$ into $M_{(\Sigma',\phi')}$ by defining it to be $\partial \Sigma\times D^2\to \partial \Sigma'\times D^2:(p,x)\mapsto(e(p),x)$ on a neighborhood of the binding.

To construct our embeddings we will consider the open books $(D^2,id_{D^2})$ and $(D^4,id_{D^4})$ for $S^3$ and $S^5$, respectively, where we are thinking of $D^4$ as $D^2_1\times D^2_2$.  These open books induce the decompositions  $S^3= (S^1\times D^2)\cup (D^2\times S^1)$ and $S^5= (S^1\times D^4)\cup (D^2\times S^3)$. (In both cases the second factor is the neighborhood of the binding and all disks are unit disks in $\C$.) Embedding the $D^2$ page of $S^3$ into $D^4=D^2_1\times D^2_2$ as $D^2_1\times \{0\}$ we obtain an open book embedding as discussed above. Specifically, thinking of $S^5$ as the unit sphere in $\R^6$, the embedding of $S^3$ into $S^5$ has image $S^5$ intersected with the $\R^4\subset \R^6$. More explicitly, we can embed $S^3\times D^2$ as a neighborhood of this standardly embedded $S^3$ in $S^5$ as follows:
\[
(S^1\times D^2\times D^2)\to (S^1\times D_1^2\times D^2_2):(\theta, z, w)\mapsto (\theta, z, \frac 12 w)
\]
and
\[
(D^2\times S^1\times D^2)\to D^2\times ({\partial D_1^2\times D_2^2}): {(z',\theta', w )\mapsto(z',\theta', \frac 12 w)}
\]
where we are thinking of $S^3=\partial D^4$ as $({\partial D_1^2}\times D^2_2)\cup (D_1^2\times \partial D_2^2)$. Denote this embedding by $e$.

We recall that a quasi-positive braid is simply a braid that is written as a product of conjugates of the standard Artin generators of the braid group. We refer the reader to \cite{EtnyreVanHorn-Morris??, Rudolph83a} for details on quasi-positive braids. In particular, given an $n$ braid written as a product of $k$ conjugates of generators one can construct a ribbon immersion of a surface $\Sigma$ into $S^3$ whose boundary is the closure of the braid and has Euler characteristic $n-k$. In \cite[Section~2]{Rudolph83a} it was shown that there is also an embedding of $\Sigma$ into $D^2_1\times D^2_2$ so that the projection to $D^2_1$ restricted to $\Sigma$ is an $n$-fold branched covering map, with $k$ simple branch points. 

Now consider the quasi-positive braid $\sigma_1(\sigma_2^{2n+1} \sigma_1\sigma_2^{-(2n+1)})$, where $\sigma_1$, $\sigma_2$ are the Artin generators of the braid group with three strands $B_3$. The corresponding surface $\Sigma_n$ is a disk with a ``braided embedding" $f_n$ into $D^2_1\times D^2_2$, and one can assume that $f_n$ has image in $D^2_1\times \frac 12 D^2_2$. We are now ready to construct our braided embeddings of $S^3$ into $S^5$. Thinking of $S^3$ as $(S^1\times D^2)\cup (D^2\times S^1)$ we define the embedding $e_n$ as follows:
\[
(S^1\times D^2)\to (S^1\times D_1^2\times D^2_2): (\theta, z)\mapsto (\theta, f_n(z))
\]
and
\[
(D^2\times S^1)\to D^2\times ({\partial D_1^2\times D_2^2}): {(z',\theta')\mapsto (z',f_n|_{\partial \Sigma_n}(\theta'))}.
\]
Notice that $e_n$ is clearly a braided embedding about $e(S^3\times\{0\})$. 
It is also easy to see that the branched locus is a trivial link with two components and the branched cover is 3-fold and simple. 

To show our embeddings are non-isotopic we compute the fundamental groups of their complements. To this end we note it is not too hard to compute $\pi_1((D^2_1\times D^2_2)-f_n(D^2))$, for example an algorithm (similar to the Wirtinger presentation of the fundamental group of a knot complement) is given in \cite[Section~4]{Rudolph83a} from which one easy sees that the group has presentation
\[
\left\langle x_1,x_2,x_3| {x_1x_2^{-1}}, x_1^{-1}\left((x_2x_3)^{n+1}x_3(x_3^{-1}x_2^{-1})^{n+1}\right) \right\rangle
\]
and hence also has presentation 
\[
\langle x_2,x_3|  x_2^{-1}((x_2x_3)^{n+1}x_3(x_3^{-1}x_2^{-1})^{n+1}) \rangle.
\]

Notice that the complement of $e_n(S^3)$ in $S^5$ can be written as the union of two parts just as the embedding was defined in two parts. The first part is the complement of the image of $f_n$ times $S^1$ and the second part is the complement of the closure of the braid in $S^3$ times $D^2$. A simple application of {van-Kampen's} Theorem thus gives that $\pi_1(S^5-e_n(S^3))$ is isomorphic to $\pi_1((D^2_1\times D^2_2)-f_n(D^2))$. One may check that these groups are isomorphic to the fundamental group of the complements of the $(2,2n+1)$ torus knots. As it is well known that these groups are non-isomorphic it is clear that all the braided embeddings $e_n$ are non-isotopic. 
 \end{example}

\begin{remark}
Notice that in the last example we could have used any quasi-positive sliced knot to construct a braided embedding of $S^3$ into $S^5$. This would lead to many other non-isotopic braided embeddings. 
\end{remark}

\subsection{Braided immersions}\label{sec:immersions}
We can easily define an immersed version of braiding. Given an $n$--manifold $Y$, an \dfn{immersed braid about $Y$} is an immersion of an $n$--manifold $M$ into $Y\times D^2$ 
\[
i\colon M\to Y\times D^2
\]
such that $\pi\circ i\colon M\to Y$ is a branched covering map, where $\pi\colon Y\times D^2\to Y$ is projection onto the first factor. Moreover we say a branched covering $p\colon M\to Y$ can be realized by an immersed braid about $Y$ if there is a function $f\colon M\to D^2$ such that
\[
i\colon M\to Y\times D^2:x\mapsto (p(x), f(x))
\]
is an immersion (and hence exhibits $M$ as an immersed braid about $Y$). 
\begin{thm}\label{immerse}
Let $p\colon M\to Y$ be any branched cover between closed oriented $n$--manifolds with branch locus $\widetilde{B}\subset M$ a submanifold having trivial normal bundle. Then there is a function $f\colon M\to D^2$ so that 
\[
i\colon M\to Y\times D^2:x\mapsto (p(x), f(x))
\]
exhibits $M$ as an immersed braid about $Y$. 
\end{thm}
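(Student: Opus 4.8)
The plan is to exploit the fact that $\pi\circ i=p$ is forced by the definition of $i$, so that the \emph{only} freedom in $f$---and the only place where immersivity can fail---lies along the ramification locus. Writing $i(x)=(p(x),f(x))$, the differential is $di_x=(dp_x,df_x)$. At any unramified point $dp_x$ is already a linear isomorphism, hence $di_x$ is injective no matter what $f$ is; thus $i$ is automatically an immersion off the singular set $\widetilde{B}'$ of $p$. First I would record that along $\widetilde{B}'$, in the standard local model $(x,z)\mapsto(x,z^k)$ from Section~\ref{sec:bc}, the kernel of $dp$ at a point of $\widetilde{B}'$ is exactly the two-dimensional normal space $N_x\widetilde{B}'$ (the $z$--plane), while $dp$ remains injective on $T_x\widetilde{B}'$. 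Consequently $i$ is an immersion everywhere provided $df$ is injective on $\ker dp_x=N_x\widetilde{B}'$ for every $x\in\widetilde{B}'$: indeed, a vector $v=v_T+v_N$ in $\ker di_x$ must have $dp(v_T)=0$, forcing $v_T=0$, and then $df(v_N)=0$, forcing $v_N=0$.

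Next I would build such an $f$ directly from the trivial normal bundle hypothesis. The ramification order is locally constant along $\widetilde{B}=p^{-1}(B)$, hence constant on each component, so $\widetilde{B}'$ is a union of connected components of $\widetilde{B}$ and therefore inherits a trivial normal bundle. Fixing a trivialization identifies a tubular neighborhood of $\widetilde{B}'$ with $\widetilde{B}'\times D^2$, and I would define $f$ on this neighborhood to be projection onto the $D^2$ factor (rescaled to land inside $D^2$). Along $\widetilde{B}'\times\{0\}$ the restriction of $df$ to the normal $D^2$--directions is then the identity, in particular injective, so by the previous paragraph $i$ is an immersion on this whole neighborhood. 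Since $D^2$ is convex there is no obstruction to extending $f$ smoothly over the rest of $M$ with values in $D^2$ (for instance by a cutoff function, or a Tietze-type extension followed by smoothing); the behavior of $f$ away from $\widetilde{B}'$ is irrelevant, as immersivity there comes for free. This produces a map $i=(p,f)$ that is an immersion everywhere and satisfies $\pi\circ i=p$, exhibiting $M$ as an immersed braid about $Y$.

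The point to get right---rather than a genuine obstacle---is the reduction carried out in the first paragraph: immersivity is a purely local, first-order condition that only asks $df$ to be injective on the two-dimensional kernel of $dp$, and the trivial normal bundle supplies this essentially for free via a local projection. This is exactly why the immersed case is so much softer than the embedded case of Theorem~\ref{cyclicbraid}: an honest embedding would additionally require $f$ to separate the points of each fiber $p^{-1}(y)$, a global condition that carries real obstructions (compare Example~\ref{nobraid}). The only bookkeeping I would need to verify is that $\ker dp$ coincides with the normal bundle of $\widetilde{B}'$ and that $\widetilde{B}'$, being a union of components of $\widetilde{B}$, has trivial normal bundle---both immediate from the local model $(x,z)\mapsto(x,z^k)$.
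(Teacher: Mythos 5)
Your proof is correct and follows essentially the same route as the paper: define $f$ by projection to the $D^2$ factor of a trivialized tubular neighborhood of the ramification locus, extend arbitrarily over the rest of $M$, and verify immersivity pointwise by noting that $dp$ has full rank off the ramification locus while $df$ is injective on $\ker dp$ along it. Your extra bookkeeping (isolating $\widetilde{B}'$ as a union of components of $\widetilde{B}$ and identifying $\ker dp$ with the normal space via the local model) is a slightly more careful version of exactly the rank computation the paper performs.
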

We note that in \cite{CarterKamada15b} this theorem was also proven for the case of simple 3--fold covers in dimension 1, 2 and 3 when $Y$ is a sphere. 
\begin{remark}
The hypothesis on the normal bundle of $\widetilde{B}$ could be replaced by the stronger hypothesis that the normal bundle to branch locus $B$ in $Y$ is trivial.  
\end{remark}
\begin{proof}
Given the branched cover $p\colon M\to Y$ with branch locus $\widetilde{B}\subset M$ having trivial normal bundle, let $N=\widetilde{B}\times D^2$ be a small tubular neighborhood of $\widetilde{B}$ in $M$. Define $f\colon M\to D^2$ on $N$ by projection to the second factor and then extend it to the rest of $M$ arbitrarily. 

Clearly $di=dp\oplus df\colon TM\to TY\oplus TD^2$. At all points $x\in M-\widetilde{B}$ we know $dp_x$ has rank $n$ and so $di_x$ does too. Moreover, at points $x\in \widetilde{B}$ we know $dp_x$ has rank $n-2$ and $df_x$ has rank $2$ on the kernel of $dp_x$. Thus $di_x$ has rank $n$ on all of $M$ and hence $i\colon M\to Y\times D^2$ is an immersion. 
\end{proof}

The example below shows that we can use Theorem~\ref{immerse} to give obstructions to the possible branch locus for a branched cover $M\to Y$. It also shows the necessity of the hypothesis on the branched locus in both Theorems~\ref{cyclicbraid} and~\ref{immerse}.

\begin{example}\label{trivialnb}
It is known that $\C P^n$ does not immerse in $\R^{2n+2}$, for $n>1$. To see this, recall Hirsch \cite{Hirsch59} shows that an $n$--manifold $M$ immerses in $\R^{n+2}$ if and only if there is a 2--dimensional bundle $L$ over $M$ such that $TM\oplus L$ is the trivial bundle. Now if $p(E)$ is the total Pontryagin class of a bundle $E$, then clearly $p(TM\oplus L)=1$. Moreover, the ``Whitney sum formula'', \cite[Theorem~15.3]{MilnorStasheff74}, says that $p(TM\oplus L)=p(TM)\cup p(L)$ modulo elements of order 2. We also know, see \cite[Example~15.6]{MilnorStasheff74}, that the total Pontryagin class of $\C P^n$ is $(1+a^2)^{n+1}$ where $a$ is the generator of $H^2(\C P^n)$ and since $L$ is a 2--dimensional bundle $p(L)=1$. Thus we see that if $\C P^n$ immerses in $\R^{2n+2}$, then $(1+a^2)^{n+1}=1$. But notice that the coefficient on $a^2$ is $n+1$. So if $a^2\not=0$ then $\C P^n$ has no such immersion. 

If $\C P^n$  could be realized as a branched cover over $S^{2n}$ with smooth branch locus having trivial normal bundle then the above theorem would immerse it into $S^{2n}\times D^2$ and hence it could be immersed in $\R^{2n+2}$.  Also notice that if the branched set was orientable then it would have trivial normal bundle, see \cite[Theorem~VIII.2]{Kirby89}. Thus we see that the branch locus for any branched cover of $\C P^n$ over $S^{2n}$ must either be non-embedded or non-orientable. This result was already known in the case of $n=2$ using an Euler characteristic argument \cite{Piergallini95}, but to the best of our knowledge it was not known in higher dimensions. Moreover, it demonstrates how to use braided embeddings/immersions to obtain information about the possible branched loci.

We also note it is well-known that $\C P^2$ is a 2-fold (cyclic) branched cover over an $\R P^2$ embedded in $S^4$ with normal Euler number $2$, \cite{Massey73}. This shows the hypothesis on the branch locus in Theorems~\ref{cyclicbraid} and~\ref{immerse} is essential. 
\end{example}

\begin{example}\label{trivialnb2}
As another example we consider $\R P^n$. Using the total Steifel-Whitney classes it is well known that if $\R P^{2^k}$ immerses in $\R^{2^{k}+c}$ then $c\geq 2^k-1$, see \cite[Theorem~4.8]{MilnorStasheff74}. Thus since $\R P^k$ embeds in $\R P^l$ for $k\leq l$, we see that if $\R P^{2^k-d}$, where $0\leq d<2^{k-1}$, immerses in $\R^{2^{k}-d+2}$ then $0< d\leq 3$. Thus the only possible $n=2^k-d$ where $\R P^n$ has a co-dimension 2 immersion is when $d=1,2,$ or $3$. 
When $k>3$, these cases are ruled out by \cite{Atiyah62, DavisPC}. Thus $\R P^n$ has no co-dimension 2 immersion in Euclidean space for $n>7$. It is known that for $n\leq 7$ there are co-dimension 2 (and sometimes even 1) immersions of $\R P^n$. 

As we argued in the previous example these results imply that $\R P^n$ cannot be realized as a cover of $S^n$, branched over an embedded orientable submanifold, if $n>7$. This recovers most of a result of Little, \cite{Little84}, that says if $\R P^n$ is a cover of $S^n$ branched over a locally flat oriented submanifold then $n=1,3,$ or $7$.  (It is still unknown if $\R P^7$ can be realized as such a branched cover.)
\end{example}

\subsection{Higher co-dimension braids}
Once can consider braids, and immersed braids in higher co-dimension. We set this up as part of a more general interesting question. Given two (possibly singular) bundles $p\colon M\to Y$ and $\pi\colon E\to Y$ one can ask the following question.
\begin{question}
When does there exist an embedding  (or immersion) $e\colon M\to E$ such that $p=\pi\circ e$? 
\end{question}
Said more colloquially, ``When can one embed one bundle in another?" By ``possibly singular" bundles we mean for example that one of the bundles could be, say, a branched cover and the other could be a Lefschetz fibration or other such object. Notice when $\pi\colon E\to Y$ is an honest bundle or Lefschetz fibration then the existence of a bundle embedding of a covering space is the same as the much studied question concerning the existence of a multi-section. 

Similarly if $p\colon M\to Y$ is a branched covering and $\pi\colon Y\times D^k\to Y$ is projection onto the first factor, then an embedding $e\colon M\to Y\times D^k$ for which $\pi\circ e=p$, will be called a \dfn{co-dimension $k$ braiding of $M$ about $Y$} and similarly for immersions. 

We note that if $p\colon M\to Y$ can be realized as a co-dimension $k$ braid about $Y$ then it can be realized as a co-dimension $l$ braid about $Y$ for all $l\geq k$. So Theorems~\ref{cyclicbraid} and~\ref{immerse} give conditions guaranteeing braiding in all co-dimension above $1$. 

We do not have much to say about the general braiding problem, but do ask a couple of questions.
\begin{question}
Can you use higher co-dimensional braiding to give restrictions on the branched set for any branched cover of a given $n$--manifold over $S^n$? For example as was done in Example~\ref{trivialnb} and Example~\ref{trivialnb2} using co-dimension 2 braids. 
\end{question}
\begin{question}
What can generalized braiding say about the smallest dimensional Euclidean space into which you can embed a given manifold? Can such an optimal embedding always be obtained through braiding?
\end{question}

\section{Contact embeddings via braids}
In this section we show how to use braided embeddings to produce contact structures on the ``braid" manifold. We will also use this connection to contact geometry to construct branched covers over $S^3$ that cannot be realized as braids. 

\begin{thm}\label{MainContactBraid}
Let $M$ and $Y$ be a closed oriented (2n+1)--manifolds and 
\[
e\colon M\to Y\times D^2:x\mapsto (p(x), f(x))
\]
a braiding of $M$ about $Y$ such that the branched covering $p\colon M\to Y$ whose branch locus $B\subset Y$ is an orientable submanifold that is not multiply ramified (that is at most one component of the pre-image of each component of $B$ is ramified). There is an orientation on $B$ such that given a contact structure $\xi=\ker \alpha$ on $Y$ and any contact structure $\xi'$ induced on $M$ by the covering $p$ branched along any realization of $B$ as a (positive) transverse contact submanifold, then $e$ may be isotoped so that it is a contact embedding of $(M,\xi')$ into $(Y\times D^2, \ker (\alpha+r^2\, d\theta))$ and moreover the image of $e$ can be assumed to lie in an arbitrarily small neighborhood of $Y\times\{(0,0)\}$. 
\end{thm}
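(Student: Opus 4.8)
The plan is to pull back the ambient contact form along the braiding and recognise it, after a rescaling, as exactly the contact form constructed in the proof of Theorem~\ref{bccontact}. Write $e=(p,f)$ and continue to denote by $\alpha$ its pullback to $Y\times D^2$ under the projection $\pi$. Since $\pi\circ e=p$ and $r^2\,d\theta$ is pulled back from the $D^2$--factor, we get
\[
e^*(\alpha+r^2\,d\theta)=p^*\alpha+f^*(r^2\,d\theta).
\]
Replacing $f$ by $\epsilon f$ is an isotopy of $e$ through braided embeddings (injectivity and the immersion property are clearly preserved by scaling the braid factor), it pushes the image into $Y\times \epsilon D^2$ and so accounts for the ``arbitrarily small neighborhood'' clause, and it multiplies the last term by $\epsilon^2$. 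Setting $R=\epsilon^2$ and $\eta=f^*(r^2\,d\theta)$, the pullback becomes $p^*\alpha+R\eta$, which is precisely the family $\alpha_R$ from the proof of Theorem~\ref{bccontact}.

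Thus the statement reduces to verifying the single hypothesis needed there: that $d\eta$ restrict to a \emph{positive} area form on each naturally oriented fiber of the normal bundle of the ramification locus. Writing $r^2\,d\theta=\operatorname{Im}(\bar w\,dw)=x\,dy-y\,dx$ on the braid disk $D^2$, we find $d\eta=2\,f^*(dx\wedge dy)$, so on a normal fiber $F$ of a ramified component one has $d\eta|_F=2\,(f|_F)^*(dx\wedge dy)$. Here the embedding hypothesis does the essential work: along the ramification locus the local model of $p$ is $(x,z)\mapsto(x,z^k)$ with $k\ge 2$, so $dp$ annihilates the fiber directions and the immersivity of $e=(p,f)$ forces $df$, hence $f|_F$, to be injective there. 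Therefore $d\eta|_F$ is a nonzero area form at the center of $F$, and it is \emph{positive} exactly when $f|_F$ is orientation preserving for the natural orientation of $F$.

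The natural orientation of $F$ is the one pulled back, through the complex--orientation preserving map $z\mapsto z^k$, from the normal orientation of the branch locus that is determined by its transverse contact realization. The sign of the Jacobian of $f|_F$ at the center, on the other hand, is dictated by the braiding and is locally constant along the ramification locus. The hypothesis that $p$ is \emph{not multiply ramified} is exactly what guarantees that over each component of $B$ there is a single ramified component upstairs, so that this sign is a well-defined quantity attached to each component of $B$; we then define the orientation of $B$, component by component, to be the one for which this sign is positive. With this choice $\eta$ satisfies the hypothesis of Theorem~\ref{bccontact}.

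To produce the specific $\xi'$ arising from a chosen positive transverse realization $T$ of $B$, first isotope $e$ so that $\pi\circ e$ is branched along $T$: the smooth isotopy carrying $B$ to $T$ in $(Y,\xi)$ extends to an ambient isotopy of $Y\times D^2$ that drags the braiding along, and the transverse type of $T$ is recorded in $p^*\alpha$ near the branch locus. Then rescale as above and apply Theorem~\ref{bccontact} with the path $\beta_t=p^*\alpha+t\epsilon^2\eta$: for $\epsilon$ small, $\beta_t$ is contact for every $t\in(0,1]$ (away from the ramification locus $p^*\alpha$ is already contact and the correction is small, while near it the positivity of $d\eta$ on the fibers dominates, exactly as in the proof of Theorem~\ref{bccontact}), and $d(\partial_t\beta_t|_{t=0})=\epsilon^2\,d\eta$ is positive on the naturally oriented fibers. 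Hence $e_\epsilon$ is a contact embedding, and by the uniqueness clause of Theorem~\ref{bccontact} the structure it induces is the one determined by the branched cover along $T$, namely $\xi'$. The main obstacle is the orientation bookkeeping of the third paragraph --- aligning the winding direction of the braid factor $f$ with the contact--normal orientation of the branch locus --- together with seeing that the non--multiply--ramified hypothesis is precisely what lets a single orientation of $B$ work uniformly.
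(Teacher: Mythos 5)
Your proposal is correct and takes essentially the same route as the paper: the paper likewise rescales the braid factor (the embeddings $e_R$ of Lemma~\ref{mainlem}), computes $e_R^*(\alpha+r^2\,d\theta)=p^*\alpha+Rf^*(r^2\,d\theta)$, uses the injectivity of $df$ on the normal fibers at ramification points (forced by $e$ being an embedding while $dp$ drops rank there) to induce the orientation on $B$ --- well defined precisely because of the non-multiply-ramified hypothesis --- and then isotopes the branch locus to the transverse realization by an ambient isotopy before checking the contact condition near the ramification locus exactly as in Theorem~\ref{bccontact}. The only cosmetic difference is that you identify the induced structure with $\xi'$ by invoking the uniqueness clause of Theorem~\ref{bccontact} directly, whereas the paper packages the same computation as the standalone Lemma~\ref{mainlem} and reads off $\xi'$ from the form of $\beta_R$.
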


A key idea behind the theorem is that away from the branch locus as we scale $f(x)$ by a small $\epsilon$ the tangent space to the image of $e$ becomes arbitrarily close to the tangent space of $Y\times\{(0,0)\}$. Since the contact condition is open it is clear that $\alpha+r^2\, d\theta$ will restrict to a contact form there. The proof of the theorem, given below, then follows by paying close attention to a neighborhood of the branch locus (for which we have a precise local model) and noting that the induced contact form on $M$ agrees with one giving the contact structure on the branched cover.

\begin{remark}
We note that to apply this theorem we must be able to realize $B$ as a contact submanifold of some contact structure on $Y$. Since $B$ is co-dimension~2 it is not clear if this can always be done except in the case when $B$ is 1~dimensional. 

In particular, this theorem gives a way to try and isotope embeddings of 3--manifolds in $S^5$ to be transverse contact embeddings. We can now prove Theorem~\ref{braidedembedimplycont} which says that if an embedding $M\to S^5$ can be isotoped to be a braided embedding about the standard $S^3$ in $S^5$ then it can be isotoped to be transverse contact embedding.
\end{remark}
\begin{proof}[Proof of Theorem~\ref{braidedembedimplycont}]
The standard embedding of $S^3$ in $S^5$ gives a contact embedding of the standard contact structures. Thus by Proposition~\ref{nbhdprop}, $S^3$ has a neighborhood $S^3\times D^2$ with contact structure given by $\ker(\alpha_{std}+r^2\, d\theta)$, where $\alpha_{std}$ is a contact form for the standard contact structure on $S^3$. 

We now show how one can isotope the embedding so that branched covering corresponding to the embedding has a branch locus which is not multiply ramified so that we can apply Theorem~\ref{MainContactBraid}. Let $\widetilde{B}_1$, $\widetilde{B}_2$ be distinct components of the ramified set in $M$ lying above a component $B$ of the branch locus of $p$.  There exist neighborhoods $\widetilde{N}$ of $\widetilde{B}_1$ and $N$ of $B$ such that $\widetilde{N}$ does not contain ramified points other than $\widetilde{B}_1$, $N\cong S^1\times D^2$ does not contain branch points other than $B$, and $B$ is identified with $S^1\times \{0\}$.
Let $\psi_{t}$, $t\in[0,1]$, be an isotopy generated by a vector field supported in $N$, tangent to the $D^2$-factors of $N$, and non-zero along $B$. We now define the map $p_t\colon M\to Y$ to be $p$ on ${M\setminus\widetilde{N}}$ and $\psi_t \circ p$ on $\widetilde{N}$. This is clearly an isotopy of $p$ and and hence induces an isotopy of $e_t=(p_t,f)\colon M\to S^3\times D^2$ of $e$ through braided embeddings. Notice that for $t>0$ a copy $B'$ of $B$ is added to the branch locus (specifically $B'$ is the image of $B$ under $\psi_t$). By construction the branching above $B'$ is simple and the number of ramified components above $B$ is reduced by one. 
By repeating this process finitely many times, we can isotope the given $e$ to a braided embedding whose branch locus is not multiply ramified. Since any link in $S^3$ can be isotoped to be transverse to the standard contact structure on $S^3$ we can clearly isotope the given embedding to satisfy the hypothesis of Theorem~\ref{MainContactBraid} and thus the theorem gives the desired isotopy. 
\end{proof}

\begin{example}\label{nobraid}
In this example we construct infinitely many branched covers of $S^3$ that cannot be realized as a braid about $S^3$. 

In Example~\ref{L31} we saw that $L(3,1)$ is the simple 3--fold cover of $S^3$ branched along the lower diagram in Figure~\ref{braidex} and that the contact structure $\xi$ induced on $L(3,1)$ from this cover has $c_1(\xi)=\pm1$. Using Lemma~\ref{lem:connect} three times we can change the branch locus of this cover to $B$ shown in Figure~\ref{fig:knotlocus}. 
\begin{figure}[htb]
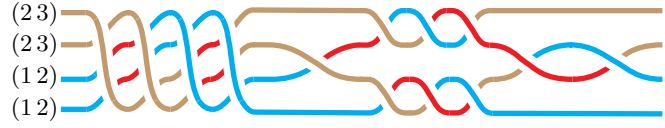
{\small
\begin{overpic}
{KnotLocus}
\put(-19,3){$(1\, 2)$}
\put(-19, 15){$(1\, 2)$}
\put(-19,27){$(2\, 3)$}
\put(-19,39){$(2\, 3)$}
\end{overpic}}
\caption{The closure of this braid represents a 3--fold simple branched cover of $S^3$ yielding $L(3,1)$. The colors on the strands represent elements of $S_3$. Blue represents $(1\, 2)$, brown represents $(2\, 3)$ and red represents $(1\, 3)$.}
\label{fig:knotlocus}
\end{figure}
Notice that $B$ is a knot and that either orientation on $B$ defines the same knot. Let $p\colon L(3,1)\to S^3$ be the branched cover with branch locus $B$ described by the branching data in Figure~\ref{fig:knotlocus}. If this branched cover could be braided about $S^3$ then $(L(3,1),\xi)$ contact embeds in $S^3\times D^2$ with contact structure given by $\ker(\alpha_{std}+r^2\, d\theta)$, where $\alpha_{std}$ is a contact form for the standard contact structure on $S^3$, and thus in $(S^5, \xi_{std})$. But this contradicts Theorem~\ref{thm:obstruct}. Thus there is no such embedding. (Note we needed to have a connected branch locus so that we only had to be concerned with the orientation on a knot and we constructed a knot which is isotopic to its reverse.) 

We note that one may easily write down an infinite family of branched covers that do not embed as follows. Arguing as in Example~\ref{L31} one takes the $2n$ braid, with $n>1$, with one full twist and labels the first two strands on the left by $(1\, 2)$, the next two by $(2\, 3)$ and so on until the last two are labeled by $(n\, n+1)$. Taking the closure of this braid and extending the labeling by the ``Wirtinger relations" at the crossings will describe an $(n+1)$--fold simple branched cover of $S^3$ branched along the given link. The cover will be $L(n+1, 1)$ with induced contact structure $\xi_n$ having $c_1(\xi_n)= (n-1) g$ where $g$ is a generator of $H^2(L(n+1,1))=\Z/(n+1)\Z$. Thus turning the branch locus into a reversible knot as above gives a branched cover that cannot be braided about $S^3$. 

There are many other infinite families that can similarly be constructed. These examples should be compared with the example found in \cite{CarterKamada15b}.
\end{example}

To prove Theorem~\ref{MainContactBraid} we need the following technical lemma. 

\begin{lem}\label{mainlem}
Let $M$ and $Y$ be closed oriented (2n+1)--manifolds and 
\[
e\colon M\to Y\times D^2:x\mapsto (p(x), f(x))
\]
a braiding of $M$ about $Y$. Denote the branch locus of $p$ by $B\subset Y$. Given a contact structure $\xi=\ker \alpha$ on $Y$ in which  $B$ is a transverse contact submanifold, then let $\xi'$ be the contact structure on $M$ induced by the branched cover $p$. 

Let $\widetilde{B}'$ be the subset of $ \widetilde{B}=p^{-1}(B)$ at which $p$ is ramified. If for all $x\in\widetilde{B}'$ the map $df_x\colon T_xM\to T_{f(x)}D^2$ is orientation preserving when restricted to the fiber of the normal bundle $\nu_x(\widetilde{B}')$, where $\nu_x(\widetilde{B}')$ is oriented by the orientation on $\widetilde{B}'$ and $M$, and $D^2$ with polar coordinates $(r,\theta)$ is oriented by $rdr\wedge d\theta$, then for all small $R>0$ the embedding
\[
e_R\colon M\to Y\times D^2:x\mapsto (p(x), Rf(x))
\] 
is a contact embedding from $(M,\xi')$ to $(Y\times D^2,\ker (\alpha+r^2\, d\theta))$.
\end{lem}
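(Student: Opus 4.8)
The plan is to reduce the lemma to the branched-cover construction of Theorem~\ref{bccontact} by pulling the ambient contact form straight back. Writing $\pi_Y$ and $\pi_{D^2}$ for the projections of $Y\times D^2$, we have $\pi_Y\circ e_R=p$ and $\pi_{D^2}\circ e_R=Rf$, so
\[
e_R^*(\alpha+r^2\,d\theta)=p^*\alpha+(Rf)^*(r^2\,d\theta)=p^*\alpha+R^2\eta,\qquad \eta:=f^*(r^2\,d\theta),
\]
where I use that in Cartesian coordinates $r^2\,d\theta=x\,dy-y\,dx$ is homogeneous of degree $2$, so rescaling the disk factor by $R$ scales $\eta$ by $R^2$. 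With $S:=R^2$ the candidate contact form $\alpha_S:=p^*\alpha+S\eta$ is therefore of exactly the shape appearing in the proof of Theorem~\ref{bccontact}, with $\eta=f^*(r^2\,d\theta)$ playing the role of the auxiliary one-form. Since $e$ is an embedding, so is $e_R$ for every small $R>0$ (it is $e$ post-composed with the diffeomorphism scaling the $D^2$-factor), so the whole problem reduces to showing that $\alpha_S$ is a contact form defining $\xi'$ for all small $S>0$.

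For the contact condition I would repeat the expansion from the proof of Theorem~\ref{bccontact}. Using $d\alpha_S=p^*(d\alpha)+S\,d\eta$, the top-degree form $\alpha_S\wedge(d\alpha_S)^n$ is
\[
p^*(\alpha\wedge(d\alpha)^n)+S\left[\,p^*((d\alpha)^n)\wedge\eta+n\,p^*(\alpha\wedge(d\alpha)^{n-1})\wedge d\eta\,\right]+(\text{terms of higher order in }S).
\]
Off a small neighborhood of $\widetilde{B}'$ the map $p$ is a local diffeomorphism, so the leading term is a positive volume form and openness of the contact condition makes $\alpha_S$ contact there for all small $S$. Along $\widetilde{B}'$ the rank of $p$ drops to $2n-1$, which kills $p^*(\alpha\wedge(d\alpha)^n)$ and $p^*((d\alpha)^n)\wedge\eta$; what survives is $n\,p^*(\alpha\wedge(d\alpha)^{n-1})\wedge d\eta$, in which $p^*(\alpha\wedge(d\alpha)^{n-1})$ restricts to a volume form on the $(2n-1)$-manifold $\widetilde{B}'$ (as $p|_{\widetilde{B}'}$ is a covering map) and $d\eta$ restricts to a positive area form on each normal $2$-plane. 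Hence the bracket is a positive volume form along $\widetilde{B}'$, so $\alpha_S$ is contact in a neighborhood of $\widetilde{B}'$ for all small $S$, and combining the two regions $\alpha_S$ is a contact form for all small $R>0$.

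The positivity of $d\eta$ on the normal fibers is where the orientation hypothesis enters, and verifying it carefully is the crux. Compute $d\eta=f^*\!\big(d(r^2\,d\theta)\big)=f^*(2\,dx\wedge dy)$. At a point $x\in\widetilde{B}'$ the kernel of $dp_x$ is exactly the $2$-dimensional normal fiber $\nu_x(\widetilde{B}')$ (in the local model $(x,z)\mapsto(x,z^k)$ this is the $z$-plane), and since $e=(p,f)$ is an immersion, $df_x$ is injective, hence an isomorphism, on $\nu_x$. Thus $d\eta|_{\nu_x}=2\,f^*(dx\wedge dy)|_{\nu_x}$ is a nonzero multiple of the area form, whose sign is exactly the sign with which $df_x|_{\nu_x}$ compares the orientation of $\nu_x$ with the orientation $r\,dr\wedge d\theta$ of $D^2$. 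The hypothesis that $df_x|_{\nu_x}$ is orientation preserving is therefore precisely the statement that $d\eta$ is positive on each naturally oriented normal fiber. Consequently $\beta_t:=p^*\alpha+t\eta$ is a path of forms of the type demanded in Theorem~\ref{bccontact} (note $\partial_t\beta_t|_{t=0}=\eta$, so $d(\partial_t\beta_t|_{t=0})=d\eta$), and the uniqueness part of that theorem identifies the contact structure defined by $\alpha_S$ with the induced structure $\xi'$.

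The main obstacle I anticipate is the orientation bookkeeping: one must confirm that the orientation on $\nu_x(\widetilde{B}')$ induced by the orientations of $M$ and $\widetilde{B}'$ (used in the hypothesis) coincides with the ``natural'' orientation of the normal fiber implicit in the normalization of Theorem~\ref{bccontact}, so that ``orientation preserving'' genuinely means ``positive.'' Once this is pinned down, the remaining ingredients---the openness argument away from the ramification and the local expansion above---are routine.
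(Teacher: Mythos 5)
Your proposal is correct and follows essentially the same route as the paper's own proof: pull back $\alpha+r^2\,d\theta$ under $e_R$, expand $\beta_R\wedge(d\beta_R)^n$, use that $p$ is an orientation-preserving covering map away from $\widetilde{B}'$ so the leading term dominates for small $R$, observe that along $\widetilde{B}'$ the first two terms die and the surviving term $n\,p^*(\alpha\wedge(d\alpha)^{n-1})\wedge d\eta$ is positive exactly by the orientation hypothesis on $df_x|_{\nu_x}$, and identify the resulting contact structure with $\xi'$ via the characterization in Theorem~\ref{bccontact}. If anything you are slightly more careful than the paper (which writes the scaling factor as $R$ rather than $R^2$ and suppresses the higher-order terms), and your explicit appeal to the path $\beta_t=p^*\alpha+t\eta$ and the uniqueness clause of Theorem~\ref{bccontact} makes precise what the paper dismisses with ``it is clear from the form of $\beta_R$.''
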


We note that an immediate corollary of this lemma and the proof of Theorem~\ref{cyclicbraid} is the following result that will be used below. 
\begin{cor}\label{contctforcyclic}
Let $(Y,\xi)$ be a contact $(2n+1)$--manifold and $B$ a co-dimension 2 contact submanifold that is null-homologous and has trivial normal bundle. Let $(M,\xi')$ be the contact structure obtained from $(Y,\xi)$ by an $n$-fold cyclic branched cover branched along $B$. Then there is a braided contact embedding of $(M,\xi')$  into  $(Y\times D^2, \ker(\alpha+r^2\, d\theta))$, where $\alpha$ is a contact form for $\xi$.\hfill \qed
\end{cor}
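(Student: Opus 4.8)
The plan is to feed the explicit cyclic model built inside the proof of Theorem~\ref{cyclicbraid} into the contact criterion of Lemma~\ref{mainlem}. First I would run that construction unchanged: since $B$ is orientable and null--homologous it bounds a Seifert hypersurface $S$, which yields a smooth $h\colon Y\to\C$ with $h^{-1}(0)=B$ and with the winding of $h\circ\gamma$ equal to the algebraic intersection of $\gamma$ with $S$. Then $X=\{(x,z)\in Y\times\C: z^n=h(x)\}$ realizes the $n$--fold cyclic branched cover $p'\colon X\to Y$, $(x,z)\mapsto x$, as a braiding about $Y$ with $f$ the projection to the $\C$--factor, and $X$ is diffeomorphic to $M$ as an oriented cover of $Y$. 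Because $B$ is assumed to be a co-dimension~$2$ contact submanifold of $(Y,\xi=\ker\alpha)$, this is exactly the data required by Lemma~\ref{mainlem}, and the contact structure $\xi'$ that lemma produces on $M$ is the one named in the corollary.

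It remains to check the orientation hypothesis of Lemma~\ref{mainlem} along the ramification set $\widetilde B'$. For the cyclic cover, $\widetilde B'=(p')^{-1}(B)$ is a single fully ramified copy of $B$. Using the Seifert framing to identify a neighborhood of $B$ with $B\times D^2$ as in the proof of Theorem~\ref{cyclicbraid} (so that $h$ is the projection to the $D^2$--factor), the cover acquires a local model in which $M$ is coordinatized near $\widetilde B'$ by $(b,z)$ with $p'(b,z)=(b,z^n)$ and $f(b,z)=z$, so that $\widetilde B'=\{z=0\}$. On a normal fiber $\nu_x(\widetilde B')$ the differential $df_x$ is the identity $\C\to\C$, while $p'$ restricts to the holomorphic map $z\mapsto z^n$, which is orientation--preserving. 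Hence the orientation that $M$ and $\widetilde B'$ put on $\nu_x(\widetilde B')$ corresponds, under this identification, to the orientation of the normal bundle of $B$ in $Y$, which for a contact submanifold is the complex (symplectic) orientation coming from $d\alpha$, and this matches the orientation $r\,dr\wedge d\theta$ on the target $D^2$. After orienting $B$ by its contact orientation, $df_x$ is therefore orientation--preserving on normal fibers, as required.

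With the orientation condition in hand, Lemma~\ref{mainlem} applies directly: for all sufficiently small $R>0$ the map $e_R\colon M\to Y\times D^2$, $x\mapsto(p'(x),Rf(x))$, is a contact embedding of $(M,\xi')$ into $(Y\times D^2,\ker(\alpha+r^2\,d\theta))$. Since $\pi\circ e_R=p'$ remains the branched cover, $e_R$ is by definition a braiding, so it is the desired braided contact embedding.

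I expect the one genuinely delicate point to be the orientation bookkeeping in the second paragraph: reconciling the orientation that the ambient and submanifold orientations force on $\nu(B)$, the orientation carried by the Seifert framing used to build $h$, and the symplectic orientation of the contact normal bundle. Because the cyclic model is holomorphic, all of these agree up to the choice of orientation on $B$, which is precisely the freedom recorded in the statement of Theorem~\ref{MainContactBraid}, so fixing the contact orientation on $B$ settles the matter.
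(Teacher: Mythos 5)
Your overall strategy is exactly the paper's: Corollary~\ref{contctforcyclic} is stated there as an immediate consequence of Lemma~\ref{mainlem} applied to the braiding $X=\{(x,z)\in Y\times\C : z^n=h(x)\}$ constructed in the proof of Theorem~\ref{cyclicbraid}, and your first and third paragraphs carry this out correctly, including the correct local model $p'(b,z)=(b,z^n)$, $f(b,z)=z$ near the ramification locus.

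The gap is in the last step of your orientation check. You correctly identify the normal orientation of $\widetilde B'$ (induced by the orientations of $M$ and $\widetilde B'$) with the induced normal orientation of $B$ in $Y$, and the latter with the symplectic orientation of $\nu(B)$ coming from $d\alpha$. But the final assertion --- that this ``matches the orientation $r\,dr\wedge d\theta$ on the target $D^2$'' --- is not automatic: the coordinate $z$, i.e.\ the map $f$, is defined through the identification $N\cong B\times D^2$ coming from the Seifert framing, and that framing is built from $S$ alone, with no reference to the contact structure. If the second vector of the framing is chosen adversely, the identification carries the symplectic orientation of $\nu(B)$ to $-r\,dr\wedge d\theta$; then the hypothesis of Lemma~\ref{mainlem} fails, and it fails for a real reason: along $\widetilde B'$ the form $\beta_R\wedge(d\beta_R)^n$ reduces to the term $2nR\left(p^*(\alpha\wedge(d\alpha)^{n-1})\wedge f^*(r\,dr\wedge d\theta)\right)$, which is then a \emph{negative} multiple of the volume form, so $e_R$ is genuinely not a contact embedding. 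Moreover, your proposed remedy --- choosing the orientation of $B$ --- is not available in this corollary: the orientation of $B$ is pinned by its contact structure $\xi\cap TB$. (The freedom you cite from Theorem~\ref{MainContactBraid} runs in the opposite direction: there the braiding is given first and $B$ is oriented to suit it.) The correct fix is to adjust the braiding rather than the orientation: either choose the second framing vector $s_2$ so that the frame (direction into $S$, $s_2$) is positively oriented for the symplectic orientation of $\nu(B)$, or equivalently replace $f$ by its complex conjugate $\bar f$. Neither change affects the branched covering $p'$ or the induced contact structure $\xi'$ (which, by Theorem~\ref{bccontact}, depends only on $p'$ and on $B$ being a contact submanifold of $(Y,\xi)$), so with this one adjustment your application of Lemma~\ref{mainlem} goes through.
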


We first establish Theorem~\ref{MainContactBraid} given Lemma~\ref{mainlem} and then prove the lemma. 
\begin{proof}[Proof of Theorem~\ref{MainContactBraid}]
Given the embedding $e\colon M\to Y\times D^2$ as in the statement of the theorem, let $\widetilde{B}=p^{-1}(B)$ and $\widetilde{B}'$ be the subset of $\widetilde{B}$ on which $p$ is actually ramified. Recall by hypothesis $p$ maps each component of $\widetilde{B}'$ to a different component of $B$. 

At any point $x\in \widetilde{B}'$ notice that $df_x$ gives an isomorphism from the fiber of the normal bundle $\nu_x(\widetilde{B}')$ to $T_{f(x)}D^2$ since the map $de_x\colon T_xM\to T_{e(x)}(Y\times D^2)$ has rank (2n+1), but $dp_x\colon T_xM\to T_{p(x)}Y$ has only rank $2n-1$. Thus at each point of $\widetilde{B}'$ there is an induced orientation on the fibers of $\nu(\widetilde{B}')$ and this orients each component of $\widetilde{B}'$, which in turn induce an orientation on $B$ via $p$. 

Now if $B$ can be isotoped to a positive transverse contact submanifold then there is an ambient isotopy $\phi_t\colon Y\to Y, t\in[0,1]$ that realizes this isotopy. Thus there is a diffeomorphism of $Y\times D^2$ that takes $e$ to $e'\colon M\to Y\times D^2:x\mapsto (\phi_1\circ p(x), f(x))$. And $e'$ realizes $M$ as braided about $Y$ and the corresponding branched set is the transverse realization of $B$. The theorem now follows from Lemma~\ref{mainlem}.
\end{proof}

\begin{proof}[Proof of Lemma~\ref{mainlem}]
Let $\beta_R=e_R^*(\alpha+r^2\, d\theta)= p^*\alpha + Rf^*(r^2\, d\theta)$. The contact condition concerns the form $\beta_R\wedge(d\beta_R)^n$ which is equal to
\[
p^*(\alpha\wedge (d\alpha)^n)+ R\left(p^*((d\alpha)^n)\wedge f^*(r^2\, d\theta)\right) +2nR\left(p^*(\alpha\wedge(d\alpha)^{n-1})\wedge f^*(r\, dr\wedge d\theta)\right).
\]
Away from $\widetilde{B}'$, $p$ is a covering map so the first term is a positive multiple of the volume form. Thus for $R$ sufficiently small $\beta_R$ is a contact form on the complement of a neighborhood of $\widetilde{B}'$. On the branch locus $\widetilde{B}'$ recall that $p$ has rank $2n-1$ and more specifically is a covering map when restricted to $\widetilde{B}'$ and has 0 derivative in the normal directions to $\widetilde{B}'$. Thus the first two terms in the expression for $\beta_R\wedge(d\beta_R)^n$ above are zero and the last term is a positive multiple of the volume form for $M$. This is clear by the hypothesis on $f$ in the lemma and the fact that $p^*(\alpha\wedge(d\alpha)^{n-1})$ is positive volume form on $\widetilde{B}'$ and $f^*(r\, dr\wedge d\theta)$ is an area from on the fiber to the normal bundle $\nu(\widetilde{B}')$. Moreover it is clear from the form of $\beta_R$ that it gives the contact structure $\xi'$ coming from the cover $p\colon M\to Y$ branched along $B$. 
\end{proof}

\section{Contact embeddings of 3--manifolds in $(S^5,\xi_{std})$}
We begin with a simple observation. 
\begin{prop}\label{oneembed}
Any closed oriented 3--manifold has some, possibly overtwisted, contact structure that embeds in $(S^5,\xi_{std})$.
\end{prop}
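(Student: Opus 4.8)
The goal is to show that every closed oriented 3--manifold $M$ admits \emph{some} contact structure (possibly overtwisted) that contact embeds in $(S^5,\xi_{std})$. The plan is to combine the Hilden--Lozano--Montesinos braiding theorem (Theorem~\ref{HLM}) with the contact braiding machinery established in Theorem~\ref{MainContactBraid} and Theorem~\ref{braidedembedimplycont}.

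First I would invoke Theorem~\ref{HLM}, which says that any closed oriented 3--manifold $M$ can be braided about the standard $S^3\subset S^5$, with the corresponding branched cover being a simple $3$--fold branched cover $p\colon M\to S^3$ with branch locus a link $B\subset S^3$. Since this exhibits $M$ as a braid about $S^3$, there is an honest (smooth) braided embedding $e\colon M\to S^3\times D^2\subset S^5$.

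Next I would apply Theorem~\ref{braidedembedimplycont}: because $e$ is (already) a braided embedding about the standard $S^3$ in $S^5$, it can be isotoped to a transverse contact embedding. Concretely, one uses the neighborhood $S^3\times D^2$ of the standard $S^3$ carrying the contact structure $\ker(\alpha_{std}+r^2\,d\theta)$ (from Proposition~\ref{nbhdprop}), isotopes the branch locus $B$ to be transverse to $\xi_{std}$ on $S^3$ --- every link in $S^3$ admits such a transverse realization --- and, after removing multiple ramification as in the proof of Theorem~\ref{braidedembedimplycont}, applies Theorem~\ref{MainContactBraid}. The output is a contact structure $\xi'$ on $M$, induced by the branched cover $p$ via Theorem~\ref{bccontact}, together with a contact embedding of $(M,\xi')$ into $(S^3\times D^2,\ker(\alpha_{std}+r^2\,d\theta))$, and hence into $(S^5,\xi_{std})$.

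Combining these two steps gives the statement: $(M,\xi')$ contact embeds in $(S^5,\xi_{std})$ for the contact structure $\xi'$ produced by the construction. I would note that the proposition claims nothing about controlling $\xi'$ (for instance it need not be tight, and indeed the obstruction in Theorem~\ref{thm:obstruct} forces $c_1(\xi')=0$ whenever the embedding exists), so no uniqueness or optimality needs to be argued. The only real content to verify is that the hypotheses of Theorem~\ref{MainContactBraid} are met --- orientability of the branch locus and the absence of multiple ramification --- but the former is automatic for a link in $S^3$ and the latter is arranged exactly by the isotopy argument already recorded in the proof of Theorem~\ref{braidedembedimplycont}. Thus the main (and only) obstacle is bookkeeping: ensuring the branching data from Theorem~\ref{HLM} can be placed in the form required by the contact braiding theorem, which the earlier results handle directly.
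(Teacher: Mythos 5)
Your proposal is correct and follows essentially the same route as the paper: Theorem~\ref{HLM} produces a simple $3$--fold braided embedding of $M$ about $S^3$, Theorem~\ref{MainContactBraid} (with the branch locus isotoped transverse to $\xi_{std}$) yields a contact embedding of the induced contact structure into $(S^3\times D^2,\ker(\alpha_{std}+r^2\,d\theta))$, and Proposition~\ref{nbhdprop} identifies that model with a neighborhood of the standard $S^3$ in $(S^5,\xi_{std})$. The only cosmetic difference is that you route the argument through Theorem~\ref{braidedembedimplycont} and its de-ramification isotopy, whereas the paper applies Theorem~\ref{MainContactBraid} directly, since a simple cover is automatically not multiply ramified.
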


\begin{proof}
Given a closed oriented 3--manifold $M$, Theorem~\ref{HLM} tells us that there is a braided embedding
\[
e\colon M\to S^3\times D^2
\]
such that the corresponding branched covering is a simple 3--fold branched cover. Thus $e$ satisfies the hypothesis of Theorem~\ref{MainContactBraid} and since the branch locus can be isotoped to be a transverse link in $(S^3,\xi_{std})$ the contact structure $\xi'$ induced on $M$ by this branched cover contact embeds in $(S^3\times D^2, \ker(\alpha+r^2\, d\theta)),$ where $\alpha$ is a contact form for the standard contact structure on $S^3$. 

Now of course the standard embedding of $S^3$ into $S^5$ is also an embedding of the standard contact structures. Hence, by Proposition~\ref{nbhdprop}, $S^3$ has a neighborhood $S^3\times D^2$ in $S^5$ on which the contact structure is given by  $\ker(\alpha+r^2\, d\theta)$. Since the contact embedding from Theorem~\ref{MainContactBraid} can be arranged to be arbitrarily close to $S^3\times\{(0,0)\}$ we see that $M$ has a contact embedding into $(S^5,\xi_{std})$ that is arbitrarily close to the embedding of $S^3$. 
\end{proof}

\begin{remark}
We note that the braided embeddings constructed above can also also be made into open book embeddings as discussed in Examples~\ref{differentembeddings}. Indeed, to see this we first note that the standard embedding of $S^3$ in $S^5$ is such an embedding. Now given a contact embedding of a 3--manifold $M$ constructed as a braid about the standardly embedded $S^3$ as in the proof of Proposition~\ref{oneembed}, 
by applying Alexander theorem for (transverse) links to its branch locus in the standardly embedded $S^3$, we can isotope it (transversely) to a contact embedding which is compatible with some supporting open book for the embedded contact 3--manifold and the standard open book which is a supporting open book for $(S^5,\xi_{std})$. 
\end{remark}

We are now ready to prove Theorem~\ref{embeds3s} that says all contact structures on $S^3$ can be embedded into $(S^5,\xi_{std})$ in the isotopy class of the standard embedding and in infinitely many other isotopy classes.
\begin{proof}[Proof of Theorem~\ref{embeds3s}]
For $n\geq0$ let $T_{n}$ be the transverse unknot in $(S^3,\xi_{std})$ with self-linking number $-1-2n$. Recall from \cite{Eliashberg93} there is a unique such transverse knot and as $n$ ranges over the positive integers this is a complete list of transverse unknots in the standard contact structure on $S^3$. Moreover $T_{n+1}$ is the stabilization of $T_n$.  It is easy to check that the contact structure on $S^3$ obtained from the 2--fold cover of $(S^3,\xi_{std})$ branched along $T_{0}$ is $\xi_{std}$. Thus from Proposition~\ref{stabilize} we see that the overtwisted contact structure $\xi_n$ on $S^3$, for $n>0$, is obtained as the 2--fold cover of $(S^3,\xi_{std})$ branched along $T_n$. 

The standardly embedded $(S^3,\xi_{std})$ in $(S^5,\xi_{std})$ has a neighborhood $S^3\times D^2$ contactomorphic to $(S^3\times D^2,\ker (\alpha_{std}+r^2\, d\theta))$, where $\xi_{std}=\ker \alpha_{std}$. Lemma~\ref{isotop} below shows how to create a braided embedding about the standard embedding, whose branched covering map is a 2-fold cyclic covering branched over the unknot, that is smoothly isotopic to the standard embedding. Since any oriented knot is isotopic to a positive transverse knot, Theorem~\ref{MainContactBraid} gives a contact embedding $(S^3,\xi_n), n\geq 1,$ into $(S^5,\xi_{std})$ in this isotopy class of embedding. (We note that once $(S^3,\xi_1)$ is embedded, the argument below will embed the other $(S^3,\xi_n)$, for all $n$, but it is interesting to note that the $\xi_n$, for $n>0$, can all be embedded using a braided embeddings about $(S^3,\xi_{std})$.)

Arguing similarly if we can show that a $(S^3,\xi_n)$ for any $n$ is a 2--fold cover of $(S^3,\xi_1)$ branched along some transverse unknot, then we will have contact embeddings of these contact manifolds into $(S^5,\xi_{std})$. To this end recall \cite{Dymara01, Etnyre13} that in $\xi_1$ there are transverse knots $T'_n$ with self-linking number $-1-2n$ for all $n\in \Z$ whose complements are overtwisted and $T'_{n+1}$ is a stabilization of $T'_n$. Because the complements are overtwisted it is clear that all the 2--fold cyclic covers of $(S^3,\xi_1)$ branched along $T'_n$ are overtwisted contact structures on $S^3$, which we will denote $\eta_n$. Proposition~\ref{stabilize} tells us that $d_3(\eta_n)=d_3(\eta_0)+n$. And so the $\eta_n$ realize all homotopy classes of plane field, and hence by Theorem~\ref{otclass} all overtwisted contact structures, on $S^3$. 

Now consider the braided embeddings from Example~\ref{differentembeddings}. Recall there are infinitely many distinct isotopy classes of embeddings and the embeddings respect the standard open books on $S^3$ and $S^5$ (that is, they send pages to pages and binding to binding). So the induced contact structures on $S^3$ from these embeddings are all supported by the standard open book and hence are all $\xi_{std}$. Now connect summing with the embeddings constructed above give embeddings of all contact structures on $S^3$ into these isotopy classes of smooth embeddings. 
\end{proof}

\begin{lem}\label{isotop}
If $p\colon S^3\to S^3$ is the $k$-fold cyclic branched covering map with branch locus the unknot $U$ then there is a map  $h\colon S^3\to \C$ such that 
\[
e\colon S^3\to S^3\times \C: x\mapsto (p(x),h(x))
\] 
is a braided embedding 
for which the embedding of $S^3\to S^5$ coming from $e$ is isotopic to the standard embedding. 
\end{lem}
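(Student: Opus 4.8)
The plan is to make the cyclic-branched-cover construction from the proof of Theorem~\ref{cyclicbraid} completely explicit for the unknot, push the resulting braid into $S^5$ through a standard tubular neighborhood of $S^3$, and then exhibit by hand an isotopy that ``untwists'' it onto a linear coordinate sphere. First I would set up the model: write $S^3=\{(z_1,z_2)\in\C^2:|z_1|^2+|z_2|^2=1\}$ and take the branch unknot to be $U=\{z_2=0\}$, which bounds the obvious Seifert disk. Applying the recipe of Theorem~\ref{cyclicbraid} with the (already smooth) function $z_2\colon S^3\to\C$ identifies the $k$--fold cyclic branched cover with
\[
\widetilde S=\{(z_1,w)\in\C^2:|z_1|^2+|w|^{2k}=1\},\qquad p(z_1,w)=(z_1,w^k),
\]
and the braiding function with $h(z_1,w)=w$, so that $e(z_1,w)=((z_1,w^k),w)$ is manifestly an embedding with $\pi\circ e=p$ and image in $S^3\times D^2$ since $|w|\le 1$. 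Here $\widetilde S$ is a smooth sphere, being the boundary of the star-shaped ball $\{|z_1|^2+|w|^{2k}\le1\}$.

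Next I would realize $S^5=\{|Z_1|^2+|Z_2|^2+|Z_3|^2=1\}\subset\C^3$ with standard copy $\{Z_3=0\}$, fix the concrete tubular neighborhood $\nu\colon S^3\times D^2\to S^5$, $\nu((z_1,z_2),w)=(1+\rho^2|w|^2)^{-1/2}(z_1,z_2,\rho w)$ for small $\rho>0$, and compute the composite $\Psi=\nu\circ e$, namely
\[
\Psi(z_1,w)=\frac{1}{\sqrt{1+\rho^2|w|^2}}\,(z_1,\,w^k,\,\rho w),\qquad |z_1|^2+|w|^{2k}=1.
\]
The isotopy is then the explicit family
\[
\Psi_s(z_1,w)=\frac{1}{\sqrt{|z_1|^2+(1-s)^2|w|^{2k}+\rho^2|w|^2}}\,(z_1,\,(1-s)\,w^k,\,\rho w),\qquad s\in[0,1],
\]
scaling away the middle ($w^k$) coordinate. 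At $s=1$ the image lies in the linear coordinate sphere $\{Z_2=0\}\cap S^5$, onto which $\Psi_1$ is a diffeomorphism (an injective immersion of the compact $\widetilde S$ into a connected $3$--manifold of the same dimension), and $\{Z_2=0\}\cap S^5$ is carried to the standard $\{Z_3=0\}\cap S^5$ by a unitary rotation of coordinates. Since $\mathrm{Diff}^+(S^3)$ is connected, this upgrades to an isotopy of the parametrized embedding to the standard one.

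The main work — and essentially the only place requiring care — is verifying that each $\Psi_s$ is genuinely an embedding. The endpoint $\Psi_0=\nu\circ e$ is an embedding by construction. For the rest, immersivity reduces to a direct computation which, using $k\ge 2$, shows that a tangent vector whose image is radial must vanish (the second coordinate produces the relation $c(k-1)w^k=0$, and the cases $w=0$ and $s=1$ are handled by the defining equation of $\widetilde S$); and injectivity after radial normalization follows from observing that if the unnormalized vectors $(z_1,(1-s)w^k,\rho w)$ at two points of $\widetilde S$ are positive scalar multiples of one another, then comparing $|z_1|^2+|w|^{2k}=1$ at the two points forces the scalar to be $1$ and the points to coincide. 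I expect this embedding check, rather than any conceptual point, to be the heart of the proof.

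Finally, I would remark that the same conclusion is visible through the open book viewpoint of Example~\ref{differentembeddings}: the cover has open book $(D^2,\mathrm{id})$, and the braiding embeds its page into the page $D^2_1\times D^2_2$ of $S^5$ as the graph $\{(w^k,w)\}$, which is isotoped to the standard page $D^2_1\times\{0\}$ through $\{(\,(1-s)w^k,\,w)\}$; because the monodromy is trivial, this page isotopy sweeps out the desired ambient isotopy. This gives an independent, more pictorial confirmation that the braided embedding is standard.
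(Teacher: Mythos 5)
Your explicit model and isotopy are correct, and the core construction closely parallels the paper's own: the paper realizes the braided sphere as $S_1=\{z_2=z_3^k\}\cap S^5_\epsilon$ and isotopes it through the family $S_t=\{z_2=tz_3^k\}\cap S^5_\epsilon$, invoking \cite[Lemma~2.12]{Milnor68} to know each $S_t$ is a transversely cut out embedded sphere, whereas you parametrize essentially the same interpolation (scaling the $w^k$--coordinate to zero) and verify immersivity and injectivity by hand. I checked your two verifications and they go through: the relation $c(k-1)(1-s)w^k=0$ does force $c=0$ away from the degenerate cases, and the tangency condition to $|z_1|^2+|w|^{2k}=1$ handles $w=0$ and $s=1$; the injectivity argument via comparing the defining equation at two points is also correct. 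The appeal to connectivity of $\mathrm{Diff}^+(S^3)$ (plus an ambient rotation to fix orientations) to pass from images to parametrized embeddings is legitimate, and in fact more careful than the paper on this point.

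There is, however, a genuine gap relative to the statement: the lemma takes as input an \emph{arbitrary} $k$--fold cyclic branched covering map $p\colon S^3\to S^3$ with branch locus the unknot, and asks for a function $h$ such that $x\mapsto (p(x),h(x))$ --- with that given $p$ in the first coordinate --- is a braided embedding isotopic to the standard embedding. You construct a single model pair, namely the covering $(z_1,w)\mapsto(z_1,w^k)$ on $\widetilde S$ together with $h=w$; nothing in your argument produces an $h$ for any other map satisfying the hypothesis, and such maps are far from unique (they differ by diffeomorphisms of source and target and by isotopies of the branch locus). This reduction is exactly what the first half of the paper's proof supplies: (i) an isotopy of the branch locus induces an isotopy of braided embeddings, and (ii) given two such covers $p$ and $p'$ with the same branch locus $U$, one first isotopes them to agree near $U$ and then uses the lifting criterion for covering maps on the solid-torus complement of $U$ to produce a diffeomorphism $f$ of $S^3$, fixing $U$ and isotopic to the identity, with $p=p'\circ f$; then $h'=h\circ f^{-1}$ is the required braiding function for $p'$, since $(p',h')=e\circ f^{-1}$ has the same image as $e$ and hence lies in the same isotopy class. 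Your model does suffice for the application in Theorem~\ref{embeds3s}, where only the existence of some braided embedding realizing the cyclic branched cover matters, but to prove Lemma~\ref{isotop} as stated you must add this independence-of-$p$ argument.
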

\begin{remark}
The braided embedding in Lemma~\ref{isotop}  when $k=2$ can be thought of as a simple case of a ``braid stabilization" in dimension 5.  The notion of such a stabilization has previously been announced by Mori. 
\end{remark}
\begin{proof}
Below we will construct the braided embedding using a specific choice for the branched covering map $p$ but since we claim the result is true for any such map we begin by observing that the braided embedding, up to isotopy, does not depend on the exact choice of $k$-fold branched cover. Specifically we show below that the braided embedding is unchanged, up to isotopy, if the branched locus is changed by a smooth isotopy. We then show that in our situation if $p'\colon S^3\to S^3$ is another $k$-fold cyclic branched covering map with the same branch locus as $p$ then there is an isotopy of $p$ to $p'$ and $h$ to $h'\colon S^3\to D^2$ giving an isotopy of braided embeddings. 

Addressing the first point, suppose we are given a braided embedding $e\colon M\to Y\times \C: x\mapsto (p(x),f(x))$, notice that if the branch locus $B$ is changed by an isotopy then there is an ambient isotopy of $Y$ that induces this isotopy and composing with $p$ gives a family of functions $p_t\colon M\to Y$ that will induce an isotopy of the embedding $M\to Y\times \C$. A similar argument allows us to isotope the branched locus in $M$ (though here we will also need to compose $h$ with the ambient isotopy to maintain a braided embedding). 

For the second point, suppose $p'$ and $p$ are two $k$-fold branched covering maps with branch locus the unknot $U$. Since any orientation preserving isotopy of $S^1$ is isotopic to the identity, we can isotopy $p$, through branched covering maps with branch locus $U$, to a map that agrees with $p'$ on $U$ and since the ramification data for $p$ and $p'$ is the same, by a further isotopy we can assume $p$ agrees with $p'$ in a neighborhood of $U$.  
Now consider $p$ and $p'$ on the complement of the branch loci where they are simply $k$-fold covering maps of the (open) solid torus $S^1\times \R^2$. Using the lifting criteria for covering maps we know there is a diffeomorphism $f\colon S^1\times \R^2\to S^1\times \R^2$ such that $p=p'\circ f$. Moreover by our prior isotopies we know $f$ is the identity map outside some compact set and hence is the identity on all of $S^1\times \R^2$. We can extend $f$ to a diffeomorphism of all of $S^3$ such that $p=p'\circ f$ and clearly this diffeomorphism is isotopic to the identity leaving the branch locus fixed. Thus we have constructed our isotopy of braided embeddings from $(p,h)$ to $(p',h')$. 

\begin{remark}
The braided embedding could depend on $h$. It would be interesting to find explicit non-isotopic braided embeddings realizing a fixed branched cover $p\colon M\to Y$. Is this possible when considering $k$-fold cyclic covers? It certainly is in dimension 3. What about higher dimensions?
\end{remark}

We are thus left to check the lemma is true for a specific choice of unknot and a specific choice of  $h\colon S^3\to \C$. To this end we consider $S^5_\epsilon=\{|z_1|^2+|z_2|^2+|z_3|^2=\epsilon^2\}$ for $\epsilon>0$ in $\C^3$ with coordinates $(z_1,z_2, z_3)$. We then consider the standard embedding of $S^3$ in $S^5$ to be given by $S^3_\epsilon=\{z_3=0\}\cap S^5_\epsilon$ and the unknot in $S^3_\epsilon$ as being given by $U=\{z_2=z_3=0\}\cap S^5_\epsilon$. Denote by  $U'=\{z_1=z_2=0\}$ the $S^1$ in $S^5_\epsilon$ that is complementary to $S^3_\epsilon$ (that is one can see $S^5_\epsilon$ as the join of $S^3_\epsilon$ and $U'$). Notice that $C=S^5_\epsilon-U'$ is diffeomorphic to $S^3_\epsilon\times \C$ by the diffeomorphism
\[
S^3_\epsilon \times \C \to C:((z_1,z_2), z_3)\mapsto \left(\frac{\epsilon z_1}{\sqrt{\epsilon^2+|z_3|^2}}, \frac{\epsilon z_2}{\sqrt{\epsilon^2+|z_3|^2}}, \frac{\epsilon z_3}{\sqrt{\epsilon^2+|z_3|^2}}\right).
\]
and the map 
\[
\pi\colon C\to S^3_\epsilon:(z_1,z_2,z_3)\mapsto \frac{\epsilon}{\sqrt{|z_1|^2+|z_2|^2}} (z_1,z_2)
\] 
is simply the projection map to $S^3$. 

Consider the complex polynomial $p_t(z_1,z_2,z_3)=z_2-t z_3^k$, where $t\in [0,1]$. 
Notice that for a sufficiently small fixed $\epsilon > 0$ the zero set intersected with $S^5_\epsilon$, which we denote by $S_t$, is a transversely cut out sphere in $S^5_\epsilon$ for all $t\in [0,1]$ by \cite[Lemma~2.12]{Milnor68}. 
Consider the map $p\colon S_1\to S^3_\epsilon$ obtained by restricting $\pi$ to $S_1$. We claim this is a $k$-fold covering map branched along $U$. To see this we first note that for each point $(z_1,z_2)\in S^3-U$ we have $z_2\not=0$ so there are precisely $k$ roots of $z_2$ and denoting one of these sheets by $\sqrt[k]{z_2}$ we see that the map $(z_1,z_2)\mapsto \frac{\epsilon}{\sqrt{|z_1|^2+|z_2|^2+|z_2|^{2/k}}}(z_1,z_2, \sqrt[k]{z_2})$ is a local section of  $p\colon S_1\to S^3_\epsilon$. Thus we see that $p$ is a $k$--fold covering map from $S_1-p^{-1}(U)$ to $S^3_\epsilon-U$. Moreover for any $(z_1,z_2)\in U$ we see that $z_2=0$ so there is a unique $k$-th root and the only point in $S_1$ lying above it is $(z_1,0,0)$. 

Thus we see that $S_1$ is a sphere that is braided about the standardly embedded $S^3$ in $S^5$ and realizing a $k$--fold cyclic branched cover over the unknot $U$. The spheres $S_t$ for $t\in[0,1]$ provide an isotopy from our braided sphere $S_1$ to the sphere $S_0=\{z_2=0\}$ which is clearly isotopic to the standardly embedded sphere $S^3_\epsilon$.
\end{proof}

We now turn to the proof of Theorem~\ref{allhaveembed} concerning the embeddings of overtwisted contact structures on 3--manifolds $M$ with no 2--torsion in their second cohomology.
\begin{proof}[Proof of Theorem~\ref{allhaveembed}]
The vanishing of the first Chern class is a necessary condition for contact embeddings of contact $3$-manifolds into $(S^5,\xi_{std})$ by 
Theorem~\ref{thm:obstruct}. From Proposition~\ref{oneembed} we know that every 3-manifold $M$ has some contact structure $\xi$ that embeds in $(S^5, \xi_{std})$. Now using Lemma~\ref{cconnectsum} we know that $\xi\# \xi_n$ embeds for all overtwisted contact structures $\xi_n$ on $S^3$. Using Proposition~\ref{gompfclass} and Theorem~\ref{otclass} we see from the fact that there is no 2--torsion in the second cohomology of $M$ that every overtwisted contact structure with trivial first Chern class on $M$ is of the form $\xi\#\xi_n$ for some $n$ and thus they all embed. 
\end{proof}

We now consider embedding tight contact structures on lens spaces into $(S^5,\xi_{std})$.
\begin{lem}\label{alltight}
A tight contact structure $\xi$ on a lens space $L(p,q)$ contact embeds in $(S^5,\xi_{std})$ if and only if $c_1(\xi)=0$. 
\end{lem}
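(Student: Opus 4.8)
The plan is to treat the two implications separately. The ``only if'' direction is immediate from the theory already in place: a contact embedding of $(L(p,q),\xi)$ into $(S^5,\xi_{std})$ is a co--dimension~$2$ contact embedding into a manifold with $H^2(S^5;\Z)=0$, so Theorem~\ref{thm:obstruct} forces $c_1(\xi)=0$. All of the work is in the converse.

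For the ``if'' direction I would realize each tight $\xi$ with $c_1(\xi)=0$ as the contact structure induced by a $2$--fold cyclic branched cover of $(S^3,\xi_{std})$ and then invoke Corollary~\ref{contctforcyclic} to braid it about the standard $S^3\subset S^5$. Recall (Schubert) that $L(p,q)$ is the double cover of $S^3$ branched over the two--bridge link $\B(p,q)$. A link in $S^3$ is automatically orientable, null--homologous, has trivial normal bundle, and bounds a connected Seifert surface, so the double branched cover is exactly the cyclic cover of Theorem~\ref{cyclicbraid}; hence once $\B(p,q)$ is realized as a transverse contact submanifold $T$ of $(S^3,\xi_{std})$, the hypotheses of Corollary~\ref{contctforcyclic} are met and the induced contact structure $\xi_T$ on $L(p,q)$ admits a braided contact embedding into $(S^5,\xi_{std})$. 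The entire problem therefore reduces to choosing $T$ so that $\xi_T$ is the prescribed tight structure.

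To produce the right $\xi_T$ I would use the open--book/branched--cover dictionary of Subsection~\ref{contactintro}: presenting $\B(p,q)$ as the closure of a braid $\beta$, the double cover of the disk page branched over the braid points is a surface $\Sigma$, and the lift $\widetilde\beta$ of the monodromy gives an open book $(\Sigma,\widetilde\beta)$ supporting $\xi_T$. The tight contact structures on $L(p,q)$ are exactly the Stein fillable ones, supported by open books whose monodromy is a product of right--handed Dehn twists, so the goal is to match $(\Sigma,\widetilde\beta)$ to these. Proposition~\ref{stabilize} is the organizing tool here: stabilizing $T$ connect--sums $\xi_T$ with $(S^3,\xi_1)$ and therefore only manufactures overtwisted structures, so the tight ones must come from transverse representatives at maximal self--linking. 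I would then match these maximal representatives against Honda's classification of tight structures on $L(p,q)$, computing $c_1(\xi_T)$ from the branch braid and checking that the constraint $c_1(\xi_T)=0$ selects precisely the ``balanced'' branch loci --- for $p$ odd, $\B(p,q)$ is a reversible knot, exactly the reversibility that was the crux of the obstruction in Example~\ref{nobraid}.

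The main obstacle is this final matching step. I expect the hard part to be showing that, as $T$ ranges over the maximal self--linking transverse representatives of $\B(p,q)$ (and over the various two--bridge presentations), the induced tight structures $\xi_T$ exhaust exactly the tight contact structures on $L(p,q)$ with $c_1=0$, and that each induced $\xi_T$ is genuinely tight (which does not follow from positivity of $\beta$ alone, since two--bridge links are generally not quasipositive). This requires combining Honda's count, the $c_1$ computation, and the transverse classification of two--bridge links. The case $p$ even needs extra care: then $\B(p,q)$ has two components and $H^2(L(p,q))$ carries $2$--torsion, so one must verify that the cyclic double cover of Corollary~\ref{contctforcyclic} (each meridian mapping to the generator of $\Z/2$) is indeed $L(p,q)$ and that the two boundary components of $\Sigma$ are handled symmetrically. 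Once all $c_1=0$ tight structures are realized as branched covers, Corollary~\ref{contctforcyclic} supplies the braided contact embeddings and finishes the proof.
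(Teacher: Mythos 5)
Your skeleton agrees with the paper's: the ``only if'' direction is exactly Kasuya's obstruction (Theorem~\ref{thm:obstruct}), and the ``if'' direction is to exhibit each tight $\xi$ with $c_1(\xi)=0$ as a $2$--fold cyclic branched cover of $(S^3,\xi_{std})$ along a transverse link and then apply Corollary~\ref{contctforcyclic}. But the step you label ``the main obstacle'' is not a loose end to be tidied up --- it is the entire content of the lemma, and the route you propose for it does not go through with the tools at hand. Starting from the two--bridge branch locus and searching among its transverse representatives runs into two problems. First, transverse representatives of two--bridge links are not classified; the uniqueness result quoted in the paper (\cite{Eliashberg93}) covers only the unknot, so ``maximal self--linking representative'' is not even a well-defined finite list you can match against Honda's count. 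Second, and more fundamentally, nothing available certifies that the cover branched over such a representative is \emph{tight}: Proposition~\ref{stabilize} only says that stabilizing the branch locus makes the cover overtwisted, with no converse, and (as you yourself note) two--bridge links are generally not quasipositive, so no positivity/fillability argument applies to the lifted open book. Your plan therefore reduces the lemma to statements that are open.

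The paper escapes this by running the construction in the opposite direction, so that tightness is built in rather than verified. It first shows (using the Honda--Giroux classification, Theorem~\ref{buildstein}, Lisca--Mati\'c \cite{LiscaMatic97}, and the fact that tight structures on lens spaces lie in distinct homotopy classes of plane fields) that a tight structure with $c_1=0$ on $L(p,q)$, when it exists, is the unique one coming from Legendrian surgery on a chain of unknots all with rotation number $0$, and it realizes this structure by an explicit open book with right--handed Dehn twists along the curves $\gamma_i$ of Figure~\ref{planarforr0}. It then stabilizes this open book to a symmetric one (Figure~\ref{bcoverforr0}) admitting an involution whose quotient is an open book $(F,\psi)$ supporting $(S^3,\xi_{std})$; the two fixed points trace out a transverse link $T$, and inserting half twists along the arcs $c_i$ accounts for the Dehn twists about the $\gamma_i$ upstairs. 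By construction the $2$--fold cover of $(S^3,\xi_{std})$ branched along $T$ \emph{is} the given tight structure --- no transverse classification and no tightness check is ever needed --- and Corollary~\ref{contctforcyclic} then embeds it in $(S^5,\xi_{std})$ exactly as you intended. That quotient-of-a-symmetric-open-book construction is the idea missing from your proposal.
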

\begin{proof}
We begin by recalling the classification of tight contact structures on $L(p,q)$. Given $p>q>1$ consider the continued fraction expansion of $-p/q$:
\[
-p/q=a_1-\frac{1}{a_2-\frac{1}{\ldots - \frac{1}{a_n}}},
\]
where each $a_i\leq -2$. It is well known that $L(p,q)$ is obtained from surgery on the link on the left in Figure~\ref{fig:lpq}. 
\begin{figure}[htb]
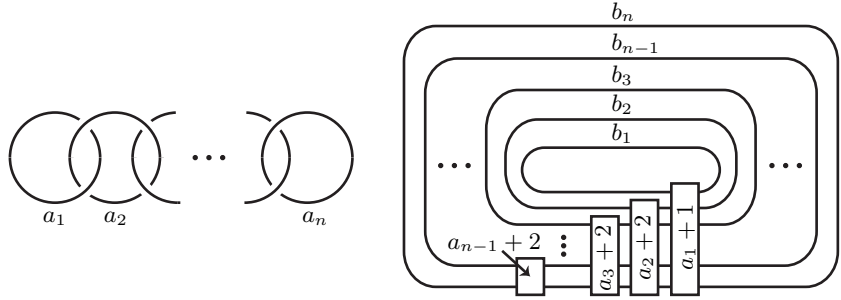
{\small
\begin{overpic}
{Lpq}
\put(255,10){\rotatebox{90}{$a_1+1$}}
\put(239,6){\rotatebox{90}{$a_2+2$}}
\put(224,2){\rotatebox{90}{$a_3+2$}}
\put(168,18){$a_{n-1}+2$}
\put(230,59){$b_1$}
\put(230,70){$b_2$}
\put(230,81){$b_3$}
\put(230,93){$b_{n-1}$}
\put(230,105){$b_{n}$}
\put(15,28){$a_1$}
\put(38,28){$a_2$}
\put(113,28){$a_n$}
\end{overpic}}
\caption{On the left is a surgery picture for $L(p,q)$ in terms of a continued fractions expansion for $-p/q$. On the right is another surgery picture for $L(p,q)$.}
\label{fig:lpq}
\end{figure}
Honda \cite{Honda00a} and Giroux \cite{Giroux00} proved that there is a one-to-one correspondence between tight contact structures on $L(p,q)$ and the contact structures obtained from Legendrian surgery on all possible Legendrian realizations of the link on the left hand side of Figure~\ref{fig:lpq}. 

We claim that on each $L(p,q)$ there is exactly 0 or 1 tight contact structure with $c_1=0$, and if it exists, it comes from Legendrian surgery on a Legendrian realization of the link in Figure~\ref{fig:lpq} with all components having rotation number 0. This will follow if we can see that the only contact structure obtained from Legendrian surgery with $c_1=0$ is the one on a link that has all rotation numbers 0. This statement is almost contained in  \cite[Proposition~1.7]{Giroux00}, but to establish it we argue as follows. 

Arguing by contradiction we assume that there is a Legendrian realization of the link in Figure~\ref{fig:lpq} with some rotation numbers non-zero and the contact structure $\xi$ on $L(p,q)$ obtained by surgery on this link has $c_1(\xi)=0$.  By Theorem~\ref{buildstein} we have a Stein domain $X$ with boundary $L(p,q)$ and inducing the contact structure $\xi$. Moreover $c_1(X)=\sum_{i=1}^k r(L_i) h_i$ which is non-zero. Now  \cite[Corollary~4.10]{Gompf98} says that an oriented plane field is homotopic to itself with reversed orientation if and only if its first Chern class is 0. So $\xi$ is homotopic, as a plane field, to $-\xi$. Notice if $J$ is the complex structure on $X$ then $\xi$ is the set of $J$-complex tangencies to $\partial X$ and $-\xi$ is the set of $\overline{J}$-complex tangencies (where $\overline{J}$ is the conjugate complex structure on $X$). Notice that $c_1(\overline{J})=- c_1(J)\not = c_1(J)$, since the cohomology of $X$ is free and $c_1(J)\not=0$, and thus $\xi$ and $-\xi$ are not isotopic as contact structures due to a result of Lisca and Matic \cite[Theorem1.2]{LiscaMatic97}. So we have found two contact structures in the same homotopy class of plane field, but this contradicts \cite[Theorem~1.1]{Giroux00} and \cite[Proposition~4.24]{Honda00a} which says that the tight contact structures on lens spaces are all in distinct homotopy classes of plane fields. Thus our assumption must have been false. 

We now note that the surgery diagram on the left of Figure~\ref{fig:lpq} can be transformed by simple handle slides to the ``rolled up" diagram on the right. In the figure the surgery coefficients are
\[
b_k=2(k-1)+ \sum_{i=1}^k a_i.
\]
Notice that the surgery coefficients are decreasing moving from the inside circle out. One may choose a Legendrian realization of the innermost circle with $tb=b_1+1$, then take a push-off of it and stabilize it enough times to get a Legendrian with $tb=b_2+1$ and continue until we have a Legendrian link on which Legendrian surgery will yield $L(p,q)$. One may check that all the tight contact structures on $L(p,q)$ may be obtained this way (see for example \cite{EtnyreOzbagci06}). Thus the contact structures with $c_1=0$ exist only on lens spaces where all the $a_i$ are even and a Legendrian surgery picture of them only have $r=0$ Legendrian unknots and so is of the form shown in Figure~\ref{fig:legsurg}. 
\begin{figure}[htb]
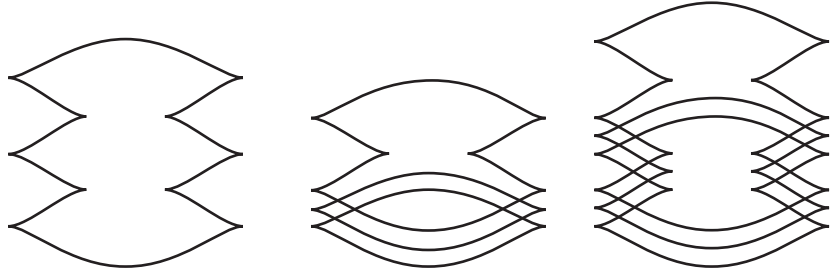
{\small
\begin{overpic}
{Legsurg}
\end{overpic}}
\caption{Legendrian surgery diagrams for the $c_1=0$ tight contact structures on $L(6,1)$, $L(10,7)$, and $L(24,7)$.}
\label{fig:legsurg}
\end{figure}

We indicate how to put these Legendrian knots on the page of an open book supporting the standard tight contact structure on $S^3$, for more details see \cite{Etnyre04b, Etnyre06}. Figure~\ref{planarforr0} shows a planar surface $\Sigma$ with 8 boundary components.
\begin{figure}[htb]
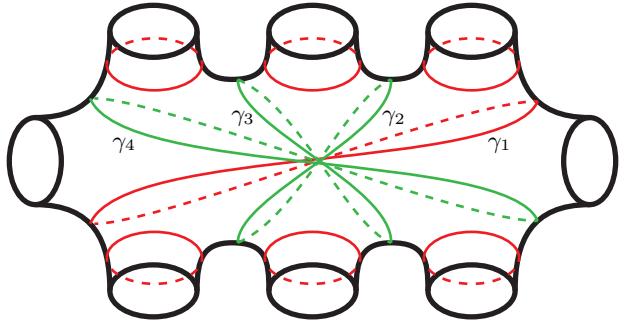
{\small
\begin{overpic}
{planar}
\put(40,65){$\gamma_4$}
\put(85,75){$\gamma_3$}
\put(142,75){$\gamma_2$}
\put(182,65){$\gamma_1$}
\end{overpic}}
\caption{Open book for $S^3$ and Legendrian unknots with rotation 0.}
\label{planarforr0}
\end{figure}
If $\phi$ is the composition of positive Dehn twists about the red curves then $(\Sigma,\phi)$ supports $\xi_{std}$ on $S^3$ (note it is clear that this open book is a stabilization of the annular open book for $S^3$). Using Giroux's Legendrian realization principle the curves $\gamma_i$ can each be realized by Legendrian knots on the page of the open book. (Since pages have transverse boundary, this is not a standard application of the realization principle, but can nonetheless be done, see the proof of Theorem~5.11 in \cite{Etnyre06}.) Notice that $(\Sigma,\phi)$ is obtained from the annular open book supporting $(S^3,\xi_{std})$ (which we think of as a neighborhood of $\gamma_1$) by stabilizing the open book three times on each boundary component of the annulus. Now $\gamma_2$ is obtained from $\gamma_1$ by sliding over one of the open book stabilizations applied to each boundary component of the annulus. It is known, see \cite[Lemma~3.3]{Etnyre04b}, that this implies that the Legendrian realization of $\gamma_2$ is obtained from a copy of the Legendrian realization of $\gamma_1$ by a positive and a negative stabilization. Similarly, $\gamma_i$ is a push-off of $\gamma_{i-1}$ followed by a positive and a negative stabilization. Thus since the Legendrian realization of the core curve in the annular open book supporting $(S^3,\xi_{std})$ represents the Legendrian unknot with $tb=-1$ and $r=0$, it is clear that the $\gamma_i$ realize Legendrian unknots with $tb=-2i+1$ and $r=0$.  For any positive integer $k$ there are clearly analogous pictures on which we can realize all Legendrian unknots with rotation 0 and Thurston-Bennequin invariant odd integers between $-1$ and $-2k+1$. Performing Legendrian surgeries on the $\gamma_i$ is equivalent to adding right handed Dehn twists to the monodromy along the corresponding curve. Thus it is clear all the tight contact structures on lens spaces with $c_1=0$ can be realized by open books analogous to the one shown in Figure~\ref{planarforr0}.

We can stabilize the open book $(\Sigma,\phi)$ to get the open book shown in Figure~\ref{genusforr0} on which we still see the Legendrian knots $\gamma_i$ and can still add Dehn twists to them in order to realize all tight contact structures on $L(p,q)$ with trivial first Chern class. We call this surface $\Sigma'$ and the new monodromy, which is a composition of right handed Dehn twist about the red curves from Figure~\ref{genusforr0}, $\phi'$. 
\begin{figure}[htb]
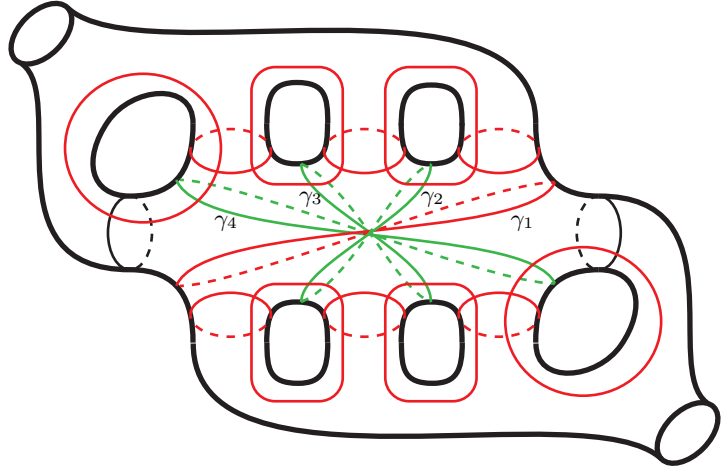
{\small
\begin{overpic}
{nonplanar}
\put(190,91){$\gamma_1$}
\put(156,100){$\gamma_2$}
\put(110,100){$\gamma_3$}
\put(78,91){$\gamma_4$}
\end{overpic}}
\caption{Stabilization of $(\Sigma,\phi)$.}
\label{genusforr0}
\end{figure}
The top picture in Figure~\ref{bcoverforr0} is a symmetric version of $\Sigma'$ from Figure~\ref{genusforr0} (notice that we have put a half twist about the ``waist" of the surface). 
\begin{figure}[htb]
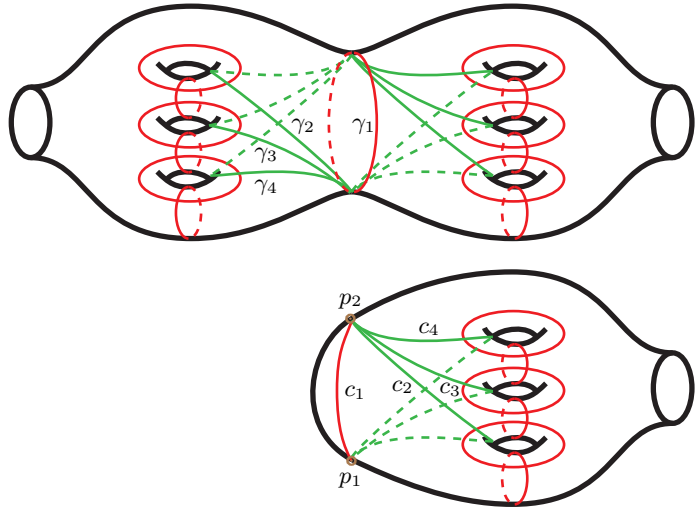
{\small
\begin{overpic}
{bcover}
\put(93,120){$\gamma_4$}
\put(93,132){$\gamma_3$}
\put(107,143){$\gamma_2$}
\put(130,143){$\gamma_1$}
\put(155,66){$c_4$}
\put(163,43){$c_3$}
\put(145,44){$c_2$}
\put(127,42){$c_1$}
\put(125,77){$p_2$}
\put(125,9){$p_1$}
\end{overpic}}
\caption{On the top is an open book for $S^3$ on which one can see Legendrian unknots with rotation 0. On the bottom is the quotient of the top surface by the obvious involution.}
\label{bcoverforr0}
\end{figure}
There is an involution of the surface on the top of the figure given by rotation by $\pi$ around a vertical line piercing the center of the surface. Quotienting by this action yields the surface $F$ shown on the bottom of Figure~\ref{bcoverforr0}. Clearly $\Sigma'$ is the 2-fold branched cover of $F$ branched along the two brown points $\{p_1,p_2\}$ shown in the figure. Let $\psi$ be the composition of a right handed Dehn twist about each red simple closed curve in the bottom picture in Figure~\ref{bcoverforr0} (compose the Dehn twists starting from the bottom curve and working up the chain). The open book $(F,\psi)$ is obtained from the open book $(D^2,id_{D^2})$ by a sequence of stabilizations and hence supports the standard tight contact structure on $S^3$. The two branch points  trace out a two component transverse link  $T$ in the open book (that is $\{p_1,p_2\}\times [0,1]/\sim$ in the mapping torus part of the open book). The two fold branched cover of $(S^3,\xi_{std})$ over this link will result in the contact structure supported by the open book $(\Sigma,\phi'')$ where $\phi''$ is $\phi'$ except the Dehn twist about $\gamma_1$ is not used. If the link $T$ is changed by adding a half twist about one of the arcs $c_i$ in $F$ shown in Figure~\ref{bcoverforr0}, then the monodromy of the branched cover changes by adding a right handed Dehn twist about the corresponding $\gamma_i$. Thus we see there is a transverse link in $(S^3,\xi_{std})$ for which we can take the two fold cover of $S^3$ branched along this link to obtain any tight contact structure on a lens space with $c_1=0$. Now by Corollary~\ref{contctforcyclic} we see that all these tight contact structures contact embed in $(S^5,\xi_{std})$.
\end{proof}

We end the paper with a proof of Theorem~\ref{embedall} that establishes the embeddability into $(S^5,\xi_{std})$ of contact structures with vanishing first Chern class on $S^1\times S^2$, $T^3$, and some lens spaces.  
\begin{proof}[Proof of Theorem~\ref{embedall}]
There is a unique tight contact structure $\xi_t$ on $S^1\times S^2$ that is supported by the open book with annulus page and identity monodromy. Thus it is easy to see it is obtained as the double cover of $(S^3,\xi_{std})$ branched along the two component unlink with both components being transverse knots of self-linking $-1$. Now Corollary~\ref{contctforcyclic} allows us to embed $\xi_t$ into $(S^5,\xi_{std})$. Since there is no 2--torsion in the homology of $S^1\times S^2$ we see from Theorem~\ref{allhaveembed} that all overtwisted contact structures with $c_1=0$ also embed. 

Similarly for $T^3$ we see that all overtwisted contact structures with $c_1=0$ embed in $(S^5,\xi_{std})$. A complete list of tight contact structures on $T^3$ is given by
\[
\xi^{T^3}_n=\ker (\cos 2\pi n z\, dx + \sin 2\pi n z\, dy),
\]
where $T^3$ is thought of as $[0,1]^3$ with opposite sides identified by translation and $n$ is a positive integer, see \cite{Kanda97}.  One my check that $c_1(\xi^{T^3}_n)=0$ for all $n$.  It is easy to see that $\xi^{T^3}_n$ is an $n$--fold (ordinary) cyclic cover of $\xi^{T^3}_1$ where it is the $z$-coordinate that is unwrapped $n$ times. In addition, one may check that $\xi^{T^3}_1$ is the contact structure induced on the boundary of the unit cotangent bundle $T^*T^2$ by the Liouville form (or consult \cite{Kanda97}). We notice that if $h\colon T^3\to S^1$ is projection onto the $z$-coordinate thought of as the unit circle in $\C$ then the proof of Theorem~\ref{cyclicbraid} gives a braided embedding of the $n$--fold (ordinary) cover of $T^3$ into $T^3\times D^2$ and since there is no branch locus to worry about Theorem~\ref{MainContactBraid} clearly gives a contact embedding of $(T^3,\xi^{T^3}_n)$ into $(T^3\times D^2, \ker (\alpha_1+r^2\, d\theta))$, where $\alpha_1$ is the contact form for $\xi^{T^3}_1$. Thus if we can embed $(T^3,\xi^{T^3}_1)$ into $(S^5,\xi_{std})$ then we will have an embedding of all tight contact structures on $T^3$. 

Recall there are many embeddings of a Legendrian $T^2$ into $(S^5,\xi_{std})$. They can be constructed in various ways, for example using front projections, see \cite{EkholmEtnyreSullivan05a}. 
By the neighborhood theorem for Legendrian submanifolds, see \cite[Theorem~2.5.8]{Geiges08}, a Legendrian $T^2$ has a neighborhood contactomorphic to a neighborhood of the zero section in the 1-jet space $T^*T^2\times \R$ with the contact structure $\ker(dz-\lambda)$, where $\lambda$ is the Liouville 1--form on $T^*T^2$ and $z$ is the coordinate on $\R$. Let $S_\epsilon$ be the $\epsilon$-sphere bundle in $T^*T^2$. As mentioned above $\lambda$ restricted to $S_\epsilon$ is a contact 1--form defining $\xi_1^{T^3}$ and thus $(T^3,\xi_1^{T^3})$ contact embeds in $(S^5,\xi_{std})$. 
\begin{remark}
Once can also embed $(T^3,\xi^{T^3}_1)$ into $(S^5,\xi_{std})$ explicitly as 
\[
\{(z_0,z_1,z_2): |z_0|^2+|z_1|^2+|z_2|^2=r, z_0z_1z_2=1\}
\]
for sufficiently large $r>0$. To see that this is indeed the claimed contact manifold one notes that this set is the boundary of the Stein domain $\{(z_0,z_1,z_2): |z_0|^2+|z_1|^2+|z_2|^2\leq r, z_0z_1z_2=1\}$ which is diffeomorphic to  $D^2\times T^2$. 
\end{remark}

Turning to contact structures on lens spaces $L(p,q)$, the theorem follows when $p$ is odd from Theorem~\ref{allhaveembed} and Lemma~\ref{alltight}. We cannot use Theorem~\ref{allhaveembed} to embed all the overtwisted contact structures on $L(p,q)$ when $p$ is even since the 2-torsion in the first homology group means that $c_1$ and $d_3$ do not determine the isotopy class of an overtwisted contact structure, see Section~\ref{htpyclasses}.

We now consider the case of contact structures on $L(p,q)$ when $p$ is even and $q=1$. From Lemma~\ref{alltight} we know the tight contact structure on $L(p,1)$ with $c_1=0$ embeds and by connect summing with the overtwisted contact structures on $S^3$ we see that all overtwisted contact structures with the same $\Gamma$ invariant (see Section~\ref{htpyclasses}) will also embed. So we are left to see that we can embed one overtwisted contact structure with $c_1=0$ and different $\Gamma$ invariant (recall there are only two possible $\Gamma$ invariants on $L(p,q)$ for a given Chern class). Then by connect summing with the overtwisted contact structures on $S^3$ we will have embedded all contact structures on $L(p,1)$ with $c_1=0$. 

To this end consider the surgery pictures for $L(p,1)$ given in Figure~\ref{lp1}. 
\begin{figure}[htb]
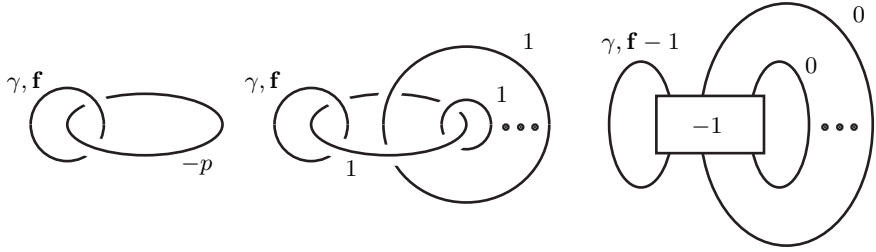
{\small
\begin{overpic}
{lpp1}
\put(61,29){$-p$}
\put(-5,60){$\gamma,  \mathbf{f}$}
\put(123,27){$1$}
\put(180,55){$1$}
\put(190,75){$1$}
\put(85,60){$\gamma,  \mathbf{f}$}
\put(297,66){$0$}
\put(315,85){$0$}
\put(254,43){$-1$}
\put(220,75){$\gamma,  \mathbf{f}-1$}
\end{overpic}}
\caption{On the left is the standard surgery picture for $L(p,1)$ with the generator for homology $\gamma$ and framing $\mathbf{f}$ on $\gamma$ shown. In the middle the same figure after blowing up $p+1$ curves. On the right the same figure after blowing down the horizontal $1$-framed unknot.}
\label{lp1}
\end{figure}
The left hand surgery diagram can be written as Legendrian surgery on a Legendrian unknot of the type shown on the left in Figure~\ref{fig:legsurg}. We have seen that we can embed this tight contact structure $\xi$ in $(S^5,\xi_{std})$. The surgery diagram describes a 4--manifold with a unique spin structure on it and this spin structure induces the spin structure $\mathbf{s}$ on $L(p,1)$ that extends over a 2--handle attached to $\gamma$ with even framing. Thus it corresponds to the empty characteristic sub-link $L'$. From this Equation~\eqref{gammainvt} tells us that $\Gamma_{\xi}(\mathbf{s})=0$. 

Now consider the surgery picture on the right in Figure~\ref{lp1}. This can be realized as $(+1)$-contact surgery on $p+1$ copies of the Legendrian unknot with $tb=-1$ and gives a contact structure $\xi'$ on $L(p,1)$ that can be embedded in $(S^5,\xi_{std})$. (This is clear since $\xi'$ is supported by the open book with annular page and monodromy the $p^\text{th}$ power of the left handed Dehn twist about the core of the annulus. And this open book can clearly be realized as a 2-fold branched cyclic cover.) The spin structure $\mathbf{s'}$ corresponding to the empty characteristic sub-link of this surgery diagram will extend over a 2--handle attached to $\gamma$ with even framing. If Equation~\eqref{gammainvt} held for general contact surgeries then we could conclude that $\Gamma_{\xi'}(\mathbf{s'})=0$. We believe that Equation~\eqref{gammainvt} does indeed hold for general contact surgeries, but as a proof does not exist in the literature we provide a different argument for the computation of $\Gamma_{\xi'}$ below, but first notice that the computation shows $\xi'$ and $\xi$ have different $\Gamma$ invariant. By tracking the framings on $\gamma$ through the surgery pictures in Figure~\ref{lp1} we see that $\mathbf{s}$ and $\mathbf{s'}$ are distinct spin structures on $L(p,1)$ and thus $\Gamma_{\xi'}(\mathbf{s})=\frac{p}{2} [\gamma]$ where $[\gamma]$ is the homology class of $\gamma$. In particular $\xi'$ has different $\Gamma$ invariant than the tight contact structure $\xi$. 

We now rigorously establish that $\Gamma_{\xi'}(\mathbf{s'})=0$. To this end recall that for any open book $(\Sigma, \phi)$ there is an orientation reversing diffeomorphism $\Psi\colon M_{(\Sigma,\phi)}\to M_{(\Sigma,\phi^{-1})}$ given on the mapping cylinder $T_\phi$ by $(p,t)\mapsto (p,1-t)$ and extended to the neighborhoods of the binding in the obvious way. For simplicity we now homotope the contact structure $\xi_{(\Sigma,\phi)}$ to the plane field $\widehat\xi_\phi$ that is given by the tangents to the pages on $T_\phi$ and by the usual formula on the neighborhoods of the binding. (More specifically, each binding component has a neighborhood $S^1\times D^2$ with coordinates $(\phi,(r,\theta))$, where $D^2$ is a disk of radius $\epsilon$, and the plane field will be given by the kernel of $g(r)\, d\phi+f(r)\, d\theta$ for functions $g$ and $f$ that are equal to $1$ and $r^2$, respectively, near $r=0$, $0$ and $1$, respectively, near $r=\epsilon$, and satisfy $f'g-g'f\geq 0$.) We similarly have $\widehat\xi_{\phi^{-1}}$. One may easily see that as oriented plane fields $\Psi_*(\widehat{\xi}_\phi)$ and $\widehat\xi_{\phi^{-1}}$ agree outside a neighborhood of the binding and differ from one another by a half-Lutz twist along the binding. Since the binding is null-homologous the 2-dimensional difference class between  $\Psi_*(\widehat{\xi}_\phi)$ and $\widehat\xi_{\phi^{-1}}$  is 0, in other words they are homotopic over the 2-skeleton and thus have the same $\Gamma$ invariant. Returning to our situation let $(\Sigma,\phi)$ be the open book for $\xi'$ described above. Now $(\Sigma,\phi^{-1})$ supports the contact structure obtained from Legendrian surgery on $p-1$ parallel copies of the maximum Thurston-Bennequin invariant unknot. Thus $\Gamma_{\widehat{\xi}_{\phi^{-1}}}$ can be computed from Equation~\eqref{gammainvt} to be $\Gamma_{\widehat{\xi}_{\phi^{-1}}}(\mathbf{s'})=0$ where $\mathbf s'$ is as above (that is the spin structure that extends over a 2--handle attached along $\gamma$ with even framing). According to \cite[Corollary~4.9]{Gompf98} $\Gamma$ changes sign when the orientation on the ambient manifold is reversed and is preserved under orientation preserving diffeomorphisms. Thus if $\Psi$ is the diffeomorphisms above then $\Gamma_{\xi'}(\Psi^{-1}_*({\mathbf s'}))=\Gamma_{\widehat\xi_\phi}(\Psi^{-1}_*({\mathbf s'}))=-\Gamma_{\widehat\xi_{\phi^{-1}}}(\mathbf{s'})=0$. Moreover notice that $\gamma$ sits on a page of the open book and is taken to a curve on a page of the open book by $\Psi$ and so the framing given by the pages is clearly preserved. Thus $\Psi^{-1}_*({\mathbf s'})$ is the spin structure called $\mathbf s'$ from the previous paragraph and our computation is complete. 

Now turning to $L(p,p-1)$. Notice that $L(p,p-1)$ is simply $L(p,1)$ with the reversed orientation. So taking the open books $(\Sigma, \phi)$ and $(\Sigma', \psi)$ for $\xi$ and $\xi'$, respectively, on $L(p,1)$ above we can consider $(\Sigma, \phi^{-1})$ and $(\Sigma', \psi^{-1})$. These are open books for $L(p,p-1)=-L(p,1)$. Because the contact structures supported by  $(\Sigma, \phi)$ and $(\Sigma', \psi)$ are not homotopic on the 2-skeleton of $L(p,1)$ (since their $\Gamma$ invariants are distinct) neither will the contact structures supported by  $(\Sigma, \phi^{-1})$ and $(\Sigma', \psi^{-1})$ on $L(p,p-1)$. (This should be clear from our discussion in the previous paragraph.) Thus we can embed into $(S^5,\xi_{std})$ two contact structures on $L(p,p-1)$ with different $\Gamma$ invariants and as discussed above this is enough to embed all overtwisted contact structures. So combined with Lemma~\ref{alltight} we have completed the proof of the theorem in the case of $L(p,p-1)$. 
\end{proof}

\def\cprime{$'$} \def\cprime{$'$}

\end{document}